\documentclass{article}
\usepackage[final,nonatbib]{nips_2016}
\usepackage{amsmath,amssymb}
\usepackage{color}
\usepackage{caption}
\usepackage{subcaption}
\usepackage{footnote}
\makesavenoteenv{tabular}
\makesavenoteenv{table}
\usepackage{mathrsfs}
\usepackage{enumerate}
\usepackage{graphicx}
\usepackage{algorithm}
\usepackage{algpseudocode}

\usepackage[utf8]{inputenc} 
\usepackage[T1]{fontenc}    
\usepackage{url}            
\usepackage{booktabs}       
\usepackage{amsfonts}       
\usepackage{nicefrac}       
\usepackage{microtype}      

\newtheorem{theorem}{Theorem}[section]
\newtheorem{lemma}[theorem]{Lemma}
\newtheorem{corollary}[theorem]{Corollary}
\newtheorem{proposition}[theorem]{Proposition}

\newcommand{\citet}{\cite}

\newcommand{\lb}{\left(}
\newcommand{\rb}{\right)}
\newcommand{\eps}{\epsilon}

\newcommand{\td}{\tilde}
\newcommand{\E}{\mathbb{E}}
\newcommand{\R}{\mathbb{R}}
\newcommand{\I}{\mathcal{I}}

\newcommand{\pd}[2]{\frac{\partial #1}{\partial #2}}

\newcommand{\breg}[3]{#1(#2) - #1(#3) - \langle\nabla #1(#3), #2 - #3\rangle}
\newcommand{\ik}[3]{\langle\nabla #1(#3), #2 - #3 \rangle}

\newcommand{\la}{\langle}
\newcommand{\ra}{\rangle}
\renewcommand{\H}{\mathcal{H}}
\def \endprf{\hfill {\vrule height6pt width6pt depth0pt}\medskip}
\newenvironment{proof}{\noindent {\bf Proof} }{\endprf\par}

\newcommand{\xj}{x^{(j)}}
\newcommand{\yj}{y^{(j)}}
\newcommand{\nuj}{\nu^{(j)}}

\newcommand{\zetaj}{\zeta^{(j)}}

\newcommand{\etaj}{\eta}
\newcommand{\Bj}{B}

\newcommand{\ej}{e_{j}}
\newcommand{\Ej}{\E_{j}}
\newcommand{\Ij}{\I_{j}}
\newcommand{\Nj}{N_{j}}

\newcommand{\comp}{\mathrm{Comp}}

\DeclareMathOperator*{\argmin}{arg\,min}

\DeclareMathOperator*{\Var}{\ensuremath{Var}}
\DeclareMathOperator*{\Cov}{\ensuremath{Cov}}

\allowdisplaybreaks
\title{Less than a Single Pass: Stochastically Controlled Stochastic Gradient \thanks{We correct mistakes in the earlier version of the paper; See footnote 4 (p.~\pageref{algo:SCSG}) and footnote 18 (p.~\pageref{sec:ana_SCSG}), both in red,  for details.}}

%

\author{
  Lihua Lei \\
  Department of Statistics\\
  University of California, Berkeley\\
  Berkeley, CA 94704 \\
  \texttt{lihua.lei@berkeley.edu} \\
  \And
  Michael I. Jordan \\
  Computer Science Division \& Department of Statistics \\
  University of California, Berkeley\\
  Berkeley, CA 94704\\
  \texttt{jordan@stat.berkeley.edu} \\
}

\begin{document}

\maketitle


\begin{abstract}
We develop and analyze a procedure for gradient-based optimization that we refer to as \emph{stochastically controlled stochastic gradient} (SCSG).  As a member of the SVRG family of algorithms, SCSG makes use of gradient estimates at two scales and the number of updates is governed by a geometric random variable. Unlike most existing algorithms in this family, both the computation cost and the communication cost of SCSG do not necessarily scale linearly with the sample size $n$; indeed, these costs are independent of $n$ when the target accuracy is low. The experimental evaluation on real datasets confirms the effectiveness of SCSG.
\end{abstract}

\section{Introduction}
Optimizing the finite-sum convex objectives is ubiquitous in different areas: 
\begin{equation}
  \label{eq:obj_generic}
  \min_{x\in \R^{d}}f(x) = \frac{1}{n}\sum_{i=1}^{n}f_{i}(x),
\end{equation}
where  each $f_{i}(x)$ is a convex function. These problems are often solved by algorithms
that either make use of full gradients (obtained by processing the entire 
dataset) or stochastic gradients (obtained by processing single data points or mini-batches of data points).  The use of the former provides guarantees of eventual convergence and the latter yields advantages in terms of rate of convergence rate, scalability and simplicity of implementation~\cite{Hazan07, Nemirovsky09, Rakhlin12}.  An impactful recent line of research has shown that a hybrid methodology that makes use of both full gradients and stochastic gradients can obtain the best of both worlds---guaranteed convergence at favorable rates, e.g.~\cite{katyusha, SAGA, SVRG, Catalyst, SDCA}.  The full gradients provide variance control for the stochastic gradients.

While this line of research represents significant progress towards the goal of designing scalable, autonomous learning algorithms, there remain some inefficiencies in terms of computation. With the definition of computation and communication cost in Section 2.1, the methods referred to above require $O(n\cdot C(\eps, d))$ computation to achieve an $\eps$-approximate solution, where $n$ is the number of data points, $\eps$ is a target accuracy and $d$ is the dimension of the parameter vector.  Some methods incur a $O(nd)$ storage cost~\cite{SAGA, SAG}. The linear dependence on $n$ is problematic in general.  Clearly there will be situations in which accurate solutions can be obtained with less than a single pass through the data; indeed, some problems will require a constant number of steps.  This will be the case, for example, if the data in a regression problem consist of a fixed number of pairs repeated a large number of times. For deterministic algorithms, the worst case analysis in \cite{Agarwal14} shows that scanning at least a fixed proportion of the data is necessary;  however, learning algorithms are generally stochastic and real-world learning problems are generally not worst case. 

An equally important bottleneck for learning algorithms is the cost of communication. For large data sets that must be stored on disk or distributed across many computing nodes, the communication cost can be significant, even dominating the computation cost. For example, SVRG makes use of full gradient over the whole dataset which can incur prohibitive communication cost. There is an active line of research that focuses on communication costs; see, e.g.~\cite{arjevani15, CoCoA, konecny15, zhang12}.

In this article, we present a variant of the stochastic variance reduced gradient (SVRG) method that we refer to as \emph{stochastically controlled stochastic gradient} (SCSG). The basic idea behind SCSG---that of approximating the full gradient in SVRG via a subsample---has been explored by others, but we present several innovations that yield significant improvements both in theory and in practice. In contradistinction to SVRG, the theoretical convergence rate of SCSG has a sublinear regime in terms of both computation and communication.  This regime is important in machine learning problems, notably in the common situation in which the sample size is large, ($n \in [10^{4},10^{9}]$), while the required accuracy is low, $\eps\in [10^{-4},10^{-2}]$. The analysis in this article shows that SCSG is able to achieve the target accuracy in this regime with potentially less than a single pass through the data. 

In the regime of low accuracy, SCSG is never worse than the classical \emph{stochastic gradient descent} (SGD). Although SCSG has the same dependence on the target accuracy as SGD, it has a potentially much smaller factor. In fact, the theoretical complexity of SGD depends on the uniform bound of $\nabla f_{i}(x)$ over the domain and the component index. This might be infinite even in the most common least square problems. By contrast, the complexity of SCSG depends on a new measure $\H(f)$, defined in Section \ref{sec:notation} and discussed in Section \ref{sec:Hf}, which is finite and small for a large class of practical problems. In particular, $\H(f) = O(1)$ in many cases where SGD does not have theoretical guarantees to converge. The measure $\H(f)$ sheds light upon characterizing the difficulty of optimization problems in the form of a finite sum and reveals some intrinsic difference between finite-sum optimization and stochastic approximation, which is considered by other relevant works; e.g., streaming SVRG \cite{SSVRG} and dynaSAGA\cite{daneshmand16}.

The remainder of the paper is organized as follows.  In Section 2, we 
review SVRG, discuss several of its variants and  we describe the SCSG 
algorithm.  We provide a theoretical convergence analysis in Section 3.   
In Section 4, we give a comprehensive discussion on the difficulty measure $\H(f)$. The empirical results on real datasets are presented in Section \ref{sec:experiment}. Finally, we conclude our work and discuss potential extensions in Section \ref{sec:discussion}. All technical proofs are relegated to the Appendices.

\section{Notation, Assumptions and Algorithm}\label{sec:notation}
We write $\min\{a, b\}$ as $a\wedge b$ and $\max\{a, b\}$ as $a\vee b$ for brevity and use $\|\cdot\|$ to denote the Euclidean norm throughout the paper. We adopt the standard Landau's notation ($O(\cdot), o(\cdot), O_{p}(\cdot), o_{p}(\cdot), \Omega(\cdot)$). In some cases, we use $\td{O}(\cdot)$ to hide terms which are polynomial in parameters. The notation $\td{O}$ will only be used to maximize the readibility in discussions but not be used in the formal analysis. For convenience, we use $[n]$ to denote the set $\{1, \ldots, n\}$ and for any subset $\I\subset [n]$, we write $\nabla f_{\I}(x)$ the batch gradient $\frac{1}{|\I|}\sum_{i\in \I}\nabla f_{i}(x)$ for short. Finally, given random variables $Y$ and $Z$ and a random variable $X = f(Y, Z)$, denote by $\E_{X} Y$ the conditional expectation of $Y$ given $Z$, i.e. $\E(Y | Z)$. Note that when $Y$ is independent of $Z$, then $\E_{X}Y$ is equivalent to the the expectation of $Y$ holding $Z$ fixed. Furthermore, we use the symbol $\E$, without the subscript, to denote the expectation over all randomness. 

The assumption \textbf{A}1 on the smoothness of individual functions will be used throughout this paper. 
\begin{enumerate}[\textbf{A}1]
\item $f_{i}$ is convex with $L$-Lipschitz gradient
\[ f_{i}(x) - f_{i}(y) - \ik{f_{i}}{x}{y} \le \frac{L}{2}\|x - y\|^{2},
\]
for some $L < \infty$ and all $i\in \{1, \ldots, n\}$;
\end{enumerate}
The following assumption will be used in the context of strongly-convex objectives.
\begin{enumerate}[\textbf{A}2]
\item $f$ is strongly-convex with 
\[ f(x) - f(y) - \ik{f}{x}{y} \ge \frac{\mu}{2}\|x - y\|^{2},
\]
for some $\mu > 0$.
\end{enumerate}
Note that we only require the strong convexity of $f$ instead of each component. 

Let $x^{*}$ denote the minimizer of $f$ that minimizes $\H(f)$ in \eqref{eq:Hf}, then $\H(f)$ can be written as 
\begin{equation}
  \label{eq:Hf}
  \H(f) = \argmin_{x^{*}\in \argmin f(x)}\frac{1}{n}\sum_{i=1}^{n}\|\nabla f_{i}(x^{*})\|^{2}.
\end{equation}
We will abbreviate $\H(f)$ as $\H$ when no confusion can arise.  Note that $x^{*}$ is unique in many situations where $d < n$. When there are multiple minimum, we select $x^{*}$ be the one that minimizes the RHS of \eqref{eq:Hf}. Further let $\td{x}_{0}$ denote the initial value (possibly random) and 
\begin{equation}
  \label{eq:D0F0}
  \Delta_{x} = \E \|\td{x}_{0} - x^{*}\|^{2}, \quad \Delta_{f} = \E (f(\td{x}_{0}) - f(x^{*})).
\end{equation}
Then $\frac{\mu}{2}\Delta_{x}\le \Delta_{f}\le \frac{L}{2}\Delta_{x}$ under assumption \textbf{A}1 and  \textbf{A}2. A point $y$, possibly random, is called an $\eps$-approximated solution if
\[\E (f(y) - f(x^{*})) \le \eps.\]

In terms of the computation complexity, we assume that sampling an index $i$ and computing the pair $(f_{i}(x), \nabla f_{i}(x))$ incurs a unit of cost. This is conventional and called IFO framework in literature (\citet{Agarwal14, reddi16svrg}). 
We use  use $\comp(\eps)$ to denote the cost to achieve an $\eps$-accurate solution. In some contexts we also consider $\comp_{x}(\eps)$ as the cost to reach a solution $y$ with $\E \|y - x^{*}\|^{2}\le \eps$ \footnote{We only consider this quantity in the strongly convex case in which $x^{*}$ is uniquely defined.}. 

Finally, since our analysis heavily relies on geometric distributions, we formally define them here. We say a random variable $N\sim \mathrm{Geom}(\gamma)$ if $N$ is supported on non-negative integers \footnote{Here we allow $N$ to be zero to facilitate the analysis.} with
\[P(N = k) = (1 - \gamma)\gamma^{k},\quad \forall k = 0, 1, \ldots\]
The expectation of the above distributions satisfy that 
\begin{equation}\label{eq:exp_N}
\E N = \frac{\gamma}{1 - \gamma}.
\end{equation}



\begin{algorithm}[h]
\caption{Stochastic Variance Reduced Gradient (SVRG) Method}
\label{algo:svrg}
\textbf{Inputs: } Stepsize $\eta$, number of stages $T$, initial iterate $\td{x}_{0}$, number of SGD steps $m$.

\textbf{Procedure}
\begin{algorithmic}[1]
  \For{$j = 1, 2, \cdots, T$}
  \State $g_{j}\gets \nabla f(\td{x}_{j - 1}) = \frac{1}{n}\sum_{i = 1}^{n}\nabla f_{i}(\td{x}_{j - 1})$
  \State $\xj_{0}\gets \td{x}_{j - 1}$
  \State $N_{j}\gets m$
  \For{$k = 1, 2, \cdots, N_{j}$}
  \State Randomly pick $i_{k}\in [n]$
  \State $\nuj_{k - 1} \gets \nabla f_{i_{k}}(\xj_{k-1}) - \nabla f_{i_{k}}(\xj_{0}) + g_{j}$
  \State $\xj_{k}\gets \xj_{k-1} - \eta\nuj_{k - 1}$
  \EndFor
  \State $\td{x}_{j}\gets \xj_{N_{j}}$
  \EndFor
\end{algorithmic}
\textbf{Output: } (Option 1): $\td{x}_{T}$ \quad (Option 2): $\bar{x}_{T} = \frac{1}{T}\sum_{j=1}^{T}\td{x}_{j}$.
\end{algorithm}

\subsection{SVRG and Other Related Works}
The stochastic variance reduced gradient (SVRG) method blends gradient descent and stochastic gradient descent, using the former to control the effect of the variance of the latter~\cite{SVRG}.  We summarize SVRG in Algorithm \ref{algo:svrg}.

Using the definition from Section 2.1, it is easy to see that the computation cost of SVRG is $O((n + m)T)$. 
As shown in the convergence analysis of \cite{SVRG}, $m$ is required to be $\Omega(\kappa)$ to guarantee convergence. Thus, the computation cost of SVRG is $O((n + \kappa)T)$. The costs of the other algorithms considered in Table~\ref{tab:comp} can be obtain in a similar fashion. For comparison, we only present the results for smooth case (assumption \textbf{A}1).

A number of variants of SVRG have been studied.  For example, a constrained form of SVRG can be obtained by replacing line 8 with a projected gradient descent step~\cite{proximal_SVRG}.  A mini-batch variant of SVRG arises when one samples a subset of indices instead of a single index in line 6 and updates the iterates by the average gradient in this batch in line 7~\cite{mini_batch_SVRG}. Similarly, we can consider implementing the full gradient computation in line 2 using a subsample. This is proposed in \cite{PSVRG}, which calculates $g_{j}$ as $\frac{1}{B}\sum_{i\in \I}\nabla f_{i}(\td{x})$ where $\I$ is a subset of size $B$ uniformly sampled from  $\{1, \ldots, n\}$. \cite{PSVRG} heuristically show the potential for significant complexity reduction, but they only prove convergence for $B = \Omega(n)$ under the stringent condition that $\|\nabla f_{i}(x)\|$ is uniformly bounded for all $x$ and that all iterates are uniformly bounded. Similar to Nesterov's acceleration for gradient descent, momentum terms can be added to the SGD steps to accelerate SVRG~\cite{katyusha, AMSVRG}.

Much of this work focuses on the strongly convex case.  In the non-strongly convex setting one way to proceed is to add a $L_{2}$ regularizer 
$\frac{\lambda}{2}\|x\|^{2}$. Tuning $\lambda$, however, is subtle and 
requires multiple runs of the algorithm on a grid of $\lambda$~\cite{Zhu15}.  For general convex functions an alternative approach has been presented by \cite{Zhu15} (they generate $N_{j}$ by a different scheme in line 4), which proves a computation complexity $O\lb \frac{n}{\eps}\rb$. Another approach is discussed by \cite{reddi16svrg}, who improve the complexity to $O\lb n + \frac{\sqrt{n}}{\eps}\rb$ by scaling the stepsize as $O\lb \frac{1}{\sqrt{n}}\rb$. However, their algorithm still relies on calculating a full gradient. Other variants of SVRG have been proposed in the distributed computing setting~\cite{Lee15, Reddi15} and in the stochastic setting~\cite{daneshmand16, SSVRG}.

\subsection{SCSG}

\begin{algorithm}[htp]
\caption{Stochastically Controlled Stochastic Gradient (SCSG) Method} 
\label{algo:SCSG}
\textbf{Inputs: } Stepsize $\eta$, batch size $B$, number of stages $T$, initial iterate $\td{x}_{0}$.

\textbf{Procedure}
\begin{algorithmic}[1]
  \For{$j = 1, 2, \cdots, T$}
  \State Uniformly sample a batch $\I_{j}\in\{1, \cdots, n\}$ with $|\I_{j}| = B$
  \State $g_{j}\gets \frac{1}{B}\sum_{i\in\I_{j}}\nabla f_{i}(\td{x}_{j-1})$
  \State $\xj_{0}\gets \td{x}_{j-1}$
  \State Generate $N_{j}\sim \mathrm{Geom}\lb\frac{\Bj}{\Bj + 1}\rb$ 
  \For{$k = 1, 2, \cdots, N_{j}$}
  \State Randomly pick $i_{k}\in [n]$ 

  \State $\nuj_{k - 1} \gets \nabla f_{i_{k}}(\xj_{k-1}) - \nabla f_{i_{k}}(\xj_{0}) + g_{j}$
  \State $\xj_{k}\gets \xj_{k-1} - \eta\nuj_{k - 1}$
  \EndFor
  \State $\td{x}_{j}\gets \xj_{N_{j}}$
  \EndFor
\end{algorithmic}
\textbf{Output: } (Strongly convex case): $\td{x}_{T}$ \quad (Non-strongly convex case): $\bar{x}_{T} = \frac{1}{T}\sum_{j=1}^{T}\td{x}_{j}$.
\end{algorithm}

SCSG \footnote{{\color{red}The earlier paper samples $i_{k}$ from $\mathcal{I}_{j}$ in line 7 of Algorithm \ref{algo:SCSG}.}}is similar to \cite{PSVRG} in that it implements the gradient computation on a subsample $\I$ of size $B$; See Algorithm \ref{algo:SCSG}. However, instead of being fixed, the number of SGD updates of SCSG is a geometrically distributed random variable (line 5). Surprisingly, this seemingly technical modification enables the analysis in the non-strongly convex case and a much tighter convergence analysis without imposing unrealistic assumptions like the boundedness of iterates produced by the algorithm; See Section \ref{sec:ana} for details. Recently we found that \cite{hofmann15} also implicitly uses the geometric size of the inner loop. However, they do not use the iterate at the end of each epoch, i.e. $\td{x}_{j}$ and hence cannot prove the non-strongly convex case. 



\begin{table}[htp]
\interfootnotelinepenalty=10000
  \centering
  \centerline{\begin{tabular}{cccccc}
    \toprule
    & Non-Strongly Convex  & Strongly Convex  & Const. $\eta$?  & Dep. $\Delta_{x}, T$? & $f_{i}$ Lip.?\\
    \midrule
    SCSG & $O\lb \frac{1}{\eps^{2}}\wedge \frac{n}{\eps}\rb$ & $O\lb (\frac{\kappa}{\eps}\wedge n + \kappa)\log\frac{1}{\eps }\rb$ & Yes & No & No\\ 
    SGD\cite{reddi16svrg, Rakhlin12}\footnote{Theorem 6 of \cite{reddi16svrg} for the non-strongly convex case and Theorem 5 of \cite{Rakhlin12} for the strongly convex case.} & $O\lb\frac{1}{\eps ^{2}}\rb$ & $O\lb\frac{1}{\mu\eps}\rb$  & Yes / No & No / Yes\footnote{In the non-strongly convex case, the stepsize is either set to be $\Delta_{x} / \sqrt{T}$ for given number of total steps $T$. In the strongly convex case, the stepsize is set to be $\frac{c}{\mu t}$.} & Yes\\
    SVRG\cite{SVRG}\footnote{No result for the non-strongly convex case and Theorem 1 of \cite{SVRG} for the strongly convex case.} & - & $O\lb (n +  \kappa)\log\frac{1}{\eps }\rb$ & Yes & No & No\\
    MSVRG\cite{reddi16svrg}\footnote{Corollary 13 of \cite{reddi16svrg} for the non-strongly convex case and no result for the strongly convex case. The complexity bound $O\lb \frac{1}{\eps^{2}}\wedge \frac{\sqrt{n}}{\eps}\rb$ claimed in the paper is incorrect since it does not account for the cost of computing the full gradient.} & $O\lb n+\frac{1}{\eps^{2}}\wedge \frac{\sqrt{n}}{\eps} \rb$ & - & Yes & Yes & Yes\\
    SAGA\cite{SAGA}\footnote{Section 2 of \cite{SAGA} for both cases.} & $O\lb\frac{n}{\eps }\rb$ & $O\lb (n + \kappa)\log\frac{1}{\eps }\rb$ & Yes & No & No\\
    APSDCA\cite{APSDCA}\footnote{No result for non-strongly convex case and Theorem 1 of \cite{APSDCA} for the strongly convex case.} & - & $O\lb (n + \kappa)\log \frac{1}{\eps }\rb$ & Yes & No & No\\
    APCG\cite{APCG}\footnote{Theorem 1 of \cite{APCG} for both cases.} & $O\lb\frac{n}{\sqrt{\eps }}\rb$ & $O\lb \frac{n}{\sqrt{\mu}}\log\frac{1}{\eps }\rb$ & Yes & No & No\\
    SPDC\cite{SPDC}\footnote{No results for the non-strongly convex case and Section 1 of \cite{SPDC} for Empirical Risk Minimization.} & - & $O\lb (n + \sqrt{n\kappa})\log \frac{1}{\eps }\rb$ & Yes & No & No\\
    Catalyst\cite{Catalyst}\footnote{Table 1 of \cite{Catalyst} for both cases} & $O\lb\frac{n}{\sqrt{\eps }}\rb$ & $O\lb \lb n + \sqrt{n\kappa}\rb\log \frac{1}{\eps }\rb$ & No & Yes & No\\
    SVRG++\cite{Zhu15}\footnote{Theorem 4.1 of \cite{Zhu15} for the non-strongly convex case and no result for the strongly convex case.} & $O\lb n\log \frac{1}{\eps } + \frac{1}{\eps }\rb$ & - & Yes & Yes & No \\
    AMSVRG\cite{AMSVRG}\footnote{Theorem 2 of \cite{AMSVRG} for the non-strongly convex case and Theorem 3 of \cite{AMSVRG} for the strongly convex case.} & $O\lb\lb n + \frac{n}{\eps n + \sqrt{\eps }}\rb\log \frac{1}{\eps }\rb$ & $O\lb\lb n + \frac{n\kappa}{n + \sqrt{\kappa}}\rb\log \frac{1}{\eps }\rb$ & Yes & No & No\\
    Katyusha\cite{katyusha}\footnote{Corollary 4.3 of \cite{katyusha} for the non-strongly convex case and Theorem 3.1 for the strongly convex case.} & $O\lb n\log\frac{1}{\eps } + \sqrt{\frac{n}{\eps }}\rb$ & $O\lb (n + \sqrt{n\kappa})\log \frac{1}{\eps }\rb$ & No & No & No\\
    \bottomrule
  \end{tabular}}
  \caption{Comparison of the computation cost of SCSG and other algorithms for smooth convex objectives. The third column indicates whether the algorithm uses a fixed stepsize $\eta$; the fourth column indicates whether the tuning parameter depends on unknown quantities, e.g. $\Delta_{x}, \Delta_{f}, T$; the last column indicates whether $f_{i}$ is required to be Lipschitz or (almost) equivalently $\|\nabla f_{i}\|$ is required to be bounded.}\label{tab:comp}
\end{table}

The average computation cost of SCSG is $BT + \sum_{j=1}^{n}N_{j}$. By the law of large numbers and the expectation formula \eqref{eq:exp_N}, this is close to $2BT$. Table \ref{tab:comp} summarizes the computation complexity as well as some other details of SCSG and 11 other existing popular algorithms. The table includes the computation cost of optimizing non-strongly-convex functions (column 1) and strongly convex functions (column 2). In practice, the amount of tuning is of major concern. For this reason, a fixed stepsize is usually preferred to a complicated stepsize scheme and it is better that the tuning parameter does not depend on unknown quantities; e.g., $\Delta_{x}$ or the total number of epochs $T$. These issues are documented in column 3 and column 4. Moreover, many algorithms requires $\|\nabla f_{i}\|$ to be bounded, i.e. $f_{i}$ to be Lipschitz. However, this assumption is not realistic in many cases and it is better to discard it. To address this issue, we document it in column 5. To highlight the dependence on $\eps$ and $\kappa$ (or $\mu$), we implicitly assume that other parameters, e.g. $\Delta_{x}, L$, are $O(1)$ as a convention.

As seen from Table \ref{tab:comp}, SCSG and SGD are the only two methods which are able to reach an $\eps$-approximate solution with potentially less than a single pass through the data; moreover, the number of accesses of the data is independent of the sample size $n$. Comparing to SCSG, SGD requires each $f_{i}$ to be Lipschitz, which is not satisfied by least-square objectives. By contrast, as will be shown in Section \ref{sec:ana}, the computation cost of SCSG only depends on the quantity $\H(f)$, which is relatively small in many cases. Furthermore, SGD either sets the stepsize based on unknown quantities like the total number of epochs $T$ or needs to use a time-varying sequence of stepsizes. This involves intensive tuning as opposed to a fixed stepsize.

On the other hand, SCSG is communication-efficient since it only needs to operate on mini-batches as SGD. This is particularly important in modern large-scale tasks. By contrast, those algorithms that require full gradients evaluation either need extra communication for synchronization or need extra computational cost for the asynchronous version to converge; See e.g. \cite{Reddi15, Lee15}.

\section{Convergence Analysis}\label{sec:ana}
In this section we present a convergence analysis of SCSG. We first state the following key lemma that connects our algorithm with the measure $\H$ defined in \eqref{eq:Hf}.
\begin{lemma}\label{lem:sg_optim_var}
Let $\I\in\{1, \cdots, n\}$ be a random subset of size $B$, and 
define the random variable $g = \nabla f_{\I}(x^{*})$.
Then $\E g = 0$ and 
\[
\E \|g\|^{2} = \frac{(n - B) \H }{(n - 1)B}\le \frac{\H\cdot I(B < n)}{B}.
\]
\end{lemma}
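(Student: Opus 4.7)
The plan is to treat this as the standard finite-population sampling-without-replacement calculation applied to the vectors $y_i := \nabla f_i(x^*)$.

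First, I would observe that the definition of $\H$ in \eqref{eq:Hf} combined with our selection rule for $x^*$ gives $\frac{1}{n}\sum_{i=1}^n \|y_i\|^2 = \H$, while the first-order optimality condition for $f = \frac{1}{n}\sum_i f_i$ yields $\bar{y} := \frac{1}{n}\sum_{i=1}^n y_i = \nabla f(x^*) = 0$. So the problem reduces to computing the mean and second moment of $g = \frac{1}{B}\sum_{i\in\I} y_i$ when $\I$ is a uniformly random size-$B$ subset of $[n]$ drawn from a population whose mean is zero and whose second moment equals $\H$.

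For the mean, each index $i$ is included in $\I$ with probability $B/n$, so by linearity $\E g = \frac{1}{B}\sum_i P(i\in\I)\, y_i = \frac{1}{n}\sum_i y_i = 0$. For the second moment, I would expand
\[
\E \|g\|^{2} = \frac{1}{B^{2}}\sum_{i,j} P(i\in\I,\, j\in\I)\,\langle y_i, y_j\rangle,
\]
and plug in the elementary sampling-without-replacement probabilities $P(i\in \I) = B/n$ and $P(i\in\I,\, j\in\I) = B(B-1)/(n(n-1))$ for $i\neq j$. Splitting the sum into the diagonal and off-diagonal pieces and using the identity $\sum_{i\neq j}\langle y_i, y_j\rangle = \|\sum_i y_i\|^2 - \sum_i \|y_i\|^2 = -n\H$ (thanks to $\bar{y}=0$) cleans everything up and yields, after simplification, exactly $\E\|g\|^2 = \frac{n-B}{(n-1)B}\H$.

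Finally, for the inequality I would just note that $\frac{n-B}{n-1}\le 1$ for every $B\in\{1,\dots,n\}$, and that this ratio equals $0$ precisely when $B=n$, which gives the $I(B<n)$ indicator form. There is no real obstacle here; the only thing to be careful about is bookkeeping the two sampling probabilities correctly, since it is the gap $\frac{n-B}{n-1}$ versus $\frac{1}{B}$ that distinguishes sampling without replacement from sampling with replacement and produces the $I(B<n)$ factor that will later make the $B=n$ (full-gradient) case of SVRG fall out as a corollary.
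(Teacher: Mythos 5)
Your proof is correct and follows essentially the same route as the paper, which also reduces to the finite-population sampling-without-replacement calculation (via indicator variables $W_j = I(j\in\I)$ with $\E W_j = B/n$ and $\E W_jW_{j'} = B(B-1)/(n(n-1))$, applied to the zero-mean population $\nabla f_i(x^*)$). The bookkeeping in your diagonal/off-diagonal split checks out and yields exactly $\frac{(n-B)\H}{(n-1)B}$, with the indicator bound following as you describe.
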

The proof, which appears in Appendix \ref{app:one_epoch}, involves a standard technique for analyzing sampling without replacement.  Obviously, $ \H = O(1)$  if $\|\nabla f_{i}(x)\|$ is uniformly bounded as is often assumed in the literature. In section \ref{sec:Hf} we will present various other situations where $\H = O(1)$.

Note that the extra variation vanishes when $B = n$ and in general is inversely proportional to the batch size. In the rest of this section, we will first discuss the case $B = n$, which we refer to as R-SVRG (Randomized SVRG), to compare with the original SVRG. Later we will discuss the general case. 

\subsection{Analysis of R-SVRG}\label{sec:RSVRG}
We start from deriving the sub-optimality bound for $\bar{x}_{T}$ and $\td{x}_{T}$ respectively.

\begin{theorem}\label{thm:SVRG_bound}
Let $B = n$ and assume that $\eta L \le \frac{1}{3}$, then
\begin{enumerate}[(1)]
\item under the assumption \textbf{A}1,
\[\displaystyle \E (f(\bar{x}_{T}) - f(x^{*}))\le \frac{1}{T}\cdot\frac{4\etaj^{2}L  n  \Delta_{f} + \Delta_{x}}{2\etaj n (1 - 2\etaj L)};\]
\item under the assumption \textbf{A}1 and \textbf{A}2,
\[\displaystyle \E \|\td{x}_{T} - x^{*}\|^{2} + 2\etaj n \E (f(\td{x}_{T}) - f(x^{*}))\le \lambda^{T}\lb \Delta_{x} + 4\etaj n \Delta_{f}\rb,\] 
where 
\[\lambda = \max\left\{2\etaj L, \frac{1}{1 + \mu\etaj n(1 - 3\etaj L)}\right\}.\]
\end{enumerate}
\end{theorem}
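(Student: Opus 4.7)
My plan is to derive a one-epoch recursion relating $\tilde x_j$ to $\tilde x_{j-1}$ and then iterate it; the geometric distribution of $N_j$ replaces the fixed loop length of classical SVRG. Working conditionally on $\tilde x_{j-1}$ and the inner iterate $x_{k-1}^{(j)}$, the choice $B = n$ forces $g_j = \nabla f(\tilde x_{j-1})$ exactly, so $\E[\nu_{k-1}^{(j)} \mid x_{k-1}^{(j)}] = \nabla f(x_{k-1}^{(j)})$. Splitting
\[
\nu_{k-1}^{(j)} = [\nabla f_{i_k}(x_{k-1}^{(j)}) - \nabla f_{i_k}(x^*)] + [\nabla f_{i_k}(x^*) - \nabla f_{i_k}(x_0^{(j)}) + \nabla f(x_0^{(j)})],
\]
noting the second bracket is centred, and applying co-coercivity $\E_i\|\nabla f_i(x) - \nabla f_i(x^*)\|^2 \le 2L(f(x) - f(x^*))$ gives the standard SVRG variance bound $\E[\|\nu_{k-1}^{(j)}\|^2 \mid x_{k-1}^{(j)}] \le 4L[(f(x_{k-1}^{(j)}) - f(x^*)) + (f(x_0^{(j)}) - f(x^*))]$. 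Expanding $\|x_k^{(j)} - x^*\|^2$ from the update rule and using convexity to lower bound $\la\nabla f(x_{k-1}^{(j)}), x_{k-1}^{(j)} - x^*\ra \ge f(x_{k-1}^{(j)}) - f(x^*)$ yields the per-iteration descent
\[
\E[\|x_k^{(j)} - x^*\|^2 \mid x_{k-1}^{(j)}] \le \|x_{k-1}^{(j)} - x^*\|^2 - 2\eta(1-2\eta L)(f(x_{k-1}^{(j)}) - f(x^*)) + 4\eta^2 L (f(x_0^{(j)}) - f(x^*)).
\]

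The SCSG-specific ingredient is then the geometric averaging. For $N \sim \mathrm{Geom}(\gamma)$ with pmf $P(N = k) = (1-\gamma)\gamma^k$, a direct manipulation of the series $\E[h(N)] = (1-\gamma)\sum_{k \ge 0} h(k)\gamma^k$ gives the telescoping identity $\E[a_{N+1}] - \E[a_N] = \frac{1-\gamma}{\gamma}(\E[a_N] - a_0)$ for every deterministic $(a_k)$; with $\gamma = B/(B+1)$ this prefactor equals $1/B$. Taking total expectation in the per-iteration descent, averaging over the independent $N_j$ with $a_k = \E\|x_k^{(j)} - x^*\|^2$, and specializing to $B = n$ produces the one-epoch bound
\[
D_j + 2\eta n(1 - 2\eta L) F_j \le D_{j-1} + 4\eta^2 L n F_{j-1}, \qquad (\star)
\]
where $D_j = \E\|\tilde x_j - x^*\|^2$ and $F_j = \E(f(\tilde x_j) - f(x^*))$.

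For part (1), I would sum $(\star)$ over $j = 1, \ldots, T$: the $D_j$ terms telescope, and re-indexing $\sum_j F_{j-1}$ in terms of $\sum_j F_j$ plus the boundary $F_0 = \Delta_f$, together with Jensen's inequality $\E(f(\bar x_T) - f(x^*)) \le T^{-1}\sum_{j=1}^T F_j$ and the stepsize condition $\eta L \le \tfrac{1}{3}$, yields the claimed $O(1/T)$ rate with the stated constants after routine rearrangement.

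For part (2), I would seek a Lyapunov function $\Phi_j = D_j + c F_j$ that contracts by a factor $\lambda$ per epoch. From $(\star)$ the key task is to bound $D_{j-1} + 4\eta^2 L n F_{j-1} \le \lambda \Phi_{j-1}$ by splitting into two regimes: when $F_{j-1}$ is small relative to $D_{j-1}$, smoothness $F_{j-1} \le (L/2) D_{j-1}$ forces $\lambda \ge 2\eta L$; when $F_{j-1}$ is large, strong convexity $F_{j-1} \ge (\mu/2) D_{j-1}$ forces $\lambda \ge 1/(1 + \mu\eta n(1 - 3\eta L))$. Taking $\lambda$ as the maximum of the two branches and unrolling $T$ times yields the stated geometric decay. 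The principal obstacle is pinning down the exact constants: the stated Lyapunov has coefficient $2\eta n$ on $F_T$ (not $2\eta n(1-2\eta L)$) and $4\eta n\Delta_f$ on the right (not $4\eta^2 L n\Delta_f$), and the asymmetric $1 - 3\eta L$ in the denominator of $\lambda$ will not emerge from a literal reuse of $(\star)$. Closing this constant gap should require a careful choice of $c$ together with a slightly sharper per-iteration bound in the strongly convex regime that uses the extra slack from $\mu > 0$ to absorb the $2\eta L$ loss.
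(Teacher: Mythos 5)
Your overall architecture matches the paper's: a per-epoch recursion obtained by combining a single-step expansion of $\|x_k^{(j)}-x^*\|^2$ with the geometric-stopping identity $\E[a_{N+1}]-\E[a_N]=\frac{1-\gamma}{\gamma}(\E[a_N]-a_0)$ (this is exactly the paper's Lemma \ref{lem:geom}), followed by telescoping for part (1) and a one-step contraction of a Lyapunov quantity for part (2). However, there is a genuine gap in the recursion $(\star)$ itself, and it is not merely a matter of "pinning down constants" in part (2): it already breaks part (1). Your $(\star)$ reads $D_j + 2\eta n(1-2\eta L)F_j \le D_{j-1} + 4\eta^2 L n F_{j-1}$, because you used the standard SVRG variance bound $\E\|\nu_{k-1}^{(j)}\|^2 \le 4L[(f(x_{k-1}^{(j)})-f(x^*))+(f(x_0^{(j)})-f(x^*))]$ and then spent part of the descent term $-2\eta\langle\nabla f(x_{k-1}^{(j)}),x_{k-1}^{(j)}-x^*\rangle$ to cancel the $+4\eta^2 L(f(x_{k-1}^{(j)})-f(x^*))$. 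Summing $(\star)$ over $j$ and re-indexing $\sum_j F_{j-1}$ then leaves the coefficient $2\eta n(1-2\eta L)-4\eta^2 Ln = 2\eta n(1-4\eta L)$ on the common terms, so you obtain the denominator $1-4\eta L$, not $1-2\eta L$. Under the stated hypothesis $\eta L\le\frac13$ this can be negative (e.g.\ $\eta L=1/3$), so the argument does not deliver the theorem as stated and is vacuous for part of the allowed stepsize range.

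The missing ingredient is the sharper variance bound of the paper's Lemma \ref{lem:nuknorm}: write $\E_{i_k}\|\nu_k^{(j)}\|^2 = \E_{i_k}\|\nu_k^{(j)}-\E_{i_k}\nu_k^{(j)}\|^2+\|\nabla f(x_k^{(j)})\|^2$ and apply co-coercivity to $f_{i_k}$ between $x^*$ and $x_k^{(j)}$ in the orientation that keeps the terms $-4L(f(x_k^{(j)})-f(x^*))+4L\langle\nabla f(x_k^{(j)}),x_k^{(j)}-x^*\rangle$ explicit, then bound $\|\nabla f(x_k^{(j)})\|^2$ by co-coercivity of $f$ itself. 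Fed into the duality-gap inequality (the paper's Lemma \ref{lem:dual} with $u=x^*$ and $e_j=0$), the $\eta^2 n(-6LF_j)$ term exactly cancels the $6\eta L\, F_j$ lost when lower-bounding $2\eta n(1-3\eta L)\langle\nabla f(\td{x}_j),\td{x}_j-x^*\rangle \ge 2\eta n(1-3\eta L)(F_j+\frac{\mu}{2}D_j)$, yielding the paper's recursion $(1+\mu\eta n(1-3\eta L))D_j+2\eta n F_j\le D_{j-1}+4\eta^2 LnF_{j-1}$ with coefficient exactly $2\eta n$ on $F_j$. From this, part (1) telescopes to the stated $1-2\eta L$ denominator, and part (2) follows by the coefficientwise comparison $D_{j-1}+4\eta^2 LnF_{j-1}\le \lambda\lb(1+\mu\eta n(1-3\eta L))D_{j-1}+2\eta nF_{j-1}\rb$ with $\lambda=\max\{2\eta L,(1+\mu\eta n(1-3\eta L))^{-1}\}$ — no case split on the relative sizes of $F_{j-1}$ and $D_{j-1}$ is needed, and the fix has nothing to do with extra slack from $\mu>0$ (indeed the same sharpening is required in the non-strongly convex part). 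The initialization $Q_0\le\Delta_x+4\eta n\Delta_f$ then follows from $\mu\Delta_x\le 2\Delta_f$.
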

Based on Theorem \ref{thm:SVRG_bound}, we first consider a constant stepsize $\etaj$ scaled as $\frac{1}{L}$. 

\begin{corollary}\label{cor:SVRG_complexity}
Let $\etaj = \frac{\theta}{L}$ with $\theta < \frac{1}{3}$. Then under the assumption \textbf{A}1, with the output $\bar{x}_{T}$, 
\begin{equation}\label{eq:SVRG_ns}
\E \comp(\eps) = O\lb \frac{n\Delta_{f} + L\Delta_{x}}{\eps}\rb.
\end{equation}
If further the assumption \textbf{A}2 is satisfied, then the output $\td{x}_{T}$ satisfies that 
\begin{equation}\label{eq:SVRG_sc}
\E \comp(\eps) = O\lb (n + \kappa) \log \lb\frac{\Delta_{f}}{\eps} + \frac{L\Delta_{x}}{n\eps}\rb\rb, \,\, \E \comp_{x}(\eps) = O\lb (n + \kappa) \log \lb\frac{n\Delta_{f}}{L\eps} + \frac{\Delta_{x}}{\eps}\rb\rb.
\end{equation}
\end{corollary}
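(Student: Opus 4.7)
The plan is to derive Corollary \ref{cor:SVRG_complexity} as a direct numerical consequence of Theorem \ref{thm:SVRG_bound}, by plugging in $\eta = \theta/L$, solving for the number of stages $T$ needed to make the right-hand side at most $\eps$, and multiplying by the average cost per stage. First I would bound the per-stage cost: in R-SVRG ($B = n$) the full batch costs $n$ IFO calls, and $N_{j}\sim \mathrm{Geom}(n/(n+1))$ has $\E N_{j} = n$ by \eqref{eq:exp_N}, so the average total cost after $T$ stages is $O(nT)$. Everything therefore reduces to controlling $T$.

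For the non-strongly convex statement, I would insert $\eta = \theta/L$ into part (1) of Theorem~\ref{thm:SVRG_bound}. The factor $1 - 2\eta L = 1 - 2\theta$ is a positive constant, and the numerator becomes $4\theta^{2}n\Delta_{f}/L + \Delta_{x}$ while the denominator carries a factor $2\theta n/L$, giving
\[
\E (f(\bar{x}_{T}) - f(x^{*})) = O\!\left(\frac{n\Delta_{f} + L\Delta_{x}}{nT}\right).
\]
Setting this bound equal to $\eps$ gives $T = O((n\Delta_{f} + L\Delta_{x})/(n\eps))$, and multiplying by $n$ yields \eqref{eq:SVRG_ns}.

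For the strongly convex case I would analyze the contraction factor $\lambda = \max\{2\theta,\; 1/(1+\mu\eta n(1-3\eta L))\}$. With $\eta = \theta/L$ one has $2\theta < 2/3$ and $\mu\eta n(1-3\eta L) = \theta(1-3\theta)\,n/\kappa$, so
\[
\lambda \le \max\!\left\{2\theta,\; \frac{1}{1 + c\,n/\kappa}\right\}
\]
for a constant $c = \theta(1-3\theta)$. Using $\log(1+u)\ge u/(1+u)$, this gives $\log(1/\lambda) = \Omega(n/(n+\kappa))$, i.e.\ the number of stages needed to drive $\lambda^{T}$ below a target $\eps'$ is $T = O((1 + \kappa/n)\log(1/\eps'))$. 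The prefactor in Theorem~\ref{thm:SVRG_bound}(2) is $\Delta_{x} + 4\eta n\Delta_{f} = \Delta_{x} + (4\theta/L)\,n\Delta_{f}$, and to convert function-value accuracy to $\eps$ I divide the function-value part of the bound by $2\eta n = 2\theta n/L$, yielding
\[
\E(f(\td{x}_{T}) - f(x^{*})) = O\!\left(\lambda^{T}\!\left(\Delta_{f} + \tfrac{L\Delta_{x}}{n}\right)\right),
\qquad
\E\|\td{x}_{T} - x^{*}\|^{2} = O\!\left(\lambda^{T}\!\left(\Delta_{x} + \tfrac{n\Delta_{f}}{L}\right)\right).
\]
Taking $\eps' = \eps/(\Delta_{f} + L\Delta_{x}/n)$ in the first bound and $\eps' = \eps/(\Delta_{x} + n\Delta_{f}/L)$ in the second, then multiplying $T$ by the per-stage cost $O(n)$, gives both displays in \eqref{eq:SVRG_sc}.

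The only non-bookkeeping step is the contraction estimate $\log(1/\lambda) = \Omega(n/(n+\kappa))$, which must be uniform in the two regimes $\kappa \lesssim n$ (where $\lambda = 2\theta$ is a fixed constant and each log factor contributes $O(1)$) and $\kappa \gg n$ (where $\lambda = 1/(1+cn/\kappa)$ and the logarithm contributes $O(\kappa/n)$); merging these into the single expression $(n+\kappa)$ is where one must be careful with constants, but is otherwise elementary. Everything else is straightforward substitution.
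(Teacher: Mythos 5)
Your proposal is correct and follows essentially the same route as the paper's proof: substitute $\eta=\theta/L$ into Theorem~\ref{thm:SVRG_bound}, solve for the number of stages $T(\eps)$ (resp.\ $T_{x}(\eps)$), bound $(\log(1/\lambda))^{-1}=O(1+\kappa/n)$ by treating the two branches of the max separately, and multiply by the $O(n)$ expected per-stage cost. The only cosmetic difference is that the paper packages the prefactor as $Q_{0}=\Delta_{x}+4\eta n\Delta_{f}$ before rescaling, which is exactly your $\Delta_{f}+L\Delta_{x}/n$ and $\Delta_{x}+n\Delta_{f}/L$ after division by $2\eta n$.
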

The above theorem is appealing in three aspects: 1) in the strongly convex case, no parameter depends on $\mu$. This is in contrast to the original SVRG where the number of SGD updates should be proportional to $\kappa$ in order to guarantee the theoretical convergence \cite{SVRG}. \footnote{In \cite{SVRG}, the algorithm is guaranteed to converge only if $\frac{1}{\mu\etaj (1 - 2L\etaj)m} + \frac{2L\etaj}{1 - 2L\etaj} < 1$ where $m$ is the number of SGD updates. This entails that $m = \Omega(\kappa)$.} Being agnostic to $\mu$ is useful in that $\mu$ is hard to estimate in practice; 2) the same setup also guarantees the convergence of $\E \|\td{x}_{T} - x^{*}\|^{2}$ in the strongly convex case with an almost identical cost up to a $\log n$ factor. This is important especially in statistical problems but unfortunately not covered in existing literature to the best of our knowledge; 3) the same stepsize guarantees the convergence in both the non-strongly convex and the strongly convex case and the only requirement is $\eta < \frac{1}{3L}$, which is quite mild. Note that the requirement for the convergence of gradient descent is $\eta < \frac{1}{L}$.

By scaling $\etaj$ as $\frac{1}{\sqrt{n}}$, R-SVRG is able to achieve the same complexity of \cite{reddi16svrg}, which is the best bound in the class of SVRG-type algorithms without acceleration techniques.

\begin{corollary}\label{cor:SVRG_complexity_ns}
  Let $\etaj = \frac{\theta}{L\sqrt{n}}$ with $\theta \le \frac{1}{3}$. Then under the assumption \textbf{A}1, with the output $\bar{x}_{T}$,
\begin{equation}\label{eq:SVRG_ns_refine}
\E \comp(\eps) = O\lb n + \frac{\sqrt{n}L\Delta_{x}}{\eps}\rb.
\end{equation}
\end{corollary}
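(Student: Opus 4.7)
The plan is to specialize Theorem \ref{thm:SVRG_bound}(1) to the choice $\eta = \theta/(L\sqrt{n})$ with $\theta \le 1/3$, and then translate the resulting sub-optimality bound in $T$ into a computation-cost bound via the per-epoch cost of R-SVRG.

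First I would substitute $\eta = \theta/(L\sqrt{n})$ into the right-hand side of Theorem \ref{thm:SVRG_bound}(1). The numerator $4\eta^{2}Ln\Delta_{f} + \Delta_{x}$ becomes $(4\theta^{2}/L)\Delta_{f} + \Delta_{x}$, and the denominator $2\eta n(1-2\eta L)$ becomes $(2\theta\sqrt{n}/L)(1-2\theta/\sqrt{n})$. For $n$ larger than a constant depending only on $\theta$ (say $n \ge 16\theta^{2}$, which makes $1-2\theta/\sqrt{n} \ge 1/2$), the denominator is at least $\theta\sqrt{n}/L$ up to constants, so after multiplying through the bound becomes
\[
\E(f(\bar{x}_{T}) - f(x^{*})) \le \frac{1}{T}\cdot\frac{L\Delta_{x} + 4\theta^{2}\Delta_{f}}{\theta\sqrt{n}}.
\]
Using $\Delta_{f}\le \frac{L}{2}\Delta_{x}$ from Assumption \textbf{A}1, the numerator collapses to $O(L\Delta_{x})$, yielding $\E(f(\bar{x}_{T}) - f(x^{*})) = O\bigl(L\Delta_{x}/(T\sqrt{n})\bigr)$. (For the small-$n$ regime $n < 16\theta^{2}$, the total dataset is $O(1)$ and one full epoch of cost $O(n)$ already suffices, contributing only to the additive $n$ term.)

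Next I would pick $T$ to make the right-hand side at most $\eps$, giving $T = \lceil c\,L\Delta_{x}/(\sqrt{n}\eps)\rceil$ for a constant $c$ depending only on $\theta$. To convert this into computation cost I invoke the per-epoch cost accounting described right after Algorithm \ref{algo:SCSG}: in each outer iteration one computes $B = n$ component gradients to form $g_{j}$, and performs $N_{j}\sim\mathrm{Geom}(n/(n+1))$ inner SGD steps, so $\E N_{j} = n$ by \eqref{eq:exp_N}. Thus the expected per-epoch cost is $2n$, and summing over $T$ epochs gives $\E\comp(\eps) = O(nT) = O\bigl(\sqrt{n}L\Delta_{x}/\eps\bigr)$. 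Adding the $O(n)$ overhead from the ceiling (equivalently, from the case when $L\Delta_{x}/(\sqrt{n}\eps) < 1$ and a single epoch is already enough) produces the advertised bound $O\bigl(n + \sqrt{n}L\Delta_{x}/\eps\bigr)$.

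There is really no hard step here: the argument is just a substitution into Theorem \ref{thm:SVRG_bound}(1) followed by the standard per-epoch cost tally. The one point that needs mild care is making sure the factor $(1 - 2\eta L)^{-1} = (1 - 2\theta/\sqrt{n})^{-1}$ in the denominator does not blow up; this is handled by separating the small-$n$ case (absorbed into the additive $n$) from the large-$n$ case where the factor is bounded by a constant. Combining the two cases with a single $\max$ yields \eqref{eq:SVRG_ns_refine}.
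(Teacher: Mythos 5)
Your argument is correct and is essentially the paper's proof: substitute $\eta = \theta/(L\sqrt{n})$ into part (1) of Theorem \ref{thm:SVRG_bound}, use $\Delta_{f}\le \frac{L}{2}\Delta_{x}$ to get $\E(f(\bar{x}_{T})-f(x^{*})) = O(L\Delta_{x}/(T\sqrt{n}))$, and multiply $T(\eps) = O(1 + L\Delta_{x}/(\sqrt{n}\eps))$ by the $O(n)$ expected per-epoch cost. The one cosmetic difference is that your small-$n$/large-$n$ case split is unnecessary (and the claim that a single epoch suffices in the small-$n$ branch is not actually justified): since $\theta\le\frac{1}{3}$ and $\sqrt{n}\ge 1$, one has $1-2\eta L = 1-2\theta/\sqrt{n}\ge 1-2\theta\ge\frac{1}{3}$ uniformly in $n$, which is how the paper bounds the denominator in one line.
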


\subsection{Analysis of SCSG \protect\footnote{{\color{red} Our complexity bound $O\lb\frac{1}{\eps} + \kappa\rb$ in the earlier version for the strongly convex case violates the lower bound $O\lb\frac{\kappa}{\eps}\rb$ by \cite{woodworth16} because our proof relies on a wrong statement that $\E_{i_{k}}\nuj_{k} = \nabla f(\xj_{k})$. We correct the mistake in this version by using a more delicate derivation. The results for the non-strongly convex case still hold while the results for the strongly convex case is worsen to $\td{O}\lb\frac{\kappa}{\eps}\rb$.}}}\label{sec:ana_SCSG}

Due to the technical complications, we discuss the non-strongly convex case and the strongly convex cases separately in the general case. Similar to R-SVRG, we first derive the sub-optimality bound for $\bar{x}_{T}$. 

\begin{theorem}\label{thm:SCSG_bound}
Assume that $\eta L < \frac{1}{13}$. Under the assumption \textbf{A}1,
\[\displaystyle \E (f(\bar{x}_{T}) - f(x^{*}))\le \frac{1}{T}\cdot\frac{4\etaj B \Delta_{f} + \Delta_{x}}{2\etaj B (1 - 13\etaj L)} + \frac{9\etaj\H \cdot I(\Bj < n)}{2(1 - 13\etaj L)}.\]
\end{theorem}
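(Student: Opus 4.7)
The plan is to prove the bound via a standard three-level decomposition: a one-step in-loop recursion, an inner-loop aggregation that exploits the geometric distribution of $N_j$, and finally an outer telescoping over the $T$ epochs. Throughout, I would condition on $\I_j$ inside the $j$-th epoch, so that $g_j$ and hence the bias $\delta_j := g_j - \nabla f(\td{x}_{j-1})$ are frozen, and only handle $\I_j$ after the inner-loop analysis is finished.

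For the one-step recursion, write
\[
  V_k := \|\xj_k - x^*\|^2, \qquad \nuj_k = \nabla f_{i_{k+1}}(\xj_k) - \nabla f_{i_{k+1}}(\xj_0) + g_j,
\]
so $\E_{i_{k+1}}\nuj_k = \nabla f(\xj_k) + \delta_j$. Expanding the square in the SGD update and taking $\E_{i_{k+1}}$ yields
\[
  \E_{i_{k+1}} V_{k+1} \le V_k - 2\eta(f(\xj_k) - f(x^*)) - 2\eta\langle \delta_j, \xj_k - x^*\rangle + \eta^2 \E_{i_{k+1}}\|\nuj_k\|^2,
\]
where I have used convexity of $f$. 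The second-moment term is handled by the usual co-coercivity identity $\|\nabla f_i(x)-\nabla f_i(x^*)\|^2 \le 2L(f_i(x)-f_i(x^*)-\langle\nabla f_i(x^*),x-x^*\rangle)$, expanding $\nuj_k$ around $x^*$ and applying a three-way Young's inequality to get a bound of the form $\alpha L (f(\xj_k)-f(x^*)) + \beta L (f(\td{x}_{j-1})-f(x^*)) + \gamma\|g_j - \nabla f_{\I_j}(x^*)\|^2 + \gamma'\|h_j\|^2$, where $h_j = \nabla f_{\I_j}(x^*)$. The cross term $-2\eta\langle\delta_j,\xj_k-x^*\rangle$ is the new obstacle (previously mis-handled in the earlier version, per the footnote); I would split it as $\langle\delta_j,\xj_k-\td{x}_{j-1}\rangle + \langle\delta_j,\td{x}_{j-1}-x^*\rangle$ so that the second summand has mean zero under $\E_{\I_j}$, and then tame the first using Young's inequality, calibrating the constants so the coefficients eventually fit into $(1-13\eta L)$.

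The heart of the argument is then the geometric-distribution aggregation. Because $N_j\sim\mathrm{Geom}(\tfrac{B}{B+1})$, one has the identity
\[
  \E_{N_j}\sum_{k=0}^{N_j-1} a_k = B\,\E_{N_j} a_{N_j}
\]
for any bounded sequence $a_k$ (measurable w.r.t.\ the past). Summing the one-step recursion from $k=0$ to $N_j-1$, taking $\E_{N_j}$, and applying this identity converts $\sum_{k<N_j}(f(\xj_k)-f(x^*))$ into $B\,\E(f(\td{x}_j)-f(x^*))$ since $\td{x}_j=\xj_{N_j}$. The boundary terms collapse to $V_0 - \E V_{N_j}\ge -\E V_{N_j}$, giving a clean bound of the form
\[
  2\eta B\,\Ej(f(\td{x}_j)-f(x^*)) + \Ej V_{N_j} \le V_0 + (\text{noise depending on } \|\delta_j\|^2, \|h_j\|^2, f(\td{x}_{j-1})-f(x^*)).
\]

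Finally I would take $\E$ over $\I_j$, invoking Lemma~\ref{lem:sg_optim_var} to bound $\E\|h_j\|^2\le \H\cdot I(B<n)/B$ and, by the same sampling-without-replacement argument applied at $\td{x}_{j-1}$, bounding $\E\|a_j\|^2$ (where $g_j = a_j + h_j$ with $a_j = \nabla f_{\I_j}(\td{x}_{j-1})-\nabla f_{\I_j}(x^*)$) by something proportional to $L\,\E(f(\td{x}_{j-1})-f(x^*))$ via co-coercivity. Moving that piece to the left, we get a recursion of the form
\[
  2\eta B(1-13\eta L)\,\E(f(\td{x}_j)-f(x^*)) + \E V_{N_j} \le 4\eta L B\,\E(f(\td{x}_{j-1})-f(x^*)) + \E\|\td{x}_{j-1}-x^*\|^2 + 9\eta^2 B\,\H\cdot I(B<n).
\]
Summing over $j=1,\dots,T$ telescopes the $V$ terms (using $\E V_{N_j} = \E\|\td{x}_j-x^*\|^2$), dividing by $T$ and applying Jensen's inequality $\E(f(\bar{x}_T)-f(x^*))\le\frac{1}{T}\sum_j\E(f(\td{x}_j)-f(x^*))$ gives the stated bound. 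The main obstacle, as noted, is calibrating the Young's inequality constants in the bias and second-moment bounds so that they aggregate to exactly the $13\eta L$ and $9\eta\H/2$ factors in the statement while preserving non-negativity of the key coefficients.
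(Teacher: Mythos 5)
Your plan is essentially the paper's own proof: a one-step expansion of the SGD update, the geometric-distribution identity (your $\E\sum_{k=0}^{N_j-1}a_k = B\,\E a_{N_j}$ is exactly the content of Lemma~\ref{lem:geom}), the mean-zero split $\langle e_j,\xj_k-x^*\rangle=\langle e_j,\xj_k-\xj_0\rangle+\langle e_j,\xj_0-x^*\rangle$ with the second term killed by $\E_{\I_j}$ (Lemma~\ref{lem:dual}), the co-coercivity bound on $\E_{i_k}\|\nuj_k\|^2$ (Lemma~\ref{lem:nuknorm}), the without-replacement variance bound $\E\|e_j\|^2\le \frac{4L}{B}\E(f(\td{x}_{j-1})-f(x^*))+\frac{2\H}{B}$ on $\{B<n\}$ (Lemma~\ref{lem:ej}), and finally telescoping plus Jensen. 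The constants can indeed be calibrated to $13\eta L$ and $\tfrac{9}{2}\eta\H$ along these lines.

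The one place your sketch is genuinely thin is the displacement term left over after Young's inequality on the bias cross-term. Once you write $2\eta B\,\E\langle e_j,\td{x}_j-\td{x}_{j-1}\rangle \le 2\eta^2 B^2\,\E\|e_j\|^2+\tfrac12\E\|\td{x}_j-\td{x}_{j-1}\|^2$, the quantity $\E\|\td{x}_j-\td{x}_{j-1}\|^2$ is not one of the objects already in your recursion, and you cannot simply bound it by $2\E\|\td{x}_j-x^*\|^2+2\E\|\td{x}_{j-1}-x^*\|^2$ without destroying the telescoping of the distance terms. The paper handles this by running the same duality-gap lemma a second time with $u=\td{x}_{j-1}$ (Corollary~\ref{cor:dual_first}); because the resulting bound on $X=\E\|\td{x}_j-\td{x}_{j-1}\|^2$ itself contains a $\sqrt{X}$ term from the bias, one then needs the elementary self-bounding step $aX\le b\sqrt{X}+c\Rightarrow aX\le b^2/a+2c$ (Lemma~\ref{lem:quad}) to close it, after which the displacement is expressed in terms of $\E(f(\td{x}_{j-1})-f(x^*))$, $\E(f(\td{x}_j)-f(x^*))$, $\E\langle\nabla f(\td{x}_j),\td{x}_j-x^*\rangle$, and $\E\|e_j\|^2$. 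You should add this ingredient explicitly; everything else in your outline goes through as stated.
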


Note that the bound in Theorem \ref{thm:SCSG_bound} can be simplified as $O\lb \frac{\Delta_{f}}{T} + \frac{\Delta_{x}}{T\etaj B} + \etaj \H\rb$ while the bound in Theorem \ref{thm:SVRG_bound} can be simplified as $O\lb \frac{\etaj \Delta_{f}}{T} + \frac{\Delta_{x}}{T\etaj n}\rb$. Despite the more stringent requirement on $\eta$ ($\eta < \frac{1}{13L}$), these two bounds have two qualitative difference: 1) SCSG has an extra term $O(\etaj \H)$, which characterizes the sampling variance of the mini-batch gradients; 2) SCSG loses an $\eta$ in the first term, which is due to the bias of $\nu_{k}^{(j)}$. In fact, recall the definition of $\E_{i_{k}}$ at the beginning of Section \ref{sec:notation}, a simple calculation shows that 
\[\E_{i_{k}}\nuj_{k} = \nabla f(\xj_{k}) + \lb \nabla f_{\Ij}(\xj_{0}) - \nabla f(\xj_{0})\rb\]
which does not equal $\nabla f(\xj_{k})$ in general. Most novelty of our analysis lies in dealing with the extra bias. Fortunately, we found that the extra terms do not worsen the complexity by scaling $\etaj$ as $\frac{1}{B}$. 

\begin{corollary}\label{cor:SCSG_complexity}
Assume \textbf{A}1 holds. Set 
\[B = \left\lceil \frac{\gamma \H}{L\eps}\wedge n\right\rceil  \quad \mbox{ and }\quad \etaj = \frac{\theta}{L B},\]
Assume that $\displaystyle\frac{\theta}{1 - 13\theta / B}\cdot \frac{9}{2\gamma} < 1$, then with the output $\bar{x}_{T}$,
\begin{equation}\label{eq:SCSG_f}
\E \comp(\eps) = O\lb \frac{\H \Delta_{x}}{\eps^{2}}\wedge \frac{nL\Delta_{x}}{\eps}\rb.
\end{equation}
\end{corollary}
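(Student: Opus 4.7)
The plan is to plug the specified choices of $B$ and $\eta$ into Theorem \ref{thm:SCSG_bound}, bound the two summands separately, choose the number of stages $T$ just large enough that the first summand is $\le \eps$, and then compute the total expected cost as $T$ times the average per-stage cost.

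First I would substitute $\eta = \theta/(LB)$ so that $\eta L = \theta/B$ and $\eta B = \theta/L$. With these identities the Theorem \ref{thm:SCSG_bound} bound decomposes as
\[
\E(f(\bar{x}_T) - f(x^*)) \;\le\; \underbrace{\frac{1}{T(1 - 13\theta/B)}\!\left(2\Delta_f + \frac{L\Delta_x}{2\theta}\right)}_{(\mathrm{I})} \;+\; \underbrace{\frac{9\theta}{2 L B(1 - 13\theta/B)}\;\H\,I(B < n)}_{(\mathrm{II})}.
\]
For (II), I use the definition $B \ge \gamma \H/(L\eps)$, so on the event $B < n$ one has $\H/(LB) \le \eps/\gamma$, giving $(\mathrm{II}) \le \frac{9\theta}{2\gamma(1 - 13\theta/B)}\,\eps$. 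The standing assumption $\frac{9\theta}{2\gamma(1-13\theta/B)} < 1$ is exactly what makes this strictly smaller than $\eps$ (and when $B=n$ the indicator kills the term). So it suffices to force $(\mathrm{I}) = O(\eps)$, which, recalling that $\Delta_f \le \tfrac{L}{2}\Delta_x$ under \textbf{A}1, happens as soon as
\[
T \;=\; \Theta\!\left(\frac{L\Delta_x}{\eps}\right).
\]

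Next I would compute the expected cost per stage. Each stage performs $B$ gradient evaluations to form $g_j$ plus $N_j$ inner updates (each of cost $O(1)$ by the IFO convention). Since $N_j \sim \mathrm{Geom}(B/(B+1))$, identity \eqref{eq:exp_N} gives $\E N_j = B$, so the average cost per stage is $2B$. Combining with the bound on $T$:
\[
\E \comp(\eps) \;=\; O(T\cdot B) \;=\; O\!\left(\frac{L\Delta_x}{\eps}\cdot B\right).
\]

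Finally I would split into the two cases built into the definition $B = \lceil \gamma\H/(L\eps) \wedge n\rceil$. If $\gamma\H/(L\eps) \ge n$, then $B = n$ and the cost becomes $O(nL\Delta_x/\eps)$. Otherwise $B = O(\H/(L\eps))$ and the cost becomes $O\!\left(\frac{L\Delta_x}{\eps}\cdot \frac{\H}{L\eps}\right) = O(\H\Delta_x/\eps^2)$. Taking the minimum yields the claimed $O(\H\Delta_x/\eps^2 \wedge nL\Delta_x/\eps)$.

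There is no real obstacle here beyond bookkeeping; the work has already been done in Theorem \ref{thm:SCSG_bound}, and this corollary is essentially an exercise in balancing the two terms. The only mildly delicate point is verifying that the condition on $\theta$ and $\gamma$ produces a strict contraction in the noise term even for the smallest allowed $B$ (i.e.\ after the ceiling), which is why the hypothesis is stated with $13\theta/B$ rather than just $13\theta$.
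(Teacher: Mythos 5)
Your proposal is correct and follows essentially the same route as the paper: the paper first proves an intermediate Corollary \ref{cor:SCSG_complexity_general} for a general stepsize $\eta=\alpha/L$ (bounding the noise term by $\phi\eps<\eps$, taking $T=O(D_1/((1-\phi)\eps))$, and multiplying by the per-epoch cost $O(B)$), then specializes $\alpha=\theta/B$, which is exactly your direct substitution followed by the same two-case split on $B$. The minor points you flag (the ceiling, the strictness of the contraction in the noise term, $\Delta_f\le\frac{L}{2}\Delta_x$) are handled identically in the paper.
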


Corollary \ref{cor:SCSG_complexity} shows that SCSG is never worse than SGD and SVRG (with constant stepsize scaled as $\frac{1}{L}$). Compared with SGD whose complexity is $O\lb\frac{\H^{*}\Delta_{x}}{\eps^{2}}\rb$ \cite{mini_batch_SGD} where 
\[\H^{*} = \sup_{x}\frac{1}{n}\sum_{i=1}^{n}\|\nabla f_{i}(x) - \nabla f(x)\|^{2},\]
SCSG has a factor $\H$ which is strictly smaller than $\H^{*}$. It will be shown in Section \ref{sec:Hf}, $\H$ can be much smaller than $\H^{*}$ even in the case where $\H^{*} = \infty$. 

Next we consider the strongly convex case. Similarly, we start from deriving a bound for the sub-optimality of the output $\td{x}_{T}$.
\begin{theorem}\label{thm:SCSG_bound_sc}
Assume that 
\[\phi \triangleq 2 - 8\etaj L - (1 + 13\etaj L )(1 + \mu\etaj B) > 0\]
Under the assumption \textbf{A}1 and assumption \textbf{A}2 with $\mu > 0$, the last iterate $\td{x}_{T}$ satisfies that 
\begin{equation}\label{eq:SCSG_sc_F}
\E (f(\td{x}_{j}) - f(x^{*}))\le \frac{2\Delta_{f}}{\mu\etaj B(1 + \mu\etaj B)^{T}} + \frac{9\H \cdot I(B < n)}{2\mu B}.
\end{equation}
and 
\begin{equation}\label{eq:SCSG_sc_X}
\E \|\td{x}_{j} - x^{*}\|^{2}\le \frac{4}{\mu}\frac{\Delta_{f}}{(1 + \mu\etaj B)^{T}} + \frac{9\etaj \H \cdot I(B < n)}{\mu},
\end{equation}

\end{theorem}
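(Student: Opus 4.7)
The plan is to derive a one-epoch contraction by combining a per-inner-step inequality with a geometric-stopping-time identity, and then iterate over the $T$ epochs. Fix epoch $j$ and condition on $\td{x}_{j-1}$ and on the mini-batch $\I_{j}$; set $e_{j} := g_{j} - \nabla f(\xj_{0})$, the bias of the control variate, so that $\E_{i_{k}}\nuj_{k-1} = \nabla f(\xj_{k-1}) + e_{j}$. Expanding $\|\xj_{k} - x^{*}\|^{2} = \|\xj_{k-1} - \eta\nuj_{k-1} - x^{*}\|^{2}$ and taking $\E_{i_{k}}$ produces a cross term and a second-moment term. For the cross term $-2\eta\la \nabla f(\xj_{k-1}) + e_{j},\xj_{k-1} - x^{*}\ra$ I combine strong convexity, $\la\nabla f(\xj_{k-1}),\xj_{k-1} - x^{*}\ra\ge (f(\xj_{k-1})-f(x^{*}))+\frac{\mu}{2}\|\xj_{k-1}-x^{*}\|^{2}$, with a Young-type split on the bias piece. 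For the second-moment term I use the decomposition $\nuj_{k-1} = [\nabla f_{i_{k}}(\xj_{k-1}) - \nabla f_{i_{k}}(x^{*})] - [\nabla f_{i_{k}}(\xj_{0}) - \nabla f_{i_{k}}(x^{*})] + g_{j}$ together with the smoothness bound $\E_{i}\|\nabla f_{i}(x) - \nabla f_{i}(x^{*})\|^{2}\le 2L(f(x)-f(x^{*}))$ to obtain
\[\E_{i_{k}}\|\nuj_{k-1}\|^{2}\le 4L(f(\xj_{k-1})-f(x^{*})) + 4L(f(\xj_{0}) - f(x^{*})) + 2\|e_{j}\|^{2}.\]

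Next, I would exploit the geometric stopping time via the identity
\[\E_{N_{j}}a_{N_{j}} - a_{0} = B\cdot \E_{N_{j}}(a_{N_{j}+1} - a_{N_{j}}),\]
which follows by a single index-shift from $N_{j}\sim\mathrm{Geom}(B/(B+1))$ having mean $B$. Applying this identity to $a_{k} = \|\xj_{k} - x^{*}\|^{2}$ in the per-step bound above, and then taking $\E_{\I_{j}}$ (which by Lemma~\ref{lem:sg_optim_var} yields $\E_{\I_{j}}\|e_{j}\|^{2}\le \H\cdot I(B<n)/B$), collapses the inner-loop telescoping into a single one-epoch inequality involving $\E\|\td{x}_{j} - x^{*}\|^{2}$, $\E(f(\td{x}_{j}) - f(x^{*}))$, $\E\|\td{x}_{j-1} - x^{*}\|^{2}$, $\E(f(\td{x}_{j-1}) - f(x^{*}))$, and the noise $\H\cdot I(B<n)$. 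With the joint Lyapunov choice $V_{j} := \E\|\td{x}_{j} - x^{*}\|^{2} + 2\eta B\,\E(f(\td{x}_{j}) - f(x^{*}))$, the scalar inequality $\phi = 2 - 8\eta L - (1+13\eta L)(1+\mu\eta B)>0$ is exactly what is needed for the various coefficients to combine into the clean one-epoch contraction
\[V_{j}\le \frac{V_{j-1}}{1+\mu\eta B} + \frac{\td{c}\,\eta\,\H\cdot I(B<n)}{1+\mu\eta B}\]
for a constant $\td{c}$ that the proof tracks explicitly.

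Iterating this recursion for $j=1,\ldots,T$ and summing the resulting geometric series in the noise gives $V_{T}\le V_{0}/(1+\mu\eta B)^{T} + O(\H\cdot I(B<n)/(\mu B))$; combined with the initial estimate $V_{0}\le \Delta_{x} + 2\eta B\,\Delta_{f}\le 4\Delta_{f}/\mu$ (using $\Delta_{x}\le 2\Delta_{f}/\mu$ from Assumption~\textbf{A}2 together with the smallness of $\mu\eta B$ implied by $\phi>0$), extracting the two coordinates of $V_{T}$ gives \eqref{eq:SCSG_sc_X} directly and \eqref{eq:SCSG_sc_F} after dividing by $2\eta B$. The main obstacle, and where the bulk of the technical work lies, is the bias $e_{j}$: unlike classical SVRG, it couples the outer-loop and inner-loop randomness, and a naive Young's split of $\la e_{j},\xj_{k-1}-x^{*}\ra$ burns too much of the $\mu/2$ coming from strong convexity, producing only the weaker factor $1+\mu\eta B/2$. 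Threading $e_{j}$ jointly through both coordinates of $V_{j}$, and carefully balancing the Young constant in the cross term against the $\eta^{2}\|e_{j}\|^{2}$ contribution coming from the second-moment bound, is what ultimately yields the sharp contraction $1/(1+\mu\eta B)$; the scalar condition $\phi>0$ records precisely this balance.
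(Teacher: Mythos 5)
Your overall architecture---a one-epoch inequality from a per-step expansion plus the geometric-stopping identity, a joint Lyapunov function contracted by $1/(1+\mu\etaj B)$ under the scalar condition $\phi>0$, geometric-series summation of the noise, and the initial estimate $V_{0}\le 4\Delta_{f}/\mu$---is exactly the paper's (Theorem \ref{thm:dual_second} followed by the telescoping in \eqref{eq:SCSG_bound_sc_4}), and your final extraction of the two bounds from the two coordinates of $V_{T}$ is correct. The gap is in the one step you yourself flag as the crux: the bias term. You propose to control $-2\etaj\la \ej,\xj_{k-1}-x^{*}\ra$ by a Young split whose quadratic part is absorbed by strong convexity, with the constant ``carefully balanced.'' No such balance exists. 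Writing the split as $\etaj\beta^{-1}\|\ej\|^{2}+\etaj\beta\|\xj_{k-1}-x^{*}\|^{2}$, preserving the contraction factor $1+\mu\etaj B$ up to constants forces $\beta\lesssim\mu$, and then the accumulated noise after summing the geometric series is of order $\H/(\beta\mu B)\gtrsim \H/(\mu^{2}B)=(\mu\etaj B)^{-1}\cdot\etaj\H/\mu$, which exceeds the claimed $9\etaj\H/\mu$ in \eqref{eq:SCSG_sc_X} by the factor $1/(\mu\etaj B)$; since $\phi>0$ forces $\mu\etaj B<1$, this is a genuine loss, not a constant. Routing the term through the $f$-coordinate of $V_{j}$ does not help, because $f(\xj_{k-1})-f(x^{*})\ge\frac{\mu}{2}\|\xj_{k-1}-x^{*}\|^{2}$ only converts the problem back into the same one.

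The paper avoids Young's inequality on this inner product altogether. Because $\E_{\I_{j}}\ej=0$ and $\xj_{0}=\td{x}_{j-1}$ is independent of $\I_{j}$, the offending term satisfies $\E\la \ej,\xj_{N_{j}}-x^{*}\ra=\E\la \ej,\xj_{N_{j}}-\xj_{0}\ra$: the reference point is recentered at $\xj_{0}$ at no cost (Lemma \ref{lem:dual}). The residual is then bounded by Cauchy--Schwarz as $2\etaj B\sqrt{\E\|\ej\|^{2}}\sqrt{\E\|\td{x}_{j}-\td{x}_{j-1}\|^{2}}$, and the displacement $\E\|\td{x}_{j}-\td{x}_{j-1}\|^{2}$ is controlled by a second application of the same duality lemma with $u=\td{x}_{j-1}$ (Corollary \ref{cor:dual_first}), which yields a quadratic self-bounding inequality resolved by Lemma \ref{lem:quad}. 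That two-stage mechanism---recentering via the conditional mean-zero property, then a separate displacement bound---is the missing idea; without it the noise floors $9\H/(2\mu B)$ and $9\etaj\H/\mu$ in \eqref{eq:SCSG_sc_F}--\eqref{eq:SCSG_sc_X} are not attainable by your argument. (A lesser issue: your second-moment bound drops the $2\|\nabla f(\xj_{k-1})\|^{2}$ contribution from $\|\E_{i_{k}}\nuj_{k-1}\|^{2}$, which the paper retains and later absorbs via co-coercivity of $f$; this affects the constants $8$ and $13$ appearing in $\phi$ but is minor by comparison.)
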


Unlike R-SVRG which guarantees the convergence of $\E f(\td{x}_{T}) - f(x^{*})$ and $\E \|\td{x}_{T} - x^{*}\|^{2}$ simultaneously, SCSG needs to use different batch sizes for the two purposes since the second term in \eqref{eq:SCSG_sc_F} and that in \eqref{eq:SCSG_sc_X} are in different scales. The following two corollaries show the setups for these two purposes.

\begin{corollary}\label{cor:SCSG_complexity_comp_last}
Assume \textbf{A}1 and \textbf{A}2 hold. Set 
\[B = \left\lceil \frac{\gamma (\H\vee L\eps)}{\mu\eps}\wedge n\right\rceil  \quad \mbox{ and }\quad \etaj = \frac{\theta}{\mu (B\vee \gamma\kappa)}.\]
Assume that $\theta \le \frac{\gamma}{22}, \gamma > \frac{9}{2}$, then with the output $\bar{x}_{T}$,
\begin{equation}\label{eq:SCSG_sc_f}
\E \comp(\eps) = O\lb \lb\frac{\H}{\mu \eps}\wedge n +  \kappa\rb\log \lb\frac{\Delta_{f}}{\eps}\cdot\frac{n\vee \kappa}{n}\rb\rb.
\end{equation}
\end{corollary}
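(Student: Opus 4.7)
The plan is to invoke Theorem~\ref{thm:SCSG_bound_sc} with the prescribed $B$ and $\eta$, bound the two terms on the right-hand side of \eqref{eq:SCSG_sc_F} separately, and then choose $T$ just large enough to drive the sum below $\eps$.

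I would first verify the hypothesis $\phi>0$. Since $B\vee\gamma\kappa\ge B$ and $B\vee\gamma\kappa\ge\gamma\kappa$, we have $\mu\eta B=\theta B/(B\vee\gamma\kappa)\le\theta$ and $\eta L=\theta L/[\mu(B\vee\gamma\kappa)]\le\theta/\gamma$. Expanding
\[
\phi = 1 - 21\eta L - \mu\eta B - 13(\eta L)(\mu\eta B) \ge 1 - 21\theta/\gamma - \theta - 13\theta^{2}/\gamma,
\]
and using $\theta\le\gamma/22$, one checks that $\phi$ is bounded away from $0$. Next, for the variance term, I would split on whether $B=n$ or $B<n$. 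If $B=n$ the indicator kills the term; if $B<n$, the definition of $B$ gives $B\ge\gamma\H/(\mu\eps)$, so the term is at most $9\eps/(2\gamma)<\eps$ by the assumption $\gamma>9/2$. Thus in both cases this term contributes at most a constant times $\eps$.

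For the optimization term $2\Delta_f/[\mu\eta B(1+\mu\eta B)^{T}]$, I would further split on whether $B\ge\gamma\kappa$. Observe that if $B<n$ then $B\ge\gamma(L\eps)/(\mu\eps)=\gamma\kappa$ automatically, so $B<\gamma\kappa$ forces $B=n$. In the ``fast'' regime $B\ge\gamma\kappa$ we have $\mu\eta B=\theta$, so $T=O(\log(\Delta_f/\eps))$ suffices and the total cost is
\[
\E\comp(\eps)=O(BT)=O\!\left(\left(\tfrac{\H}{\mu\eps}\vee\kappa\right)\wedge n\cdot\log\tfrac{\Delta_f}{\eps}\right).
\]
In the ``slow'' regime $B=n<\gamma\kappa$, we have $\mu\eta B=\theta n/(\gamma\kappa)<\theta$, and using $\log(1+x)\ge x/2$ for $x\in(0,1]$ one obtains $T=O\!\big((\gamma\kappa/(\theta n))\log(\Delta_f\kappa/(n\eps))\big)$, so that $\E\comp(\eps)=O(nT)=O(\kappa\log(\Delta_f\kappa/(n\eps)))$.

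Finally I would unify the two regimes. Since $B\ge\gamma\kappa$ implies $n>\kappa$ (hence $n\vee\kappa=n$ and $\log(\Delta_f(n\vee\kappa)/(n\eps))=\log(\Delta_f/\eps)$) while the slow regime gives $(n\vee\kappa)/n=\kappa/n$, the combined bound is exactly
\[
\E\comp(\eps)=O\!\left(\left(\tfrac{\H}{\mu\eps}\wedge n+\kappa\right)\log\!\left(\tfrac{\Delta_f}{\eps}\cdot\tfrac{n\vee\kappa}{n}\right)\right),
\]
using that the per-epoch expected cost is $B+\E N_j=B+B=2B$ by \eqref{eq:exp_N}. The main obstacle I anticipate is the slow regime $B=n<\gamma\kappa$: the contraction factor $\mu\eta B$ degrades from $\theta$ to $\theta n/(\gamma\kappa)$, and one must be careful that this degradation shows up only as the factor $\kappa/n$ inside the logarithm, not as a factor multiplying the $\kappa$ term. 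Verifying this cleanly requires the slightly delicate case-splitting above, but no step involves more than elementary manipulations of the bound in Theorem~\ref{thm:SCSG_bound_sc}.
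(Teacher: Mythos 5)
Your proposal is correct and follows essentially the same route as the paper's proof: verify $\phi>0$ from $\mu\eta B\le\theta$ and $\eta L\le\theta/\gamma$, apply \eqref{eq:SCSG_sc_F}, bound the variance term by $9\eps/(2\gamma)<\eps$ via $B\ge\gamma\H/(\mu\eps)$ when $B<n$, and extract $T$ from the geometric contraction using the key observation that $B<n$ forces $B\ge\gamma\kappa$. The only cosmetic difference is that you split explicitly into the regimes $B\ge\gamma\kappa$ and $B=n<\gamma\kappa$, whereas the paper absorbs both at once through the bound $1/(\xi\wedge 1)=O(1+\kappa/B)=O((n\vee\kappa)/n)$; the resulting estimates are identical.
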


\begin{corollary}\label{cor:SCSG_complexity_comp_last_iter}
Under the same settings of Corollary \ref{cor:SCSG_complexity_comp_last} except that setting
\[B = \left\lceil \frac{\gamma (\H\vee L\eps)}{\mu^{2}\eps}\wedge n\right\rceil\]
it holds that 
\begin{equation}\label{eq:SCSG_sc_x}
\E \comp_{x}(\eps) = O\lb \lb\frac{\H}{\mu^{2} \eps}\wedge n +  \kappa\rb\log \lb\frac{\Delta_{f}}{\mu\eps}\rb\rb.
\end{equation}
\end{corollary}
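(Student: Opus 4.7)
The plan is to apply Theorem \ref{thm:SCSG_bound_sc}, using equation (\ref{eq:SCSG_sc_X}) in place of (\ref{eq:SCSG_sc_F}), and tune $T$ so that each of the two terms on the right-hand side is at most $\eps/2$. First I would verify the precondition $\phi > 0$: since $\etaj L \le \theta/\gamma \le 1/22$ and $\mu\etaj B = \theta\min(1, B/(\gamma\kappa)) \le \theta \le \gamma/22$, substituting into $\phi = 2 - 8\etaj L - (1+13\etaj L)(1+\mu\etaj B)$ yields $\phi > 0$ on the admissible $(\theta,\gamma)$ range.

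Next I would bound the sampling-bias term $9\etaj\H \cdot I(B<n)/\mu = 9\theta\H/[\mu^2(B\vee\gamma\kappa)]$. When $B<n$, the choice of $B$ forces $B \ge \gamma\H/(\mu^2\eps)$ (or $\gamma L/\mu^2$ when $\H < L\eps$, which still implies $\H/(\mu^2 B) \le \eps/\gamma$), so this term is at most $9\theta\eps/\gamma \le \eps/2$ under $\theta\le\gamma/18$. The conceptual point relative to Corollary \ref{cor:SCSG_complexity_comp_last} is that the bias term in (\ref{eq:SCSG_sc_X}) carries an extra factor of $\etaj$; since $\etaj = O(1/(\mu B))$, this effectively requires $B$ to scale as $\H/(\mu^2\eps)$ rather than $\H/(\mu\eps)$, which is precisely the only tuning difference between the two corollaries.

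To drive the decaying term $4\Delta_f/[\mu(1+\mu\etaj B)^T]$ below $\eps/2$, I would set $T \ge \log(8\Delta_f/(\mu\eps))/\log(1+\mu\etaj B)$. Because $\mu\etaj B = \theta\min(1, B/(\gamma\kappa))$, the inequality $\log(1+x) \ge x/2$ on $(0,1]$ gives $\log(1+\mu\etaj B) = \Omega(\min(1, B/\kappa))$, so $T = O\bigl((1\vee \kappa/B)\log(\Delta_f/(\mu\eps))\bigr)$. The expected per-epoch cost is $B + \E N_j = O(B)$, since $\E N_j = B$ for $N_j\sim\mathrm{Geom}(B/(B+1))$; multiplying gives $\E\comp_x(\eps) = O((B+\kappa)\log(\Delta_f/(\mu\eps)))$. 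Substituting $B = O(\H/(\mu^2\eps)\wedge n + \kappa)$, with the $L\eps$ safeguard in $\H\vee L\eps$ absorbed into the additive $\kappa$ contribution, delivers the stated complexity.

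The main obstacle is the regime split when estimating $\log(1+\mu\etaj B)$: in the slow regime $B<\gamma\kappa$ the logarithm is small and $T$ must grow proportionally to $\kappa/B$, so one must verify carefully that the product $TB$ still respects the claimed $(B+\kappa)\log(\cdot)$ scaling rather than blowing up to $\kappa^2/B$ or similar. Once that check is in place, the remainder essentially mirrors the proof of Corollary \ref{cor:SCSG_complexity_comp_last}, with the extra factor of $\mu$ in the denominator arising solely from the $\etaj$ prefactor in the bias term of (\ref{eq:SCSG_sc_X}) compared with (\ref{eq:SCSG_sc_F}).
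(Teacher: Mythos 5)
Your argument is correct and follows essentially the same route as the paper's proof: apply \eqref{eq:SCSG_sc_X}, use the enlarged $B$ (scaled by $1/\mu^{2}$ rather than $1/\mu$) to push the bias term $9\eta\H/\mu\le 9\theta\H/(\mu^{2}B)$ below a constant fraction of $\eps$, solve for $T$ via $1/\log(1+\mu\eta B)=O(1\vee \kappa/B)$, and multiply by the $O(B)$ per-epoch cost. The only differences are cosmetic (an $\eps/2+\eps/2$ split versus the paper's $(1-9\theta/\gamma)\eps+(9\theta/\gamma)\eps$), and you correctly flag the one point that needs care, namely that $TB$ stays $O((B+\kappa)\log(\cdot))$ in the regime $B<\gamma\kappa$.
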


For large $\eps$, ignoring the log-factors, the complexity results \eqref{eq:SCSG_sc_f} and \eqref{eq:SCSG_sc_x} can be simplified as $\td{O}\lb\frac{\H}{\mu \eps}\rb$ and $\td{O}\lb \frac{\H}{\mu^{2}\eps}\rb$. By contrast, the complexity results of SGD are $O\lb\frac{\H^{*}}{\mu \eps}\rb$ and $O\lb\frac{\H^{*}}{\mu^{2}\eps}\rb$, respectively \cite{Rakhlin12}. Thus, SCSG is not worse than SGD up to a log-factor and could significantly outperform SGD when $\H <\!\!<\H^{*}$ in terms of the theoretical complexity. For small $\eps$, SCSG is equivalent to SVRG provided $\kappa = O(n)$, which is usually the case in practice. 

\section{More Details on $\H(f)$}\label{sec:Hf}
The problem \eqref{eq:obj_generic} we considered in this paper is a finite-sum optimization. It is popular to view it under the framework of stochastic approximation (SA) \cite{robbins51} by rewriting $f(x)$ as $\E_{\mathcal{J}}f_{\mathcal{J}}(x) $ where $\mathcal{J}$ is a uniform index on $[n]$ and setting the first-order oracle as drawing $\nabla f_{\mathcal{J}}(x)$ in every step. Then it is necessary to assume that $\H^{*}(f) = \E_{\mathcal{J}} \|\nabla f_{\mathcal{J}}(x) - \nabla f(x)\|^{2}$, as the variance of the oracle output, is uniformly bounded over the domain. However, one should expect that  the finite-sum optimization is strictly easier than the general SA due to the special structure. This paper provides an affirmative answer by introducing a new measure $\H(f)$ to characterize the difficulty of a generic finite-sum optimization problem and developing the SCSG algorithm to adapt to this measure. 

Before delving into the details of $\H(f)$, we briefly review the existing difficulty measures for problem \eqref{eq:obj_generic}.  To the best of our knowledge, the existing measures fall into four categories: \emph{initialization, curvature, gradient regularity} and \emph{heterogeneity}; see Table \ref{tab:difficulty} for corresponding measures. The first three categories of measures are used in almost all types of problems while the heterogeneity measures are specific to the form \eqref{eq:obj_generic}. To illustrate the importance of heterogeneity, consider a  toy example  where $f_{i}(x) = (x - b_{i})^{2}$ with $b_{1}, \ldots, b_{n}\in \R$. Now consider two classes of problem where the first class assumes the prior knowledge that all $b_{i}$'s are equal and the second class assumes that $b_{i}$'s are all free parameters. A simple calculation shows that $\Delta_{x}, \Delta_{f}, \kappa, G^{2}$ are all equal for both classes of problems. However, it is clear that the second class of problems are much easier using stochastic gradient methods since each single function has an exactly the same behavior as the global function. In fact, $\mathcal{G}$ and $\H^{*}$ are zero for the second class of problems while are non-zero for the first class. This suggests that heterogeneity between single functions and the global function increases the difficulty of problem \eqref{eq:obj_generic}. 

\begin{table}
\caption{Existing difficulty measures of problem \eqref{eq:obj_generic} in four categories}\label{tab:difficulty}
\begin{center}
  \begin{tabular}{l|l}
    \toprule
    Categories & Measures\\
    \midrule
    Initialization & $\Delta_{x} = \|x_{0} - x^{*}\|^{2}, \Delta_{f} = f(x_{0}) - f(x^{*})$\\
    Curvature & $\kappa = L / \mu$ (when $\mu > 0$)\\
    Gradient Regularity & $G^{2} = \max_{i}\sup_{x}\|\nabla f_{i}(x)\|^{2}$\\
    Heterogeneity & $\mathcal{G} = \sup_{x}\max_{i}\|\nabla f_{i}(x)\|/\|\nabla f(x)\|$, \\
    & $\H^{*}  = \sup_{x}\frac{1}{n}\sum_{i=1}^{n}\|\nabla f_{i}(x) - \nabla f(x)\|^{2}$\\
    \bottomrule
  \end{tabular}
\end{center}
\end{table}

The first attempt to describe the heterogeneity is through an unrealistic condition, called \emph{strong growth condition}(\cite{schmidt12}), which requires 
\begin{equation}\label{eq:strong_growth}
\max_{i}\|\nabla f_{i}(x)\|\le \mathcal{G}\cdot\|\nabla f(x)\|.
\end{equation}
Under \eqref{eq:strong_growth}, \citet{schmidt12} proves that the stochastic gradient methods have the same convergence rate as the full gradient methods. However, \eqref{eq:strong_growth} is unrealistic since it implies for any minimizer $x^{*}$ of $f$, $x^{*}$ is the stationary point of all individual loss functions. 

Later \citet{mini_batch_SGD} proposed a more realistic measure 
\begin{equation}\label{eq:Hstar}
\H^{*}= \sup_{x}\frac{1}{n}\sum_{i=1}^{n}\|\nabla f_{i}(x) - \nabla f(x)\|^{2}
\end{equation}
and proved that (mini-batch) SGD is adaptive to $\H^{*}$. The condition $\H^{*} < \infty$ is always weaker than assuming $\|\nabla f_{i}\|$ are uniformly bounded in that $\H^{*}\le G^{2}$. However, in many applications where the domain of $x$ is non-compact, $\H^{*} = \infty$. This can be observed even in our toy example when the domain of $x$ is $\R$. One might argue that a projection step may be involved to ensure the boundedness of the iterates. However this argument is quite weak in that 1) the right size of the set that is projected onto is unknown; 2) the projection step is rarely implemented in practice. Therefore, $\H^{*}$ is still not a desirable measure.

By contrast, our proposed measure $\H$ is well-behaved in most applications without awkward assumptions such as the bounded domain. Recall that 
\begin{equation}
  \label{eq:H}
  \H = \inf_{x^{*}\in \argmin f(x)}\frac{1}{n}\sum_{i=1}^{n}\|\nabla f_{i}(x^{*})\|^{2}.
\end{equation}
It can be viewed as a version of $\H^{*}$ which replaces the supremum by the value at a single point, when the optimum of $f(x)$ is unique. As a consequence, $\H\le \H^{*}$. In addition, when the strong growth condition \eqref{eq:strong_growth} holds, $\|\nabla f_{i}(x^{*})\| = 0$ for all $i$ and hence $\H^{*} = 0$. These simple facts show that $\H$ is strictly better than $\mathcal{G}$ and $\H^{*}$ as a measure of difficulty. We will show in the next two subsections that $\H$ can be controlled and estimated in almost all problems and is well-behaved in a wide range of applications.

\subsection{Bounding $\H(f)$ in General Cases}
Although being unrealistic, it is often assumed that $\|\nabla f_{i}(x)\|$ is uniformly bounded over the domain. This implies the boundedness of $\H$ directly and hence provides an example where the problem \eqref{eq:obj_generic} is ``easy''.
\begin{proposition}\label{prop:bound_H}
Let $G^{2}$ and $\H^{*}$ be defined in Table \ref{tab:difficulty}, then 
\[\H\le \min\left\{G^{2}, \H^{*}\right\}.\]
\end{proposition}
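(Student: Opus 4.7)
The plan is to prove the two inequalities $\H \le G^{2}$ and $\H \le \H^{*}$ separately, both of which follow directly from the definitions plus one elementary observation (the optimality condition $\nabla f(x^{*}) = 0$). Since $\H$ is defined as an infimum over the minimizer set, it suffices in each case to exhibit some $x^{*} \in \argmin f$ for which the averaged squared gradient norm is bounded by the target quantity; in fact the bound will hold pointwise for \emph{every} such $x^{*}$.

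For $\H \le G^{2}$: I would fix an arbitrary $x^{*} \in \argmin f(x)$ and estimate
\[
\frac{1}{n}\sum_{i=1}^{n}\|\nabla f_{i}(x^{*})\|^{2} \le \frac{1}{n}\sum_{i=1}^{n}\sup_{x}\|\nabla f_{i}(x)\|^{2} \le \max_{i}\sup_{x}\|\nabla f_{i}(x)\|^{2} = G^{2}.
\]
Taking the infimum over $x^{*}$ on the left preserves the inequality.

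For $\H \le \H^{*}$: The key observation is that by Assumption \textbf{A}1 each $f_{i}$ is convex and differentiable, so $f$ is differentiable and any minimizer $x^{*}$ of $f$ satisfies $\nabla f(x^{*}) = 0$. Therefore at such an $x^{*}$,
\[
\frac{1}{n}\sum_{i=1}^{n}\|\nabla f_{i}(x^{*})\|^{2} = \frac{1}{n}\sum_{i=1}^{n}\|\nabla f_{i}(x^{*}) - \nabla f(x^{*})\|^{2} \le \sup_{x}\frac{1}{n}\sum_{i=1}^{n}\|\nabla f_{i}(x) - \nabla f(x)\|^{2} = \H^{*}.
\]
Again taking the infimum over $x^{*}$ yields $\H \le \H^{*}$.

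There is no real obstacle here; the proposition is essentially a tautology once one notes that the gradient of $f$ vanishes at any minimizer. The only mildly delicate point is handling the infimum-versus-supremum asymmetry in the definitions, but since both bounds are derived pointwise for an arbitrary minimizer, this is automatic. Combining the two inequalities gives $\H \le \min\{G^{2}, \H^{*}\}$.
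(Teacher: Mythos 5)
Your proof is correct and matches the argument the paper intends (the paper treats this proposition as immediate from the definitions, noting only that $\H$ replaces the supremum in $\H^{*}$ by the value at a single point where $\nabla f(x^{*})=0$). Both of your bounds hold pointwise for every minimizer, so passing to the infimum is indeed automatic, and there is nothing further to add.
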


Surprisingly, $\H$ can be bounded even without any assumption other than \textbf{A}1 by using an arbitrary reference point.

\begin{proposition}\label{prop:general_H}
Under Assumption \textbf{A}1, for any $x\in \R^{d}$
\begin{equation}\label{eq:general_f} 
\H\le \frac{2}{n}\sum_{i=1}^{n} \lb\|\nabla f_{i}(x)\|^{2} + 2 L(f_{i}(x) - f_{i}(x^{*}))\rb
\end{equation}
\end{proposition}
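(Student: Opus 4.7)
The plan is to bound $\|\nabla f_i(x^*)\|^2$ pointwise in $i$ by combining a trivial triangle-type split with the standard co-coercivity bound for $L$-smooth convex functions, then average over $i$ and use the fact that $\nabla f(x^*) = 0$ to eliminate a cross term.

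First I would write, for any fixed $x$ and any minimizer $x^*$ of $f$, the identity $\nabla f_i(x^*) = \nabla f_i(x) - (\nabla f_i(x) - \nabla f_i(x^*))$ and apply $\|a+b\|^2 \le 2\|a\|^2 + 2\|b\|^2$ to obtain
\[
\|\nabla f_i(x^*)\|^2 \le 2\|\nabla f_i(x)\|^2 + 2\|\nabla f_i(x) - \nabla f_i(x^*)\|^2.
\]
Next I would invoke the well-known consequence of Assumption \textbf{A}1 that for convex $L$-smooth $f_i$,
\[
\|\nabla f_i(x) - \nabla f_i(x^*)\|^2 \le 2L\bigl(f_i(x) - f_i(x^*) - \langle \nabla f_i(x^*), x - x^*\rangle\bigr),
\]
which is a single-function statement and holds for any $i$ regardless of whether $x^*$ minimizes $f_i$. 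Substituting,
\[
\|\nabla f_i(x^*)\|^2 \le 2\|\nabla f_i(x)\|^2 + 4L\bigl(f_i(x) - f_i(x^*)\bigr) - 4L\langle \nabla f_i(x^*), x - x^*\rangle.
\]

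Then I would average over $i \in [n]$. The only non-trivial step is handling the linear cross term, which becomes $-4L\langle \frac{1}{n}\sum_i \nabla f_i(x^*), x - x^*\rangle = -4L\langle \nabla f(x^*), x - x^*\rangle$; since $x^*$ is a global minimizer of $f$, $\nabla f(x^*) = 0$ and this term vanishes. Finally, since the resulting bound holds for every minimizer $x^*$, taking the infimum on the LHS gives $\H \le \frac{2}{n}\sum_i \bigl(\|\nabla f_i(x)\|^2 + 2L(f_i(x)-f_i(x^*))\bigr)$, as claimed. No step here is subtle: the entire argument hinges on recognizing that the $L$-smooth convex co-coercivity bound lets one trade gradient differences for function-value gaps, and that the linear term killed by $\nabla f(x^*)=0$ is precisely what makes the pointwise-in-$i$ inequality survive averaging.
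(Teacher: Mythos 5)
Your proof is correct and follows essentially the same route as the paper's: both rest on the co-coercivity bound $\|\nabla f_i(x)-\nabla f_i(x^*)\|^2 \le 2L(f_i(x)-f_i(x^*)-\langle\nabla f_i(x^*),x-x^*\rangle)$, the elementary inequality $\|a\|^2\le 2\|b\|^2+2\|a-b\|^2$, and the cancellation of the linear term via $\nabla f(x^*)=0$ after averaging. The only difference is cosmetic ordering: you split the norm per index and average at the end, while the paper averages the co-coercivity inequality first and then applies the norm split.
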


A natural choice is to set the reference point $x = \td{x}_{0}$.  Under the streaming settings where $f_{i}$ are i.i.d. functions with $\E \|\nabla f_{i}(\td{x}_{0})\|^{2} < \infty$ and $\E |f_{i}(x)| < \infty$ for $x\in \{\td{x}_{0}, x^{*}\}$, the strong law of large number implies that 
\begin{align}
&\frac{1}{n}\sum_{i=1}^{n}\lb\|\nabla f_{i}(\td{x}_{0})\|^{2} + 2f_{i}(x) - 2f_{i}(x^{*})\rb \stackrel{a.s.}{\rightarrow} \E \lb \|\nabla f_{1}(\td{x}_{0})\|^{2} + 2f_{1}(x) - 2_{1}(x^{*})\rb < \infty\nonumber\\
& \qquad\qquad  \Longrightarrow \H = O_{p}(1)\label{eq:iid_f}
\end{align}
This entails that problem \eqref{eq:obj_generic} with i.i.d. individual functions is ``easy''. This is heuristically reasonable since the i.i.d. assumption, plus the moment conditions, forces the data to be highly homogeneous. 

In fact, \eqref{eq:iid_f} can be proved under much broader settings. For example, when solving a linear equation $Ax = b$, $f_{i}(x)$ can be set as $\frac{1}{2}(a_{i}^{T}x - b_{i})^{2}$ and no randomness is involved. If we set $x = 0$, then \eqref{eq:general_f} implies that 
\[\H \le \frac{2}{n}\sum_{i=1}^{n}(2b_{i}^{2} - f_{i}(x^{*}))\le \frac{4\|b\|^{2}}{n}.\]
Then $\H = O(1)$ provided $\|b\|^{2} = O(n)$. 

Another type of problems with $\H = O(1)$ involves pairwise comparisons, i.e.
\[f(x) = \sum_{j,k=1}^{m}f_{jk}(Z_{j}, Z_{k}; x) \]
where $Z_{1}, \ldots, Z_{m}$ are independent samples. For example, in preference elicitation or sporting competitions where the data is collected as pairwise-comparisons, one can fit a Bradley-Terry model to obtain the underlying ``score'' that represents the quality of each unit. The objective function of the Bradley-Terry model is $\sum_{j,k}[W_{j,k}\beta_{j} - W_{j,k}\log (e^{\beta_{j}} + e^{\beta_{k}})]$ where $W_{j, k}$ is the number of times that the unit $j$ beats the unit $k$ (\cite{bradley52}, \cite{hunter04}). Other examples that involve a similar structure are metric learning (\cite{xing02}, \cite{weinberger06}) and convex relaxation of graph cuts (\cite{graphcut}). In these cases, we can also bound $\H$ under mild conditions.
\begin{proposition}\label{prop:pairwise}
Let $U_{jk} = \|\nabla f_{jk}(Z_{j}, Z_{k}; \td{x}_{0})\|^{2} + 2L(f_{jk}(Z_{j}, Z_{k}; \td{x}_{0}) - f_{jk}(Z_{j}, Z_{k}; x^{*}))$. Then 
\[\max_{j,k}\E U_{jk}^{2} = O(1)\Longrightarrow \H = O_{p}(1).\]
\end{proposition}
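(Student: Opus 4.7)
The plan is to apply Proposition \ref{prop:general_H} with the reference point chosen to be the initial iterate $x = \td{x}_{0}$. In this pairwise setting the individual functions are indexed by pairs, so we have $n = m^{2}$ summands of the form $f_{jk}(Z_{j}, Z_{k}; \cdot)$. Substituting into the bound of Proposition \ref{prop:general_H} gives immediately
\[
\H \le \frac{2}{m^{2}} \sum_{j,k=1}^{m} \lb \|\nabla f_{jk}(Z_{j}, Z_{k}; \td{x}_{0})\|^{2} + 2L\lb f_{jk}(Z_{j}, Z_{k}; \td{x}_{0}) - f_{jk}(Z_{j}, Z_{k}; x^{*})\rb \rb = \frac{2}{m^{2}}\sum_{j,k=1}^{m} U_{jk}.
\]

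Next I would take expectations and use linearity:
\[
\E \H \le \frac{2}{m^{2}}\sum_{j,k=1}^{m} \E U_{jk} \le 2\max_{j,k}\E U_{jk}.
\]
By Cauchy--Schwarz (equivalently, Jensen applied to the concave square root), $\E U_{jk} \le \sqrt{\E U_{jk}^{2}}$. Invoking the hypothesis $\max_{j,k}\E U_{jk}^{2} = O(1)$ then yields $\E \H = O(1)$, and Markov's inequality upgrades this to $\H = O_{p}(1)$, which is the desired conclusion.

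The conceptually delicate point—and the reason the proposition is nontrivial even though the calculation above is short—is the dependence structure among the $U_{jk}$. The random variables $U_{jk}$ are highly dependent across $(j,k)$: they share the underlying samples $Z_{j}$ and $Z_{k}$, and they also depend implicitly on the minimizer $x^{*}$, which is itself a function of the entire sample $(Z_{1},\ldots,Z_{m})$. For this reason one cannot invoke the strong law of large numbers as in \eqref{eq:iid_f}, and a more concentration-theoretic argument (e.g., a U-statistic Hoeffding decomposition) would be needed to obtain almost-sure convergence. However, since the conclusion only asks for tightness ($O_{p}(1)$) rather than a limiting constant, linearity of expectation bypasses the dependence entirely, and a uniform first-moment bound on the $U_{jk}$ is all that is required. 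The second-moment hypothesis in the statement is thus a convenient, easily verifiable sufficient condition (typical of moment-of-gradient calculations in models like Bradley--Terry) rather than something the proof genuinely uses in its full strength.
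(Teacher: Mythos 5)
Your proof is correct, and it takes a genuinely lighter route than the paper's. Both arguments begin identically, applying Proposition \ref{prop:general_H} at $x=\td{x}_{0}$ to obtain $\H \le \frac{2}{m^{2}}\sum_{j,k}U_{jk}$. The paper then treats the right-hand side as an order-two V-statistic and controls its \emph{variance} as well as its mean: since $\Cov(U_{jk},U_{j'k'})$ is taken to vanish unless the index pairs intersect, only $O(m^{3})$ of the $m^{4}$ covariance terms survive, giving $\Var\lb\frac{2}{m^{2}}\sum_{j,k}U_{jk}\rb=O(1/m)=o(1)$ and hence $\H=O_{p}(1)$ via Chebyshev. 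You instead stop at the first moment and invoke Markov's inequality (valid since $\H\ge 0$), with Cauchy--Schwarz serving only to convert the stated second-moment hypothesis into the first-moment bound $\E U_{jk}\le \E|U_{jk}|\le\sqrt{\E U_{jk}^{2}}$ --- note the absolute value is needed, since $f_{jk}(Z_{j},Z_{k};\td{x}_{0})-f_{jk}(Z_{j},Z_{k};x^{*})$ can be negative for individual pairs even though $\H$ itself is nonnegative. What each approach buys: yours is shorter, would go through under the weaker hypothesis $\max_{j,k}\E|U_{jk}|=O(1)$, and, as you correctly observe, is completely insensitive to the dependence among the $U_{jk}$; the paper's covariance bookkeeping implicitly treats $U_{jk}$ as a function of $(Z_{j},Z_{k})$ alone, which is delicate precisely because $x^{*}$ (and possibly $\td{x}_{0}$) is determined by the entire sample. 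In exchange, the paper's argument, where that bookkeeping is justified, yields the stronger conclusion that the upper bound on $\H$ concentrates around an $O(1)$ constant rather than merely being tight, which is presumably why the hypothesis is phrased in terms of second moments.
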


Finally, it is worth mentioning that \eqref{eq:iid_f} cannot be established for $\H^{*}$ unless the domain is compact and more regularity conditions, than the existence of second moment, are imposed to ensure that a certain version of uniform law of large number can be applied. 

\subsection{Estimating $\H(f)$ in Generalized Linear Models}
Optimzation problems in machine learning are often generalized linear models where $f_{i}(x) = \rho(y_{i}, a_{i}^{T}x)$, with $a_{i}$ being the covariates and $y_{i}$ being the responses, for some convex loss function $\rho$. Let $\rho_{2}(z, w) = \frac{\partial}{\partial w}\rho(z, w)$. Then by definition
\[\H = \frac{1}{n}\sum_{i=1}^{n}\rho_{2}(y_{i}, a_{i}^{T}x^{*})^{2}\cdot \|a_{i}\|^{2}.\]
If $\rho_{2}(y_{i}, a_{i}^{T}x)$ is uniformly bounded with $\rho_{2}(y_{i}, a_{i}^{T}x^{*})^{2} \le M_{1}$, then 
\[\H \le M_{1}\cdot \frac{1}{n}\sum_{i=1}^{n}\|a_{i}\|^{2}.\]
We will show in appendix \ref{app:section4} that $M_{1} = 2$ for multi-class logistic regression, regardless of the number of classes. The same bound can also be derived for Huber regression \cite{HuberBook81}, Probit model (\cite{GLM}), etc.. When the domain is unbounded, the (penalized) least square regression has an unbounded $\rho_{2}(z, w)$. However notice that $x^{*} = (A^{T}A)^{-1}A^{T}y$ where $A = (a_{1}, \ldots, a_{n})^{T}$, one can easily show that
\[\H \le \max_{i}\sup\|a_{i}\|^{2}\cdot \frac{\|y\|^{2}}{n}.\]
 
\subsection{$\H(f)$ in Pathological Cases}
The last two subsections exhibit various examples where $\H$ is well controlled. Indeed, there exist pathological cases where $\H$ is large. For instance, let $n$ be an even number and $f_{i}(x) = \frac{1}{2}\|x - \alpha^{(i)}\|^{2}$, where $\alpha^{(i)}\in \R^{n}$ with $\alpha^{(i)}_{i + 1} = \alpha^{(i)}_{i + 2} = \ldots = \alpha^{(i)}_{i + n/2} = 1$ ($\alpha^{(i)}_{n + k} = \alpha^{(i)}_{k}$) and all other elements equal to $0$.   \footnote{We thank Chi Jin for providing the example.} In this case, $x^{*} = \frac{1}{2}\textbf{1}$ and by symmetry $\H = \|\nabla f_{1}(x^{*})\|^{2} = \frac{n}{4}$. Another example is a quadratic function with $f_{i}(x) = \frac{1}{2}(x - i)^{2}$, in which case $x^{*} = \frac{(n + 1)}{4}$ and hence $\H = \Omega(n^{2})$. 

The first example is due to the high dimension. When the dimension is comparable to $n$, even the i.i.d. assumption cannot guarantee a good behavior of $\H$, without further conditions, since the law of large number fails. The second example is due to the severe heterogeneity of components. In fact the $i$-th component reaches its minimum at $x = i$ while the global function reaches its minimum at $x = \frac{n + 1}{4}$ and thus most components behaves completely different from the global function. 

Nevertheless, it is worth emphasizing that SGD also faces with the same issue in these two cases. More importantly, SCSG does not suffer from these undesirable properties since it will choose $B = n$ automatically; See Corollary \ref{cor:SCSG_complexity} to Corollary \ref{cor:SCSG_complexity_comp_last_iter}.

\section{Experiments}\label{sec:experiment}
In this section, we illustrate the performance of SCSG by implementing it
for multi-class logistic regression on the MNIST dataset~\footnote{http://yann.lecun.com/exdb/mnist/.} We normalize 
the data into the range $[0, 1]$ by dividing each entry by $256$.  
No regularization term is added and so the function to be minimized is 
\[f(x) = \frac{1}{n}\sum_{i=1}^{n}\lb \log\lb 1 + \sum_{k=1}^{K - 1}e^{a_{i}^{T}x_{k}}\rb - \sum_{k=1}^{K - 1}I(y_{i} = k)a_{i}^{T}x_{k}\rb,\]
where $n = 60000, K = 10$, $y_{i}\in \{0, 1, \ldots, 9\}$, $a_{i}\in\R^{785}$ including $28\times 28 = 784$ pixels plus an intercept term $1$ and $x = (x_{1}, \ldots, x_{9})\in \R^{785 \times 9} = \R^{7065}$. Direct computation shows that $ \H  = 174.25$ while $\H^{*} = 585.64$. 

The performance is measured by $\log_{10}\|\nabla f(x)\|^{2}$ versus the number of passes of data \footnote{Although beging ideal to report $f(x) - f(x^{*})$, it is not feasible in that $f^{*}$ is unknown. }. For each algorithm mentioned later, we selects the best-tuned stepsize and then implement the algorithm for 10 times and report the average to avoid the random effect. 

Here we compare SCSG with mini-batch SGD, with the batch size $B$, and SVRG. Moreover, we consider three variants of SCSG:
\begin{enumerate}[(1)]
\item (SCSGFixN) set $\Nj \equiv B$, instead of generated from a geometric distribution;
\item (SCSGNew) randomly pick $i_{k}\in \I_{j}$, instead of from the whole dataset $[n]$;
\item (SCSGNewFixN) set $\Nj \equiv B$ and randomly pick $i_{k}\in \I_{j}$.
\end{enumerate}
The first variant is to check whether geometric random variable is essential in practice; the second variant is to check whether running SGD from the whole dataset is necessary; and the third variant is the combination.

For all the variants of SCSG and SGD, we consider three batch sizes $B\in \{0.01n, 0.05n, 0.25n\}$. The results are plotted in Figure \ref{fig:mnist}, from which we make the following observations: 
\begin{enumerate}[1)]
\item SCSG is able to reach an accurate solution very fast since all versions of SCSG are more efficient than SGD and SVRG in the first 5 passes. This confirms our theory;
\item SCSG with fixed $\Nj$ is slightly more effective than the original SCSG. Thus the geometric random variable might not be essential in practice;
\item  It makes no difference whether sampling from the whole dataset or sampling from the mini-batch when running the SGD steps in SCSG. 
\end{enumerate}

\begin{figure}[htp]
  \centering
  \includegraphics[width = 0.32\textwidth]{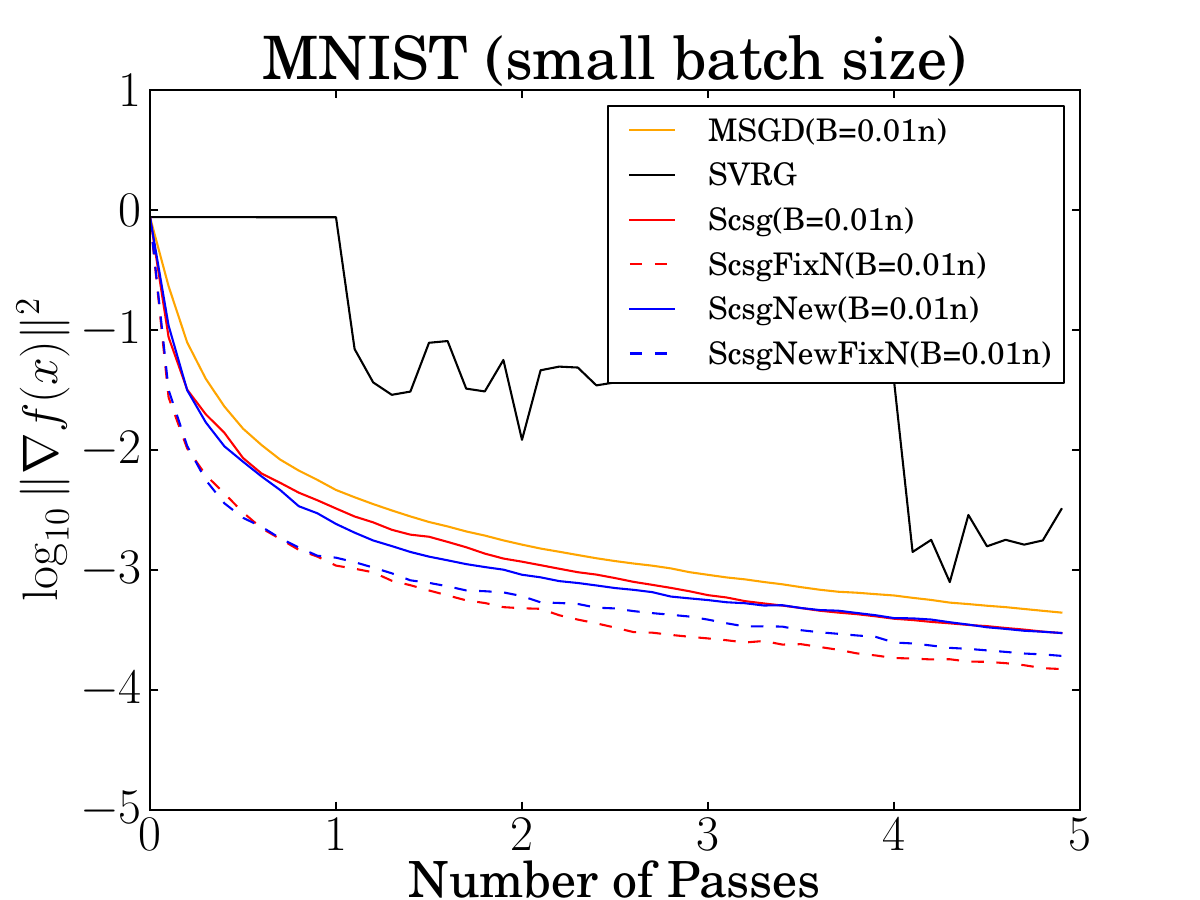}
  \includegraphics[width = 0.32\textwidth]{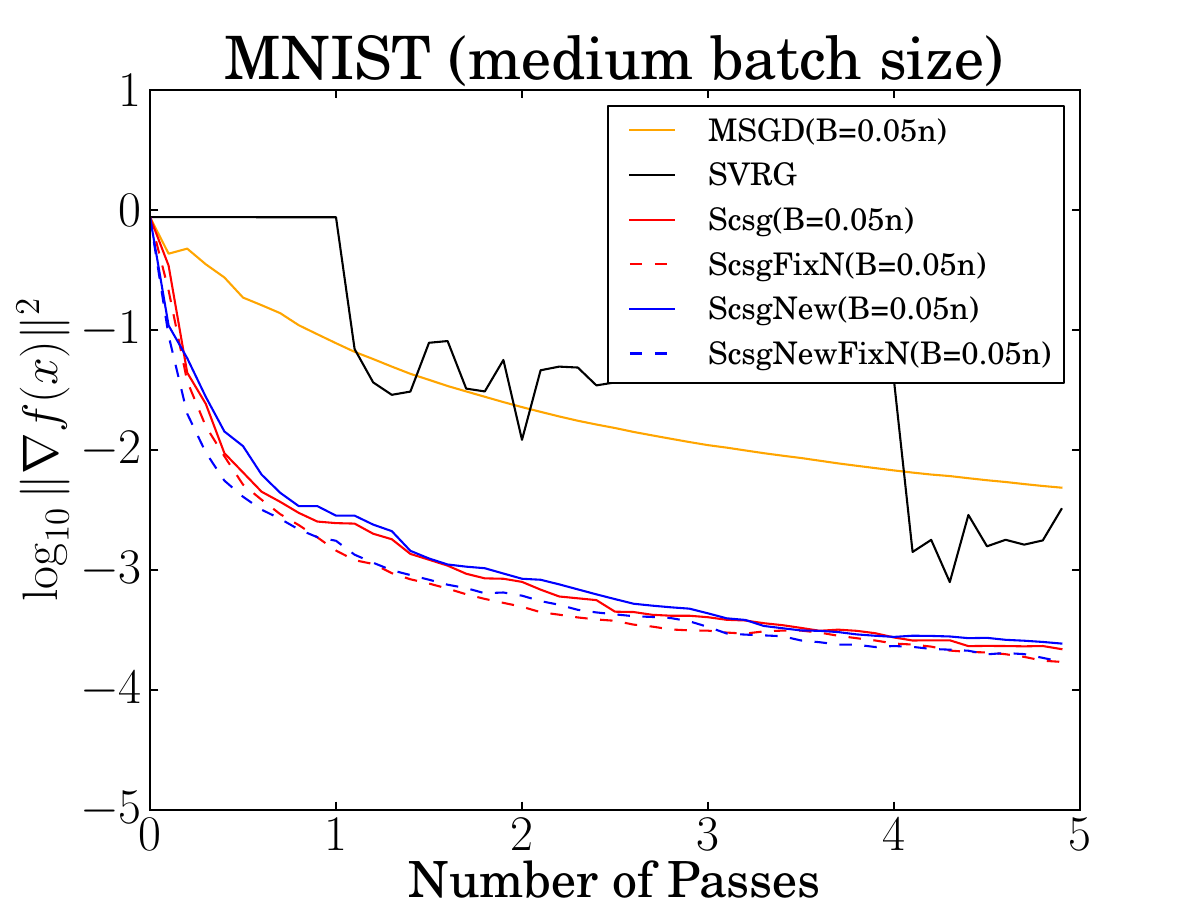}
  \includegraphics[width = 0.32\textwidth]{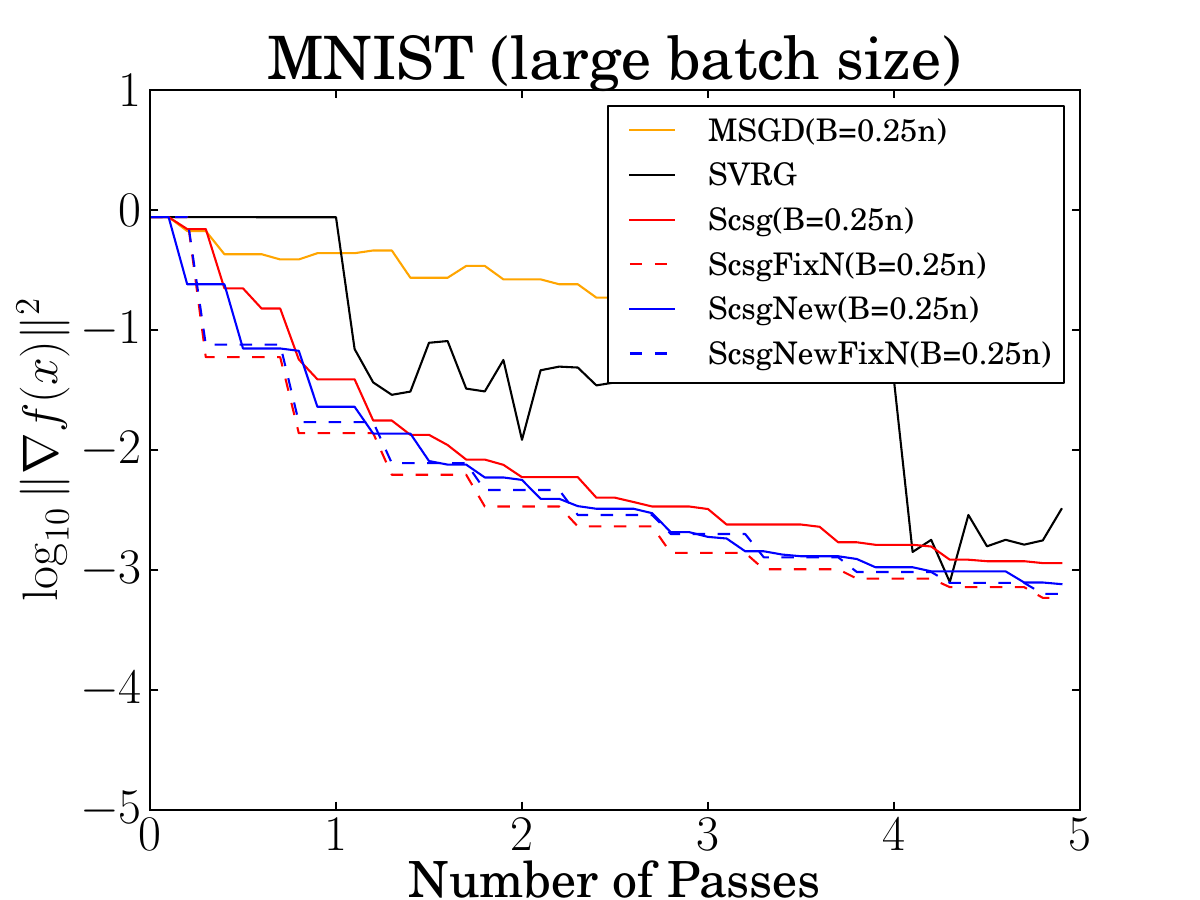}
  \caption{Performance plots on SCSG and other algorithms. Each column represents a (initial )batch size ($0.01n, 0.05n$ and $0.25n$)}\label{fig:mnist}
\end{figure}

Based on our observations, we recommend implementing SCSGNewFixN as the default since 1) the fixed number of SGD steps stablizes the procedure; 2) sampling from the mini-batch reduces the communication cost incurred by accessing data from the whole dataset. 
 
\section{Discussion}\label{sec:discussion}
We propose SCSG as a member of the SVRG family of algorithms, proving its superior performance in terms of both computation and communication cost. Both complexities are independent of sample size when the required accuracy is low, for various functions which are widely optimized in practice. The real data example also validates our theory.

We plan to explore several variants of SCSG in future work. For example, a non-uniform sampling scheme can be applied to SGD steps to leverage the Lipschitz constants $L_{i}$ as in SVRG. More interestingly, we can consider a better sampling scheme for $\I_{j}$ by putting more weight on influential observations. The proximal settings are also straightforward extensions of our current work.

As a final comment, we found that the previous complexity analysis focuses on the high-accuracy computation for which the dependence on the sample size $n$ and condition number $\kappa$ is of major concern. The low-accuracy regime is unfortunately under-studied theoretically even though it is commonly encountered in practice. We advocate taking all three parameters, namely $n$, $\kappa$ and $\eps$, into consideration and distinguishing the analyses for high-accuracy computation and low-accuracy computation as standard practice in the literature.

\section{Acknowledgment}
We thank the Chi Jin, Nathan Srebro and anonymous reviewers for their helpful comments, which greatly improved this work. 


\newpage
\bibliography{SCSG}
\bibliographystyle{plain}

\appendix
\section{Lemmas}\label{app:lemmas}
\begin{lemma}\label{lem:cocoercive}
Let $g$ be a convex function that satisfies the assumption $\textbf{A}1$,
\begin{align*}
\|\nabla g(x) - \nabla g(y)\|^{2}&\le 2L(\breg{g}{x}{y})
\end{align*}
\end{lemma}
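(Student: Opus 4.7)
The plan is to prove this by the standard cocoercivity argument, which reduces the claim to applying the $L$-smoothness upper bound at the minimizer of an auxiliary function.

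First I would fix $y$ and introduce the auxiliary function
\[
\phi_y(x) \;=\; g(x) - g(y) - \langle \nabla g(y), x - y\rangle.
\]
Three elementary observations then do almost all the work: (i) $\phi_y$ is convex, since it differs from $g$ by an affine function; (ii) $\nabla \phi_y(x) = \nabla g(x) - \nabla g(y)$, so $\phi_y$ inherits the $L$-Lipschitz-gradient property from $g$ (i.e., $\phi_y$ satisfies Assumption \textbf{A}1); and (iii) $\nabla \phi_y(y) = 0$ and $\phi_y(y) = 0$, so $y$ is a global minimizer of $\phi_y$ with minimum value zero.

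Next I would apply the $L$-smoothness upper bound (the descent lemma) to $\phi_y$: for any $x$ and any $z$,
\[
\phi_y(z) \;\le\; \phi_y(x) + \langle \nabla \phi_y(x), z - x\rangle + \frac{L}{2}\|z - x\|^{2}.
\]
Choosing $z = x - \frac{1}{L}\nabla \phi_y(x)$ (the gradient-descent step at $x$) the right-hand side simplifies to $\phi_y(x) - \frac{1}{2L}\|\nabla \phi_y(x)\|^{2}$. Combining this with the lower bound $\phi_y(z) \ge \phi_y(y) = 0$ from step (iii) gives
\[
\frac{1}{2L}\|\nabla \phi_y(x)\|^{2} \;\le\; \phi_y(x),
\]
which upon substituting $\nabla \phi_y(x) = \nabla g(x) - \nabla g(y)$ and the definition of $\phi_y$ yields exactly the claimed inequality.

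There is no real obstacle here; the only subtle point is justifying that $\phi_y$ itself satisfies Assumption \textbf{A}1 (an affine perturbation does not change the Hessian/gradient Lipschitz constant) and recognizing that picking $z$ to be the gradient-descent step from $x$ is the right choice to turn the smoothness upper bound into a bound on $\|\nabla \phi_y(x)\|^2$. Everything else is a direct substitution.
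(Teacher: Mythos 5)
Your proof is correct and is precisely the standard co-coercivity argument that the paper invokes by citation (Theorem 2.1.5 of Nesterov); the auxiliary function $\phi_y$, the observation that $y$ minimizes it, and the gradient-descent step $z = x - \frac{1}{L}\nabla\phi_y(x)$ in the smoothness upper bound are exactly the textbook route. Note only that what $\phi_y$ inherits from $g$ is the quadratic upper bound of Assumption \textbf{A}1 directly (the affine terms cancel in the second-order difference), which is all the descent-lemma step requires.
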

\begin{proof}
This is the standard Co-coercivity argument; See e.g. \cite{cocoercive}, Theorem 2.1.5 of \cite{nesterov04}.
\end{proof}

\begin{lemma}\label{lem:geom}
Let $N\sim \mathrm{Geom}(\gamma)$ for some $\gamma > 0$. Then for any sequence $\{D_{n}\}$ with $\E |D_{N}| < \infty$,
\[\E (D_{N} - D_{N + 1}) = \lb\frac{1}{\gamma} - 1\rb\lb D_{0} - \E D_{N}\rb.\]
\end{lemma}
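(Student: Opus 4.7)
The plan is to write both expectations as explicit series in the probability mass function of the geometric distribution and exploit a simple index shift. Concretely, I would compute
\begin{align*}
\E D_{N} &= \sum_{k=0}^{\infty}(1-\gamma)\gamma^{k}D_{k},\\
\E D_{N+1} &= \sum_{k=0}^{\infty}(1-\gamma)\gamma^{k}D_{k+1}.
\end{align*}
Re-indexing the second sum by $k' = k+1$ gives
\[
\E D_{N+1} = \sum_{k'=1}^{\infty}(1-\gamma)\gamma^{k'-1}D_{k'} = \frac{1}{\gamma}\sum_{k=1}^{\infty}(1-\gamma)\gamma^{k}D_{k} = \frac{1}{\gamma}\bigl(\E D_{N} - (1-\gamma)D_{0}\bigr),
\]
where the last identity just pulls out the $k=0$ term from the series for $\E D_N$.

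Subtracting, I would then get
\[
\E (D_{N}-D_{N+1}) = \E D_{N} - \frac{1}{\gamma}\E D_{N} + \frac{1-\gamma}{\gamma}D_{0} = \frac{1-\gamma}{\gamma}\bigl(D_{0} - \E D_{N}\bigr),
\]
and since $\frac{1-\gamma}{\gamma}=\frac{1}{\gamma}-1$, this is exactly the claimed identity.

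There is no real obstacle here beyond the re-indexing; the only subtlety worth flagging is convergence. To be careful I would note that the identity is to be read as "whenever $\E|D_N|<\infty$, so is $\E|D_{N+1}|<\infty$ and the displayed equality holds,'' which follows because $\sum_{k}\gamma^{k}|D_{k+1}| = \gamma^{-1}\sum_{k\ge 1}\gamma^{k}|D_{k}|\le \gamma^{-1}\sum_{k\ge 0}\gamma^{k}|D_{k}|$, so the rearrangement of absolutely convergent series used above is legitimate. In the applications of this lemma in the paper, $D_{k}$ will always be a nonnegative quantity (e.g.\ a suboptimality or a squared distance along the inner loop), so absolute convergence is automatic and the algebraic manipulation above is the entire argument.
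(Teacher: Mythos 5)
Your proof is correct and follows essentially the same route as the paper's: the paper rearranges the single series for $\E(D_N - D_{N+1})$ via an index shift, whereas you split it into $\E D_N$ and $\E D_{N+1}$ and shift the latter, but the computation is identical. Your closing remark on absolute convergence is a legitimate point that the paper's proof silently assumes.
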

\begin{proof}
By definition, 
  \begin{align*}
   &\E (D_{N} - D_{N + 1})  = \sum_{n\ge 0}(D_{n} - D_{n+1})\cdot \gamma^{n}(1 - \gamma)\\ 
= &(1 - \gamma)\lb D_{0} - \sum_{n\ge 1}D_{n}(\gamma^{n-1} - \gamma^{n})\rb = (1 - \gamma)\lb \frac{1}{\gamma}D_{0} - \sum_{n\ge 0}D_{n}(\gamma^{n-1} - \gamma^{n})\rb\\ 
= & (1 - \gamma)\lb \frac{1}{\gamma}D_{0} - \frac{1}{\gamma}\sum_{n\ge 0}D_{n}\gamma^{n}(1 - \gamma)\rb = \lb\frac{1}{\gamma} - 1\rb (D_{0} - \E D_{N}),
  \end{align*}
where the last equality follows the condition that $\E |D_{N}| < \infty$.
\end{proof}

\begin{lemma}\label{lem:quad}
  Let $a, b> 0$ and $c\in \R$. Then for any $x \ge 0$,
\[ax^{2} \le bx + c\Longrightarrow ax^{2} \le \frac{b^{2}}{a} + 2c.\]
\end{lemma}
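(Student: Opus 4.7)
The plan is to bound the linear term $bx$ using a weighted AM--GM (Young) inequality so as to absorb half of $ax^{2}$ back into the left-hand side.

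Concretely, I would write $bx = (\sqrt{a}\,x)\cdot(b/\sqrt{a})$ and apply $\alpha\beta \le \tfrac{1}{2}\alpha^{2} + \tfrac{1}{2}\beta^{2}$ with $\alpha = \sqrt{a}\,x$ and $\beta = b/\sqrt{a}$, which is legitimate because $a>0$ (so $\sqrt{a}$ is defined) and $x \ge 0$. This yields
\[
bx \;\le\; \frac{ax^{2}}{2} + \frac{b^{2}}{2a}.
\]
Substituting into the hypothesis $ax^{2}\le bx + c$ gives $ax^{2} \le \tfrac{1}{2}ax^{2} + \tfrac{b^{2}}{2a} + c$, and rearranging (multiplying by $2$ and cancelling the resulting $ax^{2}$ term on one side) delivers the claimed bound $ax^{2} \le \tfrac{b^{2}}{a} + 2c$.

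There is essentially no obstacle here: the only thing to watch is the sign conventions, namely that $a > 0$ is needed for the division and for the absorption step to preserve the inequality, while $b > 0$ and $x \ge 0$ ensure $bx \ge 0$ so the AM--GM step is tight in the useful direction. The sign of $c$ is irrelevant because it simply passes through both inequalities additively. Thus the whole argument is one line of Young's inequality followed by one line of algebra.
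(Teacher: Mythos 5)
Your proof is correct, but it takes a different route from the paper. The paper solves the quadratic inequality explicitly to obtain $x \le \frac{b}{2a} + \sqrt{\frac{c}{a} + \frac{b^{2}}{4a^{2}}}$ and then applies $(z+w)^{2} \le 2z^{2} + 2w^{2}$ to the right-hand side, which yields $x^{2} \le \frac{b^{2}}{a^{2}} + \frac{2c}{a}$ and hence the claim. You instead absorb the cross term directly: Young's inequality gives $bx \le \frac{ax^{2}}{2} + \frac{b^{2}}{2a}$, and substituting into the hypothesis leaves $\frac{ax^{2}}{2} \le \frac{b^{2}}{2a} + c$. Your version is arguably cleaner: it sidesteps the square root entirely, and in particular never has to worry about whether $\frac{c}{a} + \frac{b^{2}}{4a^{2}}$ is nonnegative when $c < 0$ (in the paper's argument this is guaranteed because the hypothesis forces the discriminant $b^{2} + 4ac \ge 0$, but it is an unstated check). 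A minor remark: your argument does not actually use $b > 0$ or $x \ge 0$ at all, since $\alpha\beta \le \frac{1}{2}\alpha^{2} + \frac{1}{2}\beta^{2}$ holds for all real $\alpha, \beta$; only $a > 0$ is needed. Both proofs arrive at the same constants.
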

\begin{proof}
  An elementary computation shows that 
\[ax^{2} \le bx + c\Longrightarrow x\le  \frac{b}{2a} + \sqrt{\frac{c}{a} + \frac{b^{2}}{4a^{2}}}.\]
Using the fact that $(z + w)^{2}\le 2z^{2} + 2w^{2}$, we have
\[x^{2}\le 2\lb\frac{b}{2a}\rb^{2} + 2\lb\frac{c}{a} + \frac{b^{2}}{4a^{2}}\rb = \frac{b^{2}}{a^{2}} + \frac{2c}{a}.\Longrightarrow ax^{2} \le \frac{b^{2}}{a} + 2c.\]
\end{proof}

\section{One-Epoch Analysis}\label{app:one_epoch}
First we prove a lemma that generalizes Lemma \ref{lem:sg_optim_var}.
\begin{lemma}\label{lem:var_sampling}
Let $z_{1}, \ldots, z_{M}\in \R^{d}$ be an arbitrary population of $M$ vectors with 
\[\sum_{j=1}^{M}z_{j} = 0.\]
Further let $\mathcal{J}$ be a uniform random subset of $[M]$ with size $m$. Then 
\[\E \left\|\frac{1}{m}\sum_{j\in \mathcal{J}}z_{j}\right\|^{2} = \frac{M - m}{(M - 1)m}\cdot \frac{1}{M}\sum_{j=1}^{M}\|z_{j}\|^{2}\le \frac{I(m < M)}{m}\cdot \frac{1}{M}\sum_{j=1}^{M}\|z_{j}\|^{2}.\]
\end{lemma}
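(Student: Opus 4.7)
The plan is to reduce this variance computation for sampling without replacement to a calculation with indicator random variables. Write $\mathcal{J}$ as $\{j \in [M] : \xi_j = 1\}$ where $\xi_j = I(j \in \mathcal{J})$, so that $\sum_{j \in \mathcal{J}} z_j = \sum_{j=1}^M \xi_j z_j$. Expanding the squared norm gives
\[
\E \left\|\frac{1}{m}\sum_{j=1}^{M}\xi_j z_{j}\right\|^{2} = \frac{1}{m^2}\sum_{i,j=1}^{M} \E[\xi_i \xi_j]\,\langle z_i, z_j\rangle,
\]
so everything reduces to computing the first and second moments of the $\xi_j$.

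Next I would decompose this sum into a mean part plus a covariance part: write $\E[\xi_i \xi_j] = \E[\xi_i]\E[\xi_j] + \Cov(\xi_i,\xi_j)$. The mean part collapses to $\|(m/M)\sum_j z_j\|^2 = 0$ by the centering hypothesis $\sum_j z_j = 0$, which is the whole point of that assumption. That leaves only the covariance contribution, for which I use the standard moments of sampling without replacement: $\Var(\xi_i) = \frac{m(M-m)}{M^2}$ and, for $i\ne j$, $\Cov(\xi_i,\xi_j) = \frac{m(m-1)}{M(M-1)} - \frac{m^2}{M^2} = -\frac{m(M-m)}{M^2(M-1)}$.

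Then I would use the centering one more time, in the form $\sum_{i\ne j}\langle z_i, z_j\rangle = \|\sum_j z_j\|^2 - \sum_j \|z_j\|^2 = -\sum_j \|z_j\|^2$, to combine the diagonal and off-diagonal covariance contributions. A short algebraic simplification of $\frac{m(M-m)}{M^2} + \frac{m(M-m)}{M^2(M-1)} = \frac{m(M-m)}{M(M-1)}$ yields
\[
\E \left\|\sum_{j=1}^M \xi_j z_j\right\|^2 = \frac{m(M-m)}{M(M-1)}\sum_{j=1}^M \|z_j\|^2,
\]
and dividing by $m^2$ produces the stated equality. Finally, the inequality follows from $\frac{M-m}{M-1} \le 1$ when $m < M$ and from both sides being $0$ when $m = M$.

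No single step is a serious obstacle; the only thing to be careful about is tracking the sign in $\Cov(\xi_i,\xi_j)$ and verifying that the negative off-diagonal covariance is exactly compensated by the centering identity for $\sum_{i\ne j}\langle z_i,z_j\rangle$, which is where the constant $\frac{M-m}{(M-1)m}$ (as opposed to $\frac{1}{m}$) arises.
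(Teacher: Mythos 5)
Your proof is correct and follows essentially the same route as the paper's: both express the sample mean via inclusion indicators, use the second moments of sampling without replacement, and exploit the centering $\sum_j z_j = 0$ to reduce the cross terms. Your mean-plus-covariance bookkeeping is only a cosmetic reorganization of the paper's direct expansion.
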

\begin{proof}
Let $W_{j} = I(j\in \mathcal{J})$, then it is easy to see that
\begin{equation}
  \label{eq:cov_wj}
  \E W_{j}^{2} = \E W_{j} = \frac{m}{M}, \quad \E W_{j}W_{j'} = \frac{m(m - 1)}{M(M - 1)}.
\end{equation}
Then the sampling mean can be reformulated as
\[\frac{1}{m}\sum_{j\in \mathcal{J}}z_{j} = \frac{1}{m}\sum_{i=1}^{n}W_{j}z_{j}.\]
This implies that 
\begin{align*}
\E \bigg\|\frac{1}{m}\sum_{j\in \mathcal{J}}z_{j}\bigg\|^{2} &= \frac{1}{m^{2}}\lb\sum_{j=1}^{M}\E W_{j}^{2}\|z_{j}\|^{2} + \sum_{j\not= j'}\E W_{j}W_{j'} \la z_{j}, z_{j'}\ra\rb\\
& = \frac{1}{m^{2}}\lb\frac{m}{M}\sum_{j=1}^{M}\|z_{j}\|^{2} + \frac{m(m - 1)}{M(M - 1)}\sum_{j\not= j'} \la z_{j}, z_{j'}\ra\rb\\
& = \frac{1}{m^{2}}\lb\lb\frac{m}{M} - \frac{m(m - 1)}{M(M - 1)}\rb\sum_{j=1}^{M}\|z_{j}\|^{2} + \frac{m(m - 1)}{M(M - 1)}\left\|\sum_{j=1}^{M}z_{j}\right\|^{2}\rb\\
& = \frac{1}{m^{2}}\lb\frac{m}{M} - \frac{m(m - 1)}{M(M - 1)}\rb\sum_{j=1}^{M}\|z_{j}\|^{2}\\
& = \frac{M - m}{(M - 1)m}\cdot\frac{1}{M}\sum_{j=1}^{M}\|z_{j}\|^{2}.
\end{align*}
\end{proof}

\begin{proof}[\textbf{Lemma \ref{lem:sg_optim_var}}]
Let $z_{i}=  \nabla f_{i}(x^{*})$. Then 
\[\sum_{i=1}^{n}z_{i} = n\nabla f(x^{*}) = 0.\]
By Lemma \ref{lem:var_sampling} we prove the result.
\end{proof}

As all other algorithms, we start from deriving a bound for the stochastic gradients $\nuj_{k}$. For convenience, we define $\ej$ as the bias of $\nuj_{k}$, i.e.
\begin{equation}
  \label{eq:ej}
  \ej = \nabla f_{\I_{j}}(\xj_{0}) - \nabla f(\xj_{0}).
\end{equation}
By definition,
\[\E_{i_{k}}\nuj_{k} = \nabla f(\xj_{k}) + \ej.\]

\begin{lemma}\label{lem:nuknorm}
Under the assumption \textbf{A}1 and \textbf{A}2 with $\mu$ possibly equal to $0$,
\begin{align} 
  \E_{i_{k}}\|\nuj_{k}\|^{2} & \le -4L\lb f(\xj_{k}) - f(x^{*})\rb + 4L\lb f(\xj_{0}) - f(x^{*})\rb + 4L\la\nabla f(\xj_{k}), \xj_{k} - x^{*}\ra\nonumber\\
& \quad  + \|\nabla f(\xj_{k}) + \ej\|^{2}.\label{eq:nuknorm_tmp}
\end{align}
\end{lemma}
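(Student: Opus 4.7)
My plan is to work from the bias--variance identity
\[
\E_{i_{k}}\|\nuj_{k}\|^{2} = \|\E_{i_{k}}\nuj_{k}\|^{2} + \mathrm{Var}_{i_{k}}(\nuj_{k}).
\]
Because $g_{j}$, $\xj_{0}$, and $\xj_{k}$ are all fixed when conditioning on everything except the fresh index used inside $\nuj_{k}$, linearity of expectation gives $\E_{i_{k}}\nuj_{k} = \nabla f(\xj_{k}) - \nabla f(\xj_{0}) + g_{j} = \nabla f(\xj_{k}) + \ej$, which is already noted in the text and pins down the trailing $\|\nabla f(\xj_{k}) + \ej\|^{2}$ in \eqref{eq:nuknorm_tmp}. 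The variance of $\nuj_{k}$ equals the variance of $\nabla f_{i_{k}}(\xj_{k}) - \nabla f_{i_{k}}(\xj_{0})$ alone, which I then upper bound crudely by the raw second moment $\E_{i_{k}}\|\nabla f_{i_{k}}(\xj_{k}) - \nabla f_{i_{k}}(\xj_{0})\|^{2}$, since dropping the mean can only enlarge the bound.

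The core of the proof is to control this second moment using Lemma \ref{lem:cocoercive}. I first split around $x^{*}$ via $\|a - b\|^{2} \le 2\|a\|^{2} + 2\|b\|^{2}$:
\[
\E_{i_{k}}\|\nabla f_{i_{k}}(\xj_{k}) - \nabla f_{i_{k}}(\xj_{0})\|^{2} \le 2\E_{i_{k}}\|\nabla f_{i_{k}}(\xj_{k}) - \nabla f_{i_{k}}(x^{*})\|^{2} + 2\E_{i_{k}}\|\nabla f_{i_{k}}(\xj_{0}) - \nabla f_{i_{k}}(x^{*})\|^{2},
\]
and then invoke co-coercivity with \emph{different anchor points} for the two pieces. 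For the $\xj_{k}$ term I anchor at $\xj_{k}$, i.e., apply Lemma \ref{lem:cocoercive} with $x = x^{*}$, $y = \xj_{k}$:
\[
\|\nabla f_{i_{k}}(\xj_{k}) - \nabla f_{i_{k}}(x^{*})\|^{2} \le 2L\bigl(f_{i_{k}}(x^{*}) - f_{i_{k}}(\xj_{k}) + \la \nabla f_{i_{k}}(\xj_{k}),\, \xj_{k} - x^{*}\ra\bigr).
\]
For the $\xj_{0}$ term I anchor at $x^{*}$, i.e., take $x = \xj_{0}$, $y = x^{*}$:
\[
\|\nabla f_{i_{k}}(\xj_{0}) - \nabla f_{i_{k}}(x^{*})\|^{2} \le 2L\bigl(f_{i_{k}}(\xj_{0}) - f_{i_{k}}(x^{*}) - \la \nabla f_{i_{k}}(x^{*}),\, \xj_{0} - x^{*}\ra\bigr).
\]

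Taking $\E_{i_{k}}$ of both bounds collapses individual gradients to $\nabla f$; crucially, the inner-product term in the second bound averages to $\la \nabla f(x^{*}),\, \xj_{0} - x^{*}\ra = 0$. Doubling and summing reassembles exactly $-4L(f(\xj_{k}) - f(x^{*})) + 4L(f(\xj_{0}) - f(x^{*})) + 4L\la \nabla f(\xj_{k}),\, \xj_{k} - x^{*}\ra$, and combining with $\|\nabla f(\xj_{k}) + \ej\|^{2}$ from the first step yields the claim. The only subtle choice is the asymmetric anchoring: anchoring both terms at $x^{*}$ (the classical SVRG choice) would yield the cleaner bound $+4L(f(\xj_{k}) - f(x^{*})) + 4L(f(\xj_{0}) - f(x^{*}))$, but that expression carries the wrong sign on $f(\xj_{k}) - f(x^{*})$ for the subsequent one-step descent recursion; anchoring the $\xj_{k}$ term at $\xj_{k}$ flips that sign at the cost of the inner product $\la \nabla f(\xj_{k}), \xj_{k} - x^{*}\ra$, which is precisely the expression that cancels in the downstream telescoping argument. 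Assumption \textbf{A}2 is not actually used, consistent with the ``$\mu$ possibly equal to $0$'' clause in the statement.
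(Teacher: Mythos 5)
Your proof is correct and follows essentially the same route as the paper's: the bias--variance decomposition with $\E_{i_{k}}\nuj_{k} = \nabla f(\xj_{k}) + \ej$, bounding the variance by the raw second moment of $\nabla f_{i_{k}}(\xj_{k}) - \nabla f_{i_{k}}(\xj_{0})$, splitting around $x^{*}$, and applying Lemma \ref{lem:cocoercive} with exactly the same asymmetric choice of anchor points (using $\nabla f(x^{*}) = 0$ to kill the inner product in the $\xj_{0}$ term). Your closing remark on why the $\xj_{k}$ term must be anchored at $\xj_{k}$ rather than $x^{*}$ is a correct and useful observation about the downstream recursion, though the paper does not spell it out.
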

\begin{proof}
  Using the fact that $\E \|Z\|^{2} = \E \|Z - \E Z\|^{2} + \|\E Z\|^{2}$ (for any random variable $Z$), we have
  \begin{align*}
&\E_{i_{k}}\|\nuj_{k}\|^{2}  = \E_{i_{k}}\|\nuj_{k} - \E_{i_{k}}\nuj_{k}\|^{2} + \|\E_{i_{k}}\nuj_{k}\|^{2}\\
 = & \E_{i_{k}}\|\nabla f_{i_{k}}(\xj_{k}) - \nabla f_{i_{k}}(\xj_{0}) - (\nabla f(\xj_{k}) - \nabla f(\xj_{0}))\|^{2} + \|\nabla f(\xj_{k}) + \ej\|^{2}\\
\le & \E_{i_{k}}\|\nabla f_{i_{k}}(\xj_{k}) - \nabla f_{i_{k}}(\xj_{0})\|^{2} + \|\nabla f(\xj_{k}) + \ej\|^{2}\\
\le & 2\E_{i_{k}}\|\nabla f_{i_{k}}(\xj_{k}) - \nabla f_{i_{k}}(x^{*})\|^{2} + 2\E_{i_{k}}\|\nabla f_{i_{k}}(\xj_{0}) - \nabla f_{i_{k}}(x^{*})\|^{2} + \|\nabla f(\xj_{k}) + \ej\|^{2}.
\end{align*}
By Lemma \ref{lem:cocoercive} with $g = f_{i_{k}}, x = x^{*}, y = \xj_{k}$,
\begin{align*}
\E_{i_{k}}\|\nabla f_{i_{k}}(\xj_{k}) - \nabla f_{i_{k}}(x^{*})\|^{2}&\le 2L\E_{i_{k}}\lb f_{i_{k}}(x^{*}) - f_{i_{k}}(\xj_{k}) + \la\nabla f_{i_{k}}(\xj_{k}), \xj_{k} - x^{*}\ra\rb\\
& = 2L \lb f(x^{*}) - f(\xj_{k}) + \la\nabla f(\xj_{k}), \xj_{k} - x^{*}\ra\rb 
\end{align*}
where the last line uses the fact that $i_{k}$ is independent of $(\xj_{k}, \xj_{0})$. Similarly, by Lemma \ref{lem:cocoercive} with $g = f_{i_{k}}, x = \xj_{0}, y = x^{*}$, 
\begin{align*}
\E_{i_{k}}\|\nabla f_{i_{k}}(\xj_{0}) - \nabla f_{i_{k}}(x^{*})\|^{2}&\le 2L\E_{i_{k}}\lb f_{i_{k}}(\xj_{0}) - f_{i_{k}}(x^{*}) - \la\nabla f_{i_{k}}(x^{*}), \xj_{0} - x^{*}\ra\rb\\
& = 2L\lb f(\xj_{0}) - f(x^{*})\rb.
\end{align*}
where the last line uses the smoothness of $f$. Putting the pieces together, we conclude that
\begin{align*}
  \E_{i_{k}}\|\nuj_{k}\|^{2} & \le -4L\lb f(\xj_{k}) - f(x^{*})\rb + 4L\lb f(\xj_{0}) - f(x^{*})\rb + 4L\la\nabla f(\xj_{k}), \xj_{k} - x^{*}\ra\\
& \quad  + \|\nabla f(\xj_{k}) + \ej\|^{2}.
\end{align*}
\end{proof}

Note that $\ej = 0$ when $B = n$. Thus, in the analysis of R-SVRG, the last term of \eqref{eq:nuknorm_tmp} reduces to $\E \|\nabla f(\xj_{k})\|^{2}$. For general case $B < n$, we will relax the last term of  \eqref{eq:nuknorm_tmp} by using the simple inequality that $\|a + b\|^{2}\le 2\|a\|^{2} + 2\|b\|^{2}$.

\begin{corollary}\label{cor:nuknorm2}
Under the same setting as Lemma \ref{lem:nuknorm}, 
\begin{align*} 
  \E_{i_{k}}\|\nuj_{k}\|^{2} & \le -4L\lb f(\xj_{k}) - f(x^{*})\rb + 4L\lb f(\xj_{0}) - f(x^{*})\rb + 4L\la\nabla f(\xj_{k}), \xj_{k} - x^{*}\ra\\
& \quad  + 2\|\nabla f(\xj_{k})\|^{2} + 2\|\ej\|^{2}.
\end{align*}
\end{corollary}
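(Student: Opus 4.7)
The plan is to obtain the corollary as a direct weakening of Lemma~\ref{lem:nuknorm}. The statement of that lemma already contains every term appearing in the corollary except for the last one: the lemma ends with $\|\nabla f(\xj_{k}) + \ej\|^{2}$, whereas the corollary has the separated form $2\|\nabla f(\xj_{k})\|^{2} + 2\|\ej\|^{2}$. The remaining terms, namely $-4L(f(\xj_{k}) - f(x^{*}))$, $4L(f(\xj_{0}) - f(x^{*}))$ and $4L\la\nabla f(\xj_{k}), \xj_{k} - x^{*}\ra$, match identically and can simply be carried through.

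Concretely, I would start from inequality \eqref{eq:nuknorm_tmp}, then apply the elementary inequality $\|a + b\|^{2} \le 2\|a\|^{2} + 2\|b\|^{2}$ (which in turn follows from expanding the square and using $2\la a, b\ra \le \|a\|^{2} + \|b\|^{2}$) with $a = \nabla f(\xj_{k})$ and $b = \ej$. This replaces $\|\nabla f(\xj_{k}) + \ej\|^{2}$ by $2\|\nabla f(\xj_{k})\|^{2} + 2\|\ej\|^{2}$, and combined with the other unchanged terms this yields exactly the bound in the corollary. The fact that $\ej$ is deterministic conditional on $\xj_{0}$ and $\I_{j}$ (so that taking $\E_{i_{k}}$ commutes with this step) is worth noting but requires no further argument.

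There is essentially no obstacle here. The only subtle point worth flagging is that the split into $2\|\nabla f(\xj_{k})\|^{2}$ and $2\|\ej\|^{2}$ is the right relaxation for the $B < n$ regime, because the variance term $\E\|\ej\|^{2}$ can then be controlled cleanly by Lemma~\ref{lem:sg_optim_var} in the downstream SCSG analysis, while in the R-SVRG case ($B = n$) one has $\ej = 0$ and the weaker form still suffices. So the corollary is a one-line consequence of Lemma~\ref{lem:nuknorm}, and its role is to package this mildly loosened bound for reuse when the batch subsample is proper.
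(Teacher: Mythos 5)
Your proposal is correct and matches the paper exactly: the paper derives the corollary from Lemma \ref{lem:nuknorm} by applying $\|a+b\|^{2}\le 2\|a\|^{2}+2\|b\|^{2}$ to the final term $\|\nabla f(\xj_{k})+\ej\|^{2}$, precisely as you do. Your added remarks on why this relaxation is the right one for the $B<n$ regime also agree with the paper's surrounding discussion.
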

To apply the property of geometric random variables (Lemma \ref{lem:geom}), we need to justify the condition $\E |D_{N}| < \infty$ for different choices of $\{D_{k}\}$. The proof is distracting and relegated to the end of this subsubsection. 
\begin{lemma}\label{lem:geom_finite}
Assume that $\etaj L \le 1 / 2$. Then for any $j$, 
\[\E \|\td{x}_{j} - x^{*}\|^{2} < \infty, \quad \E \|\nuj_{\Nj}\|_{2}^{2} < \infty, \quad \E |\la \ej, \td{x}_{j} - \td{x}_{j-1}\ra| < \infty.\]
\end{lemma}

The next step is to bound the dual gap \cite{zhu14}. Due to the bias of $\nuj_{k}$, we will have an extra term $2\etaj\Bj\sqrt{\E\|\ej\|^{2}}\cdot\sqrt{\E\|\td{x}_{j} - \td{x}_{j-1}\|^{2}}$ compared to the standard analysis.

\begin{lemma}\label{lem:dual}
Let $u\in \R^{d}$ be any variable that is independent of $\Ij$ and subsequent random indices within the $j$-th epoch, $i_{0}, i_{1}, \ldots$. Then 
\begin{align*}
2\etaj\Bj\E\la \nabla f(\td{x}_{j}), \td{x}_{j} - u \ra &\le\E\|\td{x}_{j-1} - u\|^{2} - \E\|\td{x}_{j} - u\|^{2} + 2\etaj\Bj\sqrt{\E\|\ej\|^{2}}\cdot\sqrt{\E\|\td{x}_{j} - \td{x}_{j-1}\|^{2}}\\
&  + \etaj^{2}\Bj\E \|\nuj_{\Nj}\|^{2}.
\end{align*}  
\end{lemma}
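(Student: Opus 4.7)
The plan is to adapt the standard telescoping argument for SGD-type analyses, with two new ingredients specific to SCSG: the random stopping time $\Nj$ is handled via Lemma \ref{lem:geom}, and the bias $\ej$ introduced by the mini-batch gradient is controlled by Cauchy--Schwarz together with the fact that $\ej$ has conditional mean zero given $\td{x}_{j-1}$ and $u$.

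First I would expand the one-step recursion. From $\xj_{k+1} = \xj_{k} - \etaj\nuj_{k}$,
\[
\|\xj_{k+1} - u\|^{2} - \|\xj_{k} - u\|^{2} = -2\etaj\la \nuj_{k},\, \xj_{k} - u\ra + \etaj^{2}\|\nuj_{k}\|^{2}.
\]
Since $\Nj\sim \mathrm{Geom}(\Bj/(\Bj+1))$ is drawn independently of $\Ij, \xj_{0}, i_{0}, i_{1}, \ldots$, I would first take expectation over all randomness other than $\Nj$ to obtain a deterministic numerical sequence $D_{k} := \E\|\xj_{k} - u\|^{2}$, then apply Lemma \ref{lem:geom} with $1/\gamma - 1 = 1/\Bj$; exchanging the order of expectations via Fubini yields
\[
\E\|\td{x}_{j} - u\|^{2} - \E\|\td{x}_{j-1} - u\|^{2} = \Bj\, \E\bigl(\|\xj_{\Nj + 1} - u\|^{2} - \|\xj_{\Nj} - u\|^{2}\bigr).
\]

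Then I would apply the one-step recursion at $k = \Nj$ and take the conditional expectation over $i_{\Nj}$. Because $\xj_{\Nj} = \td{x}_{j}$ and $\E[\nuj_{\Nj}\mid \xj_{\Nj}, \ej] = \nabla f(\td{x}_{j}) + \ej$ (the bias formula highlighted in Section \ref{sec:ana_SCSG}), the right-hand side becomes $-2\etaj \Bj\, \E\la \nabla f(\td{x}_{j}) + \ej,\, \td{x}_{j} - u\ra + \etaj^{2}\Bj\, \E\|\nuj_{\Nj}\|^{2}$. After rearranging, what remains is to bound the cross term $-2\etaj\Bj\, \E\la \ej,\, \td{x}_{j} - u\ra$. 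I would split $\td{x}_{j} - u = (\td{x}_{j} - \td{x}_{j-1}) + (\td{x}_{j-1} - u)$. Because $u$ is independent of $\Ij$ by hypothesis and $\td{x}_{j-1} = \xj_{0}$ is determined before $\Ij$ is sampled, $\E[\ej\mid \td{x}_{j-1}, u] = 0$, so the tower property kills the $\la \ej, \td{x}_{j-1} - u\ra$ piece. For the remaining $\la \ej, \td{x}_{j} - \td{x}_{j-1}\ra$ piece, Cauchy--Schwarz gives exactly the promised $2\etaj\Bj\sqrt{\E\|\ej\|^{2}}\sqrt{\E\|\td{x}_{j} - \td{x}_{j-1}\|^{2}}$.

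The main obstacle is the second step: Lemma \ref{lem:geom} is stated for a deterministic sequence, whereas the $\xj_{k}$ are random. The resolution relies crucially on the fact that $\Nj$ is drawn independently of every other random quantity in the epoch, which lets one average out that other randomness first, apply the geometric identity to the resulting deterministic sequence, and then recombine expectations. A secondary subtlety is that the bias $\ej$ is genuinely nonzero whenever $\Bj < n$ (this is the issue flagged in the correction note of Section \ref{sec:ana_SCSG}), so one cannot simply drop the cross term as in the $\Bj = n$ case; the hypothesis that $u$ and $\td{x}_{j-1}$ are independent of $\Ij$ is exactly what isolates the contribution into the single controllable inner product $\la \ej, \td{x}_{j} - \td{x}_{j-1}\ra$.
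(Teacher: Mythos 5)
Your proposal is correct and follows essentially the same route as the paper's proof: one-step expansion of $\|\xj_{k+1}-u\|^{2}$, averaging out all randomness except $\Nj$ so that Lemma \ref{lem:geom} applies to the deterministic sequence $D_{k}=\E\|\xj_{k}-u\|^{2}$, identifying the bias $\E_{i_{k}}\nuj_{k}=\nabla f(\xj_{k})+\ej$, killing $\E\la \ej,\td{x}_{j-1}-u\ra$ via the independence of $u$ and $\xj_{0}$ from $\Ij$, and finishing with Cauchy--Schwarz on $\E\la\ej,\td{x}_{j}-\td{x}_{j-1}\ra$. The only cosmetic difference is that the paper takes the conditional expectation over $i_{k}$ at a generic $k$ before specializing to $k=\Nj$, whereas you do it afterwards; the two orderings are equivalent given the independence of $\Nj$.
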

\begin{proof}
By definition,
  \begin{align*}
&\E_{i_{k}}\|\xj_{k+1} - u\|^{2}  = \|\xj_{k} - u\|^{2} - 2\etaj\E_{i_{k}}\la  \nuj_{k}, \xj_{k} - u\ra + \etaj^{2} \E_{i_{k}}\|\nuj_{k}\|^{2}\\
& = \|\xj_{k} - u\|^{2} - 2\etaj\la \E_{i_{k}}\nuj_{k}, \xj_{k} - u\ra + \etaj^{2} \E_{i_{k}}\|\nuj_{k}\|^{2}\\
& = \|\xj_{k} - u\|^{2} - 2\etaj\la \nabla f(\xj_{k}), \xj_{k} - u \ra -2\etaj \la \ej, \xj_{k} - u\ra+ \etaj^{2} \E_{i_{k}}\|\nuj_{k}\|^{2}.
  \end{align*}
Let $\Ej$ denote the expectation with respect to $\I_{j}$ and $i_{0}, i_{1}, \ldots$. Then 
\[\Ej\|\xj_{k+1} - u\|^{2} = \Ej\|\xj_{k} - u\|^{2} - 2\etaj\Ej\la \nabla f(\xj_{k}), \xj_{k} - u \ra -2\etaj \Ej\la \ej, \xj_{k} - u\ra+ \etaj^{2} \E_{j}\|\nuj_{k}\|^{2}.\]
Since $u$ is independent of $\Ij$ and $\E_{\Ij}\ej = 0$, we have
\[\Ej\la \ej, u\ra = \Ej\E_{\I_{j}}\la \ej, u\ra = \Ej \la \E_{\I_{j}}\ej, u\ra = 0.\]
Similarly, since $\xj_{0}$ is also independent of $\Ij$, 
\[\Ej \la\ej, \xj_{0}\ra = 0.\]
Therefore,
\[\Ej\la \ej, \xj_{k} - u\ra  = \Ej\la\ej, \xj_{k} - \xj_{0}\ra.\]
Now let $k = \Nj$ and taking expectation with respect to $\Nj$, by Lemma \ref{lem:geom} and \ref{lem:geom_finite},
\begin{align*}
&2\etaj\E_{\Nj}\Ej\la \nabla f(\xj_{\Nj}), \xj_{\Nj} - u \ra  = \E_{\Nj}\lb\Ej\|\xj_{\Nj} - u\|^{2} - \Ej\|\xj_{\Nj+1} - u\|^{2}\rb \\
& \quad - 2\etaj \E_{\Nj}\Ej\la\ej, \xj_{\Nj} - \xj_{0}\ra + \etaj^{2} \E_{\Nj}\E_{j}\|\nuj_{\Nj}\|^{2}\\
& = \frac{1}{\Bj}\lb \Ej\|\xj_{0} - u\|^{2} - \E_{\Nj}\Ej\|\xj_{\Nj} - u\|^{2}\rb - 2\etaj \E_{\Nj}\Ej\la\ej, \xj_{\Nj} - \xj_{0}\ra + \etaj^{2} \E_{\Nj}\E_{j}\|\nuj_{\Nj}\|^{2}.
\end{align*}
Replacing $\xj_{\Nj}(\xj_{0})$ by $\td{x}_{j}(\td{x}_{j-1})$ by definition and taking further expectation over all past randomness, the above equality can be rewritten as 
\begin{equation}
  \label{eq:dual1}
2\etaj\Bj\E\la \nabla f(\td{x}_{j}), \td{x}_{j} - u \ra =\E\|\td{x}_{j-1} - u\|^{2} - \E\|\td{x}_{j} - u\|^{2} - 2\etaj \Bj\E\la\ej, \td{x}_{j} - \td{x}_{j-1}\ra + \etaj^{2}\Bj \E\|\nuj_{\Nj}\|^{2}.
\end{equation}
The Cauchy-Schwartz inequality implies that 
\[-2\etaj \Bj\E\la \ej, \td{x}_{j} - \td{x}_{j-1}\ra\le 2\etaj\Bj\sqrt{\E\|\ej\|^{2}}\cdot\sqrt{\E\|\td{x}_{j} - \td{x}_{j-1}\|^{2}}.\]
Therefore, we prove the result.
\end{proof}

Applying the Lemma \ref{lem:dual} with $u = x^{*}$, we obtain the key inequality that connects $\td{x}_{j}$, $\td{x}_{j-1}$ and $x^{*}$, which is a standard step in the convergence analysis of other algorithms.
\begin{corollary}\label{cor:dual_zero}
\begin{align*}
&2\etaj\Bj(1 - 2\etaj L)\E\la \nabla f(\td{x}_{j}), \td{x}_{j} - x^{*}\ra \le\E\|\td{x}_{j-1} - x^{*}\|^{2} - \E\|\td{x}_{j} - x^{*}\|^{2} + 4\etaj^{2}L\Bj \E (f(\td{x}_{j-1}) - f(x^{*})) \\
& - 4\etaj^{2}L\Bj \E (f(\td{x}_{j}) - f(x^{*})) + 2\etaj^{2} \Bj \E \|\nabla f(\td{x}_{j})\|^{2} + 2\etaj\Bj\sqrt{\E\|\ej\|^{2}}\cdot\sqrt{\E\|\td{x}_{j} - \td{x}_{j-1}\|^{2}} + 2\etaj^{2}\Bj\E \|\ej\|^{2}.
\end{align*}    
\end{corollary}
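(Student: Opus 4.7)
The plan is straightforward: Corollary~\ref{cor:dual_zero} is simply Lemma~\ref{lem:dual} specialized to $u = x^{*}$, with the generic second-moment term $\etaj^{2}\Bj\E\|\nuj_{\Nj}\|^{2}$ replaced by the sharper upper bound supplied by Corollary~\ref{cor:nuknorm2} (evaluated at $k = \Nj$, $\xj_{k} = \td{x}_{j}$, $\xj_{0} = \td{x}_{j-1}$), followed by collecting like terms.

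Concretely, I would first invoke Lemma~\ref{lem:dual} with $u = x^{*}$. Since $x^{*}$ is deterministic (hence independent of $\Ij$ and of the inner indices $i_{0}, i_{1}, \ldots$), the hypothesis of the lemma is satisfied, yielding
\[
2\etaj\Bj\E\la \nabla f(\td{x}_{j}), \td{x}_{j} - x^{*}\ra \le \E\|\td{x}_{j-1} - x^{*}\|^{2} - \E\|\td{x}_{j} - x^{*}\|^{2} + 2\etaj\Bj\sqrt{\E\|\ej\|^{2}}\sqrt{\E\|\td{x}_{j} - \td{x}_{j-1}\|^{2}} + \etaj^{2}\Bj \E\|\nuj_{\Nj}\|^{2}.
\]

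Next I would bound the last term. Corollary~\ref{cor:nuknorm2} gives a pointwise (conditional on past randomness, including $\Nj$) upper bound on $\E_{i_{k}}\|\nuj_{k}\|^{2}$ at any $k$. Taking full expectations (over the inner random index $i_{\Nj}$, over $\Nj \sim \mathrm{Geom}(\Bj/(\Bj+1))$, over $\Ij$, and over all earlier epochs), and using the identifications $\xj_{\Nj} = \td{x}_{j}$ and $\xj_{0} = \td{x}_{j-1}$, produces
\[
\E\|\nuj_{\Nj}\|^{2} \le -4L\,\E(f(\td{x}_{j}) - f(x^{*})) + 4L\,\E(f(\td{x}_{j-1}) - f(x^{*})) + 4L\,\E\la \nabla f(\td{x}_{j}), \td{x}_{j} - x^{*}\ra + 2\E\|\nabla f(\td{x}_{j})\|^{2} + 2\E\|\ej\|^{2}.
\]
Plugging this into the previous display and moving the $4\etaj^{2}L\Bj\,\E\la \nabla f(\td{x}_{j}), \td{x}_{j} - x^{*}\ra$ contribution to the left-hand side yields exactly the claim, since $2\etaj\Bj - 4\etaj^{2}L\Bj = 2\etaj\Bj(1 - 2\etaj L)$.

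There is no real obstacle here; the only mild subtlety is bookkeeping about which random variables are being averaged out. In particular, Corollary~\ref{cor:nuknorm2} as stated is a conditional-on-$i_{k}$ bound, but because it holds pointwise for every realization of $(\xj_{k}, \xj_{0})$ and every value of the outer index $\Nj$, one can take successive expectations to turn it into an unconditional bound without losing the inequality. Once that is in hand, the derivation is pure algebra.
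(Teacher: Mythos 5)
Your proposal is correct and follows the paper's own proof exactly: apply Lemma~\ref{lem:dual} with $u = x^{*}$, bound $\etaj^{2}\Bj\E\|\nuj_{\Nj}\|^{2}$ via the relaxed second-moment bound of Corollary~\ref{cor:nuknorm2} evaluated at $k=\Nj$ (the paper cites Lemma~\ref{lem:nuknorm} but in fact uses the $2\E\|\nabla f(\td{x}_{j})\|^{2} + 2\E\|\ej\|^{2}$ form), and move the $4\etaj^{2}L\Bj\,\E\la\nabla f(\td{x}_{j}),\td{x}_{j}-x^{*}\ra$ term to the left to produce the factor $2\etaj\Bj(1-2\etaj L)$. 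No gaps.
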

\begin{proof}
  Let $u = x^{*}$. By Lemma \ref{lem:nuknorm} and Lemma \ref{lem:dual}, we have
\begin{align}
&2\etaj\Bj\E\la \nabla f(\td{x}_{j}), \td{x}_{j} - x^{*}\ra \le\E\|\td{x}_{j-1} - x^{*}\|^{2} - \E\|\td{x}_{j} - x^{*}\|^{2} + 2\etaj\Bj\sqrt{\E\|\ej\|^{2}}\cdot\sqrt{\E\|\td{x}_{j} - \td{x}_{j-1}\|^{2}}\nonumber\\
& + \etaj^{2}\Bj\bigg( -4L \E (f(\td{x}_{j}) - f(x^{*})) + 4L \E (f(\td{x}_{j-1}) - f(x^{*})) + 4L\E \la\nabla f(\td{x}_{j}), \td{x}_{j} - x^{*}\ra\\
& \qquad \qquad  + 2 \E \|\nabla f(\td{x}_{j})\|^{2} + 2\E \|\ej\|^{2}\bigg).\label{eq:dual_zero_0}
\end{align}  
\end{proof}

To bound the term $\E \|\td{x}_{j} - \td{x}_{j-1}\|^{2}$, we let $u = \td{x}_{j-1}$ in Lemma \ref{lem:dual}. 
\begin{corollary}\label{cor:dual_first}
\begin{align*}
\E \|\td{x}_{j} - \td{x}_{j-1}\|^{2}&\le 4\etaj\Bj\lb 1 + 2\etaj L\rb\E\lb f(\td{x}_{j-1}) - f(x^{*})\rb - 4\etaj \Bj\lb 1 + 2\etaj L\rb \E (f(\td{x}_{j}) - f(x^{*})) \\
& + 8\etaj^{2}L \Bj \E \la\nabla f(\td{x}_{j}), \td{x}_{j} - x^{*}\ra +  4 \etaj^{2}\Bj \E \|\nabla f(\td{x}_{j})\|^{2} + 4\etaj^{2}\Bj^{2}\lb 1 + \frac{1}{\Bj}\rb\E\|\ej\|^{2}.
\end{align*}
\end{corollary}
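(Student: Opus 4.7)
The plan is to specialize Lemma \ref{lem:dual} to $u = \td{x}_{j-1}$, which causes the quadratic difference $\E\|\td{x}_{j-1} - u\|^{2} - \E\|\td{x}_{j} - u\|^{2}$ to collapse to $-\E\|\td{x}_{j} - \td{x}_{j-1}\|^{2}$ and moves this term to the left-hand side. This yields a bound of the form
\[
\E\|\td{x}_{j} - \td{x}_{j-1}\|^{2} \le -2\etaj\Bj\E\la\nabla f(\td{x}_{j}), \td{x}_{j} - \td{x}_{j-1}\ra + 2\etaj\Bj\sqrt{\E\|\ej\|^{2}}\cdot\sqrt{\E\|\td{x}_{j} - \td{x}_{j-1}\|^{2}} + \etaj^{2}\Bj\E\|\nuj_{\Nj}\|^{2}.
\]

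Next I would convert the inner-product term using the convexity of $f$, which gives $-\la \nabla f(\td{x}_{j}), \td{x}_{j} - \td{x}_{j-1}\ra \le f(\td{x}_{j-1}) - f(\td{x}_{j})$, and bound $\E\|\nuj_{\Nj}\|^{2}$ using Corollary \ref{cor:nuknorm2} with $k = \Nj$, $\xj_{k} = \td{x}_{j}$, $\xj_{0} = \td{x}_{j-1}$. This produces, after writing $f(\td{x}_{j-1}) - f(\td{x}_{j}) = [f(\td{x}_{j-1}) - f(x^{*})] - [f(\td{x}_{j}) - f(x^{*})]$, all the ``nice'' terms appearing on the right-hand side of the desired conclusion, at the cost of an extra $2\etaj^{2}\Bj\E\|\ej\|^{2}$ contribution from the $\|\nabla f(\td{x}_{j}) + \ej\|^{2}$ split.

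The one nontrivial obstacle is the square-root term $2\etaj\Bj\sqrt{\E\|\ej\|^{2}}\sqrt{\E\|\td{x}_{j} - \td{x}_{j-1}\|^{2}}$, which involves $\E\|\td{x}_{j} - \td{x}_{j-1}\|^{2}$ itself and so cannot be absorbed directly. Setting $X = \sqrt{\E\|\td{x}_{j} - \td{x}_{j-1}\|^{2}}$, the overall inequality has the quadratic form $X^{2} \le bX + c$ with $b = 2\etaj\Bj\sqrt{\E\|\ej\|^{2}}$ and $c$ equal to the remaining terms, so Lemma \ref{lem:quad} with $a = 1$ yields $X^{2} \le b^{2} + 2c = 4\etaj^{2}\Bj^{2}\E\|\ej\|^{2} + 2c$. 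The factor $2$ on $c$ is what produces the coefficient $4\etaj\Bj(1 + 2\etaj L)$ on the function-value difference and the factor $4\etaj^{2}\Bj$ on $\|\nabla f(\td{x}_{j})\|^{2}$ and $\la \nabla f(\td{x}_{j}), \td{x}_{j} - x^{*}\ra$ terms that appear in the statement.

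Finally, I would collect the coefficient on $\E\|\ej\|^{2}$: from Lemma \ref{lem:quad} we pick up $4\etaj^{2}\Bj^{2}$, and from the $2 \cdot \etaj^{2}\Bj\E\|\nuj_{\Nj}\|^{2}$ expansion we pick up an additional $4\etaj^{2}\Bj$, totaling $4\etaj^{2}\Bj^{2}(1 + 1/\Bj)$, which matches the stated form. The remainder of the calculation is routine bookkeeping; the only conceptual step is the application of Lemma \ref{lem:quad} to remove the square root induced by the bias term $\ej$.
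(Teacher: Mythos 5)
Your proposal is correct and follows essentially the same route as the paper: set $u = \td{x}_{j-1}$ in Lemma \ref{lem:dual}, use convexity for the inner product, Corollary \ref{cor:nuknorm2} for $\E\|\nuj_{\Nj}\|^{2}$, and Lemma \ref{lem:quad} with $a=1$ to absorb the square-root bias term, which doubles the remaining coefficients and yields exactly the stated constants (including the $4\etaj^{2}\Bj^{2}(1+1/\Bj)$ factor on $\E\|\ej\|^{2}$). The only difference from the paper is the order in which you substitute the bounds versus apply the quadratic lemma, which is immaterial.
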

\begin{proof}
  Let $u = \td{x}_{j-1}$, then it is independent of $\Ij$ and $i_{0}, i_{1}, \ldots$. By Lemma \ref{lem:dual}, we have
\[\E \|\td{x}_{j} - \td{x}_{j-1}\|^{2}\le 2\etaj\Bj\sqrt{\E\|\ej\|^{2}}\cdot\sqrt{\E\|\td{x}_{j} - \td{x}_{j-1}\|^{2}} - 2\etaj\Bj\E \la\nabla f(\td{x}_{j}), \td{x}_{j} - \td{x}_{j-1}\ra + \etaj^{2}\Bj\E \|\nuj_{\Nj}\|^{2}.\]
By Lemma \ref{lem:quad}, we have
\[\E \|\td{x}_{j} - \td{x}_{j-1}\|^{2}\le - 4\etaj\Bj\E \la\nabla f(\td{x}_{j}), \td{x}_{j} - \td{x}_{j-1}\ra + 2\etaj^{2}\Bj\E \|\nuj_{\Nj}\|^{2} + 4\etaj^{2}\Bj^{2}\E\|\ej\|^{2}.\]
Using the convexity of $f$, we have
\[\la\nabla f(\td{x}_{j}), \td{x}_{j} - \td{x}_{j-1}\ra\ge f(\td{x}_{j}) - f(\td{x}_{j-1}) = (f(\td{x}_{j}) - f(x^{*})) - (f(\td{x}_{j-1}) - f(x^{*})).\]
By Corollary \ref{cor:nuknorm2}, 
\begin{align*}
\E \|\nuj_{\Nj}\|^{2}& \le -4L\E (f(\td{x}_{j}) - f(x^{*})) + 4L\E (f(\td{x}_{j-1}) - f(x^{*})) + 4L\E \la \nabla f(\td{x}_{j}), \td{x}_{j} - x^{*}\ra\\
& \quad + 2\E \|\nabla f(\td{x}_{j})\|^{2} + 2\E\|\ej\|^{2}.
\end{align*}
Therefore, we prove the result.
\end{proof}

The last term to bound is $\E \|\ej\|^{2}$. This is a simple consequence of Lemma \ref{lem:var_sampling}.
\begin{lemma}\label{lem:ej}
\[\E \|\ej\|^{2}\le \frac{4L\cdot I(\Bj < n)}{\Bj}\E (f(\td{x}_{j-1}) - f(x^{*})) + \frac{2\H \cdot I(\Bj < n)}{\Bj}.\]  
\end{lemma}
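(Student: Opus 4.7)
The plan is to combine the sampling-without-replacement variance bound (Lemma \ref{lem:var_sampling}) with the co-coercivity estimate (Lemma \ref{lem:cocoercive}), in order to split $\E\|\ej\|^2$ into a piece controlled by the sub-optimality at $\xj_0 = \td{x}_{j-1}$ and a piece controlled by $\H$.

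First, I would write $\ej = \frac{1}{B}\sum_{i\in\Ij}z_i$ with $z_i := \nabla f_i(\xj_0) - \nabla f(\xj_0)$. Because $\nabla f = \frac{1}{n}\sum_{i=1}^n \nabla f_i$, the population $\{z_1,\ldots,z_n\}$ sums to zero, and $\xj_0$ is independent of $\Ij$. Applying Lemma \ref{lem:var_sampling} conditionally on $\xj_0$ (with $M = n$, $m = B$) gives
\[
\E_{\Ij}\|\ej\|^{2} \;\le\; \frac{I(B<n)}{B}\cdot\frac{1}{n}\sum_{i=1}^{n}\|\nabla f_i(\xj_0) - \nabla f(\xj_0)\|^{2}.
\]

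Next, I would use that centering reduces the second moment, so
\[
\frac{1}{n}\sum_{i=1}^{n}\|\nabla f_i(\xj_0) - \nabla f(\xj_0)\|^{2} \;\le\; \frac{1}{n}\sum_{i=1}^{n}\|\nabla f_i(\xj_0)\|^{2},
\]
and then apply $\|a+b\|^2\le 2\|a\|^2 + 2\|b\|^2$ with the decomposition $\nabla f_i(\xj_0) = [\nabla f_i(\xj_0) - \nabla f_i(x^{*})] + \nabla f_i(x^{*})$. Lemma \ref{lem:cocoercive} with $g = f_i$, $x = \xj_0$, $y = x^{*}$ bounds the first piece by $2L\bigl(f_i(\xj_0) - f_i(x^{*}) - \langle\nabla f_i(x^{*}), \xj_0 - x^{*}\rangle\bigr)$; averaging over $i$ the linear cross-term vanishes because $\frac{1}{n}\sum_i \nabla f_i(x^{*}) = \nabla f(x^{*}) = 0$, leaving $2L\bigl(f(\xj_0) - f(x^{*})\bigr)$. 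For the second piece, the definition \eqref{eq:Hf} gives $\frac{1}{n}\sum_i \|\nabla f_i(x^{*})\|^2 = \H$ at our chosen $x^{*}$.

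Combining these estimates produces
\[
\frac{1}{n}\sum_{i=1}^{n}\|\nabla f_i(\xj_0) - \nabla f(\xj_0)\|^{2} \;\le\; 4L\bigl(f(\xj_0) - f(x^{*})\bigr) + 2\H,
\]
and substituting back into the Lemma \ref{lem:var_sampling} bound and taking the outer expectation over $\xj_0 = \td{x}_{j-1}$ yields the claim. There is no real obstacle; the only delicate point is to center the $\nabla f_i(\xj_0)$ around $\nabla f_i(x^{*})$ (rather than, say, around $\nabla f(\xj_0)$ directly), since this is exactly the decomposition that allows co-coercivity to produce the $(f(\td{x}_{j-1})-f(x^{*}))$ term cleanly while leaving the $\H$ term as the residual.
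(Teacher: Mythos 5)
Your proof is correct and uses the same ingredients as the paper's: Lemma \ref{lem:var_sampling} for the without-replacement variance, the inequality $\|a+b\|^{2}\le 2\|a\|^{2}+2\|b\|^{2}$, and co-coercivity (Lemma \ref{lem:cocoercive}) with the cross-term killed by $\frac{1}{n}\sum_{i}\nabla f_{i}(x^{*})=0$. The only (immaterial) difference is the order of operations: the paper first splits $\ej$ into the two sample averages $\ej-\nabla f_{\Ij}(x^{*})$ and $\nabla f_{\Ij}(x^{*})$ and applies the sampling lemma to each, whereas you apply the sampling lemma once to $\ej$ itself and then split the resulting population second moment; both yield identical constants.
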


\begin{proof}
Using the fact that $\|z + w\|^{2} \le 2\|z\|^{2} + 2\|w\|^{2}$, we have
\begin{align*}
&  \E \|\ej\|^{2}  \le 2\E \|\ej - \nabla f_{\I_{j}}(x^{*})\| + 2\E \|\nabla f_{\I_{j}}(x^{*})\|^{2}\\
& = 2\E \left\|\frac{1}{B}\sum_{i\in \Ij}(\nabla f_{i}(\td{x}_{j-1}) - \nabla f_{i}(x^{*})) - (\nabla f(\td{x}_{j-1}) - \nabla f(x^{*}))\right\|^{2} + 2\E \left\|\frac{1}{\Bj}\sum_{i\in \Ij}\nabla f_{i}(x^{*})\right\|^{2}.
\end{align*}
Noticing that $\nabla f(x^{*}) = 0$, by Lemma \ref{lem:var_sampling}, 
\begin{align*}
&\E \left\|\frac{1}{B}\sum_{i\in \Ij}(\nabla f_{i}(\td{x}_{j-1}) - \nabla f_{i}(x^{*})) - (\nabla f(\td{x}_{j-1}) - \nabla f(x^{*}))\right\|^{2}\\
& \le \frac{I(\Bj < n)}{\Bj}\cdot \frac{1}{n}\sum_{i=1}^{n}\|\nabla f_{i}(\td{x}_{j-1}) - \nabla f_{i}(x^{*}) - (\nabla f(\td{x}_{j-1})) - \nabla f(x^{*})\|^{2}\\
& = \frac{I(\Bj < n)}{\Bj}\cdot \lb\frac{1}{n}\sum_{i=1}^{n}\|\nabla f_{i}(\td{x}_{j-1}) - \nabla f_{i}(x^{*})\|^{2} - \|\nabla f(\td{x}_{j-1}) - \nabla f(x^{*})\|^{2}\rb\\
& \le \frac{I(\Bj < n)}{\Bj}\cdot \frac{1}{n}\sum_{i=1}^{n}\|\nabla f_{i}(\td{x}_{j-1}) - \nabla f_{i}(x^{*})\|^{2}\\
& \le \frac{I(\Bj < n)}{\Bj}\cdot \frac{2L}{n}\sum_{i=1}^{n}\lb f_{i}(\td{x}_{j-1}) - f_{i}(x^{*})\rb\\
& = \frac{I(\Bj < n)}{\Bj}\cdot 2L(f(\td{x}_{j - 1}) - f(x^{*})).
\end{align*}
On the other hand, by Lemma \ref{lem:var_sampling} again, we obtain that 
\[\E \left\|\frac{1}{\Bj}\sum_{i\in \Ij}\nabla f_{i}(x^{*})\right\|^{2}\le \frac{\H\cdot I(\Bj < n)}{\Bj}.\]
Putting the pieces together we prove the result.
\end{proof}

Putting all the pieces together, we can derive the key inequality on the performance of SCSG within a single epoch.
\begin{theorem}\label{thm:dual_second}
Suppose $\etaj L \le \frac{1}{8}$ and $\Bj \ge 8$. Then under the assumption \textbf{A}1 and \textbf{A}2,
\begin{align*}
&\quad \lb 1 + \mu\etaj\Bj (1 - 8\etaj L)\rb\E\|\td{x}_{j} - x^{*}\|^{2}+ 4\etaj \Bj \E (f(\td{x}_{j}) - f(x^{*}))  \\  
& \le  \E\|\td{x}_{j-1} - x^{*}\|^{2} + 2\etaj\Bj\lb 1+ 4\etaj L + 9\etaj L \cdot  I(\Bj < n)\rb \E (f(\td{x}_{j-1}) - f(x^{*})) + 9\etaj^{2}\Bj \cdot \H\cdot I(\Bj < n).
\end{align*}
  
\end{theorem}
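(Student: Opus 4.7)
The plan is to take Corollary \ref{cor:dual_zero} as the base inequality and absorb the two bias-related extra terms—namely $2\etaj\Bj\sqrt{\E\|\ej\|^{2}}\sqrt{\E\|\td{x}_{j}-\td{x}_{j-1}\|^{2}}$ and $2\etaj^{2}\Bj\E\|\ej\|^{2}$—using Corollary \ref{cor:dual_first} and Lemma \ref{lem:ej}, respectively. The entire proof is essentially bookkeeping, so the main work is picking the constants so that the final coefficients match the stated bound.

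First I would lower-bound the inner product on the LHS of Corollary \ref{cor:dual_zero} using Assumption \textbf{A}2, namely
\[
\la \nabla f(\td{x}_{j}), \td{x}_{j}-x^{*}\ra \;\ge\; \fk{j} + \tfrac{\mu}{2}\xk{j},
\]
which transforms the factor $2\etaj\Bj(1-2\etaj L)$ in front of the inner product into the $1+\mu\etaj\Bj(\cdots)\,\E\xk{j}$ contribution on the LHS of the theorem and a $2\etaj\Bj(1-2\etaj L)\E\fk{j}$ contribution that will eventually provide the $4\etaj\Bj\E\fk{j}$ on the LHS. Next I would use Lemma \ref{lem:cocoercive} with $g=f$, $x=\td{x}_{j}$, $y=x^{*}$ to get $\E\|\nabla f(\td{x}_{j})\|^{2}\le 2L\,\E\fk{j}$, turning the $2\etaj^{2}\Bj\E\|\nabla f(\td{x}_{j})\|^{2}$ term into a multiple of $\etaj^{3}L^{2}\Bj\E\fk{j}$ that can be absorbed into the $4\etaj\Bj\E\fk{j}$ on the LHS under $\etaj L\le 1/8$.

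The main obstacle is the cross term $2\etaj\Bj\sqrt{\E\|\ej\|^{2}}\sqrt{\E\|\td{x}_{j}-\td{x}_{j-1}\|^{2}}$. I would use Young's inequality $2\sqrt{ab}\le \alpha a+b/\alpha$ with an $\alpha$ chosen to be a small multiple of $\etaj\Bj$, splitting the cross term into an $O(\etaj^{2}\Bj^{2})\E\|\ej\|^{2}$ piece and an $O(1)\E\|\td{x}_{j}-\td{x}_{j-1}\|^{2}$ piece. For the second piece I would invoke Corollary \ref{cor:dual_first}, which again bounds $\E\|\td{x}_{j}-\td{x}_{j-1}\|^{2}$ in terms of $\E\Fk{f}{\td{x}_{j-1}}$, $\E\fk{j}$, $\E\la\nabla f(\td{x}_{j}),\td{x}_{j}-x^{*}\ra$, $\E\|\nabla f(\td{x}_{j})\|^{2}$ and $\E\|\ej\|^{2}$. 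Each of these terms is then handled exactly as above: strong convexity, co-coercivity, and finally Lemma \ref{lem:ej} for $\E\|\ej\|^{2}$.

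Finally, I would collect all contributions and track the coefficients. The $\E\|\ej\|^{2}$ pieces, after substitution of Lemma \ref{lem:ej}, produce both an $\etaj L\cdot I(\Bj<n)\,\E\Fk{f}{\td{x}_{j-1}}$ contribution (which combines with the existing $4\etaj^{2}L\Bj\E\Fk{f}{\td{x}_{j-1}}$ to give the $2\etaj\Bj(1+4\etaj L+9\etaj L\cdot I(\Bj<n))$ coefficient) and the noise term $9\etaj^{2}\Bj\H\cdot I(\Bj<n)$. The back-reaction from Corollary \ref{cor:dual_first} onto the inner-product and $\|\nabla f(\td{x}_{j})\|^{2}$ terms costs an extra $6\etaj L$ in the strong-convexity factor, explaining why the LHS sees $1-8\etaj L$ instead of $1-2\etaj L$. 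The assumptions $\etaj L\le 1/8$ and $\Bj\ge 8$ are exactly what is needed to make the absorption into the LHS $4\etaj\Bj\,\E\fk{j}$ term and the strong-convexity contribution positive after this rearrangement.
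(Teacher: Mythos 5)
Your proposal is correct and follows essentially the same route as the paper's proof: start from Corollary \ref{cor:dual_zero}, split the cross term $2\etaj\Bj\sqrt{\E\|\ej\|^{2}}\sqrt{\E\|\td{x}_{j}-\td{x}_{j-1}\|^{2}}$ by Young's inequality with parameter of order $\etaj\Bj$, control $\E\|\td{x}_{j}-\td{x}_{j-1}\|^{2}$ via Corollary \ref{cor:dual_first} and $\E\|\ej\|^{2}$ via Lemma \ref{lem:ej}, fold the gradient-norm term into the inner product by co-coercivity, and finish with Assumption \textbf{A}2 (your accounting of the extra $6\etaj L$ turning $1-2\etaj L$ into $1-8\etaj L$ matches the paper exactly, and $\Bj\ge 8$ absorbs the $1+1/\Bj$ factors into the constants $9$). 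The only blemishes are cosmetic: the intermediate claim $\E\|\nabla f(\td{x}_{j})\|^{2}\le 2L\,\E\fk{j}$ is not the form of co-coercivity the final bookkeeping actually uses (the paper instead bounds $\|\nabla f(\td{x}_{j})\|^{2}\le 2L(f(x^{*})-f(\td{x}_{j})+\la\nabla f(\td{x}_{j}),\td{x}_{j}-x^{*}\ra)$ so that the $f$-coefficients sum to exactly $4\etaj\Bj$), and the order of the absorbed term is $\etaj^{2}L\Bj$, not $\etaj^{3}L^{2}\Bj$.
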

\begin{proof}
By Corollary \ref{cor:dual_zero}, 
\begin{align*}
&2\etaj\Bj(1 - 2\etaj L)\E\la \nabla f(\td{x}_{j}), \td{x}_{j} - x^{*}\ra \le\E\|\td{x}_{j-1} - x^{*}\|^{2} - \E\|\td{x}_{j} - x^{*}\|^{2} + 4\etaj^{2}L\Bj \E (f(\td{x}_{j-1}) - f(x^{*})) \\
& - 4\etaj^{2}L\Bj \E (f(\td{x}_{j}) - f(x^{*})) + 2\etaj^{2}\Bj \E \|\nabla f(\td{x}_{j})\|^{2} + 2\etaj\Bj\sqrt{\E\|\ej\|^{2}}\cdot\sqrt{\E\|\td{x}_{j} - \td{x}_{j-1}\|^{2}} + 2\etaj^{2}\Bj\E \|\ej\|^{2}.
\end{align*}    

Using the fact that $2zw\le \beta^{-1} z^{2} + \beta w^{2}$ for any $\beta > 0$, we have
\begin{equation}
  \label{eq:dual_second_1}
  2\sqrt{\E\|\ej\|^{2}}\cdot\sqrt{\E\|\td{x}_{j} - \td{x}_{j-1}\|^{2}} \le 2\etaj\Bj\E \|\ej\|^{2} + \frac{1}{2\etaj\Bj} \E\|\td{x}_{j} - \td{x}_{j-1}\|^{2}.
\end{equation}
Then by Corollary \ref{cor:dual_first} and Lemma \ref{lem:ej}, we obtain that
\begin{align}
&\quad 2\etaj\Bj(1 - 2\etaj L)\E\la \nabla f(\td{x}_{j}), \td{x}_{j} - x^{*}\ra\nonumber\\
&\le\E\|\td{x}_{j-1} - x^{*}\|^{2} - \E\|\td{x}_{j} - x^{*}\|^{2} + 4\etaj^{2}L\Bj \E (f(\td{x}_{j-1}) - f(x^{*})) - 4\etaj^{2}L\Bj \E (f(\td{x}_{j}) - f(x^{*}))\nonumber\\
&\quad  + 2\etaj^{2}\Bj^{2}\lb 1 + \frac{1}{\Bj}\rb\E \|\ej\|^{2} + 2\etaj^{2}\Bj \E \|\nabla f(\td{x}_{j})\|^{2} + \frac{1}{2} \E \|\td{x}_{j} - \td{x}_{j-1}\|^{2}\nonumber\\
&\le\E\|\td{x}_{j-1} - x^{*}\|^{2} - \E\|\td{x}_{j} - x^{*}\|^{2} + 4\etaj^{2}L\Bj \E (f(\td{x}_{j-1}) - f(x^{*})) - 4\etaj^{2}L\Bj \E (f(\td{x}_{j}) - f(x^{*}))\nonumber\\
& \quad + 2\etaj^{2}\Bj^{2}\lb 1 + \frac{1}{\Bj}\rb\E \|\ej\|^{2} + 2\etaj^{2}\Bj \E \|\nabla f(\td{x}_{j})\|^{2}\nonumber\\
& \quad  + 2\etaj \Bj (1 + 2\etaj L)\E (f(\td{x}_{j-1}) - f(x^{*})) - 2\etaj \Bj (1 + 2\etaj L) \E (f(\td{x}_{j}) - f(x^{*})) + 4\etaj^{2}L\Bj \E \la\nabla f(\td{x}_{j}), \td{x}_{j} - x^{*}\ra \nonumber\\
& \quad + 2 \etaj^{2}\Bj \E \|\nabla f(\td{x}_{j})\|^{2} + 2\etaj^{2}\Bj^{2}\lb 1 + \frac{1}{\Bj}\rb\E \|\ej\|^{2}\quad \mbox{(Corollary \ref{cor:dual_first})}\nonumber\\
& =  \E\|\td{x}_{j-1} - x^{*}\|^{2} - \E\|\td{x}_{j} - x^{*}\|^{2} + 2\etaj\Bj\lb 1+ 4\etaj L\rb \E (f(\td{x}_{j-1}) - f(x^{*}))- 2\etaj \Bj (1 + 4\eta L)\E (f(\td{x}_{j}) - f(x^{*}))\nonumber\\
& \quad  + 4\etaj^{2}\Bj^{2}\lb 1 + \frac{1}{\Bj}\rb\E \|\ej\|^{2} + 4\etaj^{2}L\Bj \E \la\nabla f(\td{x}_{j}), \td{x}_{j} - x^{*}\ra + 4\etaj^{2}\Bj \E \|\nabla f(\td{x}_{j})\|^{2} \nonumber\\
& =  \E\|\td{x}_{j-1} - x^{*}\|^{2} - \E\|\td{x}_{j} - x^{*}\|^{2} + 2\etaj\Bj\lb 1+ 4\etaj L\rb \E (f(\td{x}_{j-1}) - f(x^{*}))- 2\etaj \Bj (1 + 4\etaj L)\E (f(\td{x}_{j}) - f(x^{*}))\nonumber\\
& \quad  + 4\etaj^{2}\Bj\lb 1 + \frac{1}{\Bj}\rb \lb 4L \E (f(\td{x}_{j-1}) - f(x^{*})) + 2\H\rb\cdot I(\Bj < n) + 4\etaj^{2}L\Bj \E \la\nabla f(\td{x}_{j}), \td{x}_{j} - x^{*}\ra \nonumber\\
& \quad + 4\etaj^{2}\Bj \E \|\nabla f(\td{x}_{j})\|^{2}\qquad \mbox{(Lemma \ref{lem:ej})}\nonumber\\
& =  \E\|\td{x}_{j-1} - x^{*}\|^{2} - \E\|\td{x}_{j} - x^{*}\|^{2} + 2\etaj\Bj\lb 1+ 4\etaj L + 8\etaj L \lb 1 + \frac{1}{\Bj}\rb I(\Bj < n)\rb \E (f(\td{x}_{j-1}) - f(x^{*}))\nonumber\\
& \quad - 2\etaj \Bj (1 + 4\etaj L)\E (f(\td{x}_{j}) - f(x^{*}))   + 4\etaj^{2}L\Bj \E \la\nabla f(\td{x}_{j}), \td{x}_{j} - x^{*}\ra + 4\etaj^{2}\Bj \E \|\nabla f(\td{x}_{j})\|^{2}\nonumber\\
& \quad + 8\etaj^{2}\Bj\lb 1 + \frac{1}{\Bj}\rb \cdot \H\cdot I(\Bj < n) \label{eq:dual_second_2}
\end{align}  
The above equation \eqref{eq:dual_second_2} implies that 
\begin{align}
&\quad 2\etaj\Bj(1 - 4\etaj L)\E\la \nabla f(\td{x}_{j}), \td{x}_{j} - x^{*}\ra + \E\|\td{x}_{j} - x^{*}\|^{2}+ 2\etaj \Bj (1 + 4\etaj L)\E (f(\td{x}_{j}) - f(x^{*}))  \nonumber\\  
& \le  \E\|\td{x}_{j-1} - x^{*}\|^{2} + 2\etaj\Bj\lb 1+ 4\etaj L + 8\etaj L \lb 1 + \frac{1}{\Bj}\rb I(\Bj < n)\rb \E (f(\td{x}_{j-1}) - f(x^{*}))\nonumber\\
& \quad + 4\etaj^{2}\Bj\E \|\nabla f(\td{x}_{j})\|^{2} + 8\etaj^{2}\Bj\lb 1 + \frac{1}{\Bj}\rb \cdot \H\cdot I(\Bj < n)\label{eq:dual_second_3}
\end{align}
By Lemma \ref{lem:cocoercive} with $g = f, x = x^{*}, y = \td{x}_{j}$, 
\[\|\nabla f(\td{x}_{j})\|^{2}\le 2L\lb f(x^{*}) - f(\td{x}_{j}) + \la \nabla f(\td{x}_{j}), \td{x}_{j} - x^{*}\ra\rb.\]
This together with \eqref{eq:dual_second_3} imply that 
\begin{align}
&\quad 2\etaj\Bj(1 - 8\etaj L)\E\la \nabla f(\td{x}_{j}), \td{x}_{j} - x^{*}\ra + \E\|\td{x}_{j} - x^{*}\|^{2}+ 2\etaj \Bj(1 + 8\etaj L) \E (f(\td{x}_{j}) - f(x^{*}))  \nonumber\\  
& \le  \E\|\td{x}_{j-1} - x^{*}\|^{2} + 2\etaj\Bj\lb 1+ 4\etaj L + 8\etaj L \lb 1 + \frac{1}{\Bj}\rb I(\Bj < n)\rb \E (f(\td{x}_{j-1}) - f(x^{*}))\nonumber\\
& \quad  + 8\etaj^{2}\Bj\lb 1 + \frac{1}{\Bj}\rb \cdot \H\cdot I(\Bj < n).\label{eq:dual_second_4}  
\end{align}
Since $\etaj L \le \frac{1}{8}$, the assumption \textbf{A}2 implies that 
\[2\etaj\Bj(1 - 8\etaj L)\E\la \nabla f(\td{x}_{j}), \td{x}_{j} - x^{*}\ra \ge 2\etaj\Bj(1 - 8\etaj L) \lb\E (f(\td{x}_{j}) - f(x^{*})) + \frac{\mu}{2}\E\|\td{x}_{j} - x^{*}\|^{2}\rb.\]
Therefore,
\begin{align}
&\quad \lb 1 + \mu\etaj\Bj (1 - 8\etaj L)\rb\E\|\td{x}_{j} - x^{*}\|^{2}+ 4\etaj \Bj \E (f(\td{x}_{j}) - f(x^{*}))  \nonumber\\  
& \le  \E\|\td{x}_{j-1} - x^{*}\|^{2} + 2\etaj\Bj\lb 1+ 4\etaj L + 8\etaj L \lb 1 + \frac{1}{\Bj}\rb I(\Bj < n)\rb \E (f(\td{x}_{j-1}) - f(x^{*}))\nonumber\\
& \quad  + 8\etaj^{2}\Bj\lb 1 + \frac{1}{\Bj}\rb \cdot \H\cdot I(\Bj < n).\label{eq:dual_second_5}  
\end{align}
Finally, since $\Bj \ge 8$, we conclude that 
\begin{align*}
&\quad \lb 1 + \mu\etaj\Bj (1 - 8\etaj L)\rb\E\|\td{x}_{j} - x^{*}\|^{2}+ 4\etaj \Bj \E (f(\td{x}_{j}) - f(x^{*}))  \\  
& \le  \E\|\td{x}_{j-1} - x^{*}\|^{2} + 2\etaj\Bj\lb 1+ 4\etaj L + 9\etaj L \cdot  I(\Bj < n)\rb \E (f(\td{x}_{j-1}) - f(x^{*})) + 9\etaj^{2}\Bj \cdot \H\cdot I(\Bj < n).
\end{align*}

\end{proof}

\begin{proof}[\textbf{of Lemma \ref{lem:geom_finite}}]
  We prove the first claim by induction. When $j = 0$, the claim is obvious. Suppose we prove the claim for $j - 1$, i.e. 
\[\E\|\xj_{0} - x^{*}\|_{2}^{2} = \E \|\td{x}_{j-1} - x^{*}\|_{2}^{2} < \infty.\]
Let $\yj_{k}$ be another sequence constructed as follows:
\[\yj_{0} = \xj_{0}, \quad \yj_{k} = \yj_{k-1} - \etaj\zetaj_{k}, \quad \mbox{where }\zetaj = \nabla f_{i_{k-1}}(\yj_{k-1}) - \nabla f_{i_{k-1}}(\yj_{0}) + \nabla f(\yj_{0}).\]
In other words, $\yj_{k}$ is a hypothetical sequence of iterates produced by SVRG initialized at $\xj_{0}$ and updated using the same sequence of random subsets. Let $\mathrm{Id}$ denote the identity mapping. Then 
\[\xj_{k} - \yj_{k} = \lb\mathrm{Id} - \etaj \nabla f_{i_{k-1}}\rb(\xj_{k-1}) - \lb\mathrm{Id} - \etaj \nabla f_{i_{k-1}}\rb(\yj_{k-1}) - \etaj \ej.\]
where we use the fact that $\mu_{j} = \nabla f(\xj_{0}) + \ej = \nabla f(\yj_{0}) + \ej$. Since $f_{i_{k-1}}$ is $L$-smooth and convex and $\etaj \le 1 / 2L$, it is well known that $\mathrm{Id} - \etaj \nabla f_{i_{k-1}}$ is a non-expansive operator. Thus,
\begin{align*}
  \|\xj_{k} - \yj_{k}\|_{2}&\le \|\lb\mathrm{Id} - \etaj \nabla f_{i_{k-1}}\rb(\xj_{k-1}) - \lb\mathrm{Id} - \etaj \nabla f_{i_{k-1}}\rb(\yj_{k-1})\|_{2} + \etaj \|\ej\|_{2}\\
& \le \|\xj_{k-1} - \yj_{k-1}\|_{2} + \etaj\|\ej\|_{2}.
\end{align*}
As a result,
\begin{equation}
  \label{eq:xjkyjk}
  \|\xj_{k} - \yj_{k}\|_{2}\le \|\xj_{0} - \yj_{0}\|_{2} + \etaj k \|\ej\|_{2} = \etaj k \|\ej\|_{2}.
\end{equation}
On the other hand, \cite{SVRG} showed in the proof of their Theorem 1 that
\begin{align}
\lefteqn{2\etaj(1 - 2\etaj L) \E (f(\yj_{k}) - f(x^{*})) + \E \|\yj_{k+1} - x^{*}\|_{2}^{2}}\nonumber \\
& \le 4\eta^{2}L \E (f(\yj_{0}) - f(x^{*})) + \E \|\yj_{k} - x^{*}\|_{2}^{2}.\nonumber
\end{align}
Since $f(\yj_{k}) - f(x^{*})\ge 0$ and $1 - 2\etaj L\ge 0$, we have
\[\E \|\yj_{k+1} - x^{*}\|_{2}^{2} \le 4\eta^{2}L \E (f(\yj_{0}) - f(x^{*})) + \E \|\yj_{k} - x^{*}\|_{2}^{2}.\]
As a result, 
\begin{align}
  &\E \|\yj_{k} - x^{*}\|_{2}^{2}\le 4k\eta^{2}L \E (f(\yj_{0}) - f(x^{*})) + \E\|\yj_{0} - x^{*}\|_{2}^{2}\nonumber\\
&\le (4k\etaj^{2} L^{2} + 1)\E\|\yj_{0} - x^{*}\|_{2}^{2} \le (k + 1)\E\|\xj_{0} - x^{*}\|_{2}^{2}.  \label{eq:yjkxstar}
\end{align}
Putting \eqref{eq:xjkyjk} and \eqref{eq:yjkxstar} together, and using the fact that $\|a + b\|_{2}^{2}\le 2\|a\|_{2}^{2} + 2\|b\|_{2}^{2}$, we obtain that
\begin{align}
  \E\|\xj_{k} - x^{*}\|_{2}^{2} &\le 2 \E \|\xj_{k} - \yj_{k}\|_{2}^{2} + 2 \E \|\yj_{k} - x^{*}\|_{2}^{2}\nonumber\\
& \le k^{2}\lb \etaj^{2} \E \|\ej\|_{2}^{2} + \E\|\xj_{0} - x^{*}\|_{2}^{2}\rb.\label{eq:xjkxstar}
\end{align}
By Lemma \ref{lem:ej}, 
\begin{equation}
  \label{eq:ejfinite_L2}
  \E \|\ej\|_{2}^{2}\le 4L\E (f(\xj_{0}) - f(x^{*})) + 2\H\le 2L^{2} \E\|\xj_{0} - x^{*}\|_{2}^{2} + 2\H.
\end{equation}
By \eqref{eq:xjkxstar},
\[\E\|\xj_{k} - x^{*}\|_{2}^{2}\le 2k^{2}\lb \E\|\xj_{0} - x^{*}\|_{2}^{2} + \etaj^{2}\H\rb.\]
By the induction hypothesis,
\[\E\|\td{x}_{j} - x^{*}\|_{2}^{2} \le 2\E \Nj^{2} \lb \E\|\xj_{0} - x^{*}\|_{2}^{2} + \etaj^{2}\H\rb < \infty,\]
and
\[\E \|\ej\|_{2}^{2} < \infty.\]
By Corollary \ref{cor:nuknorm2}, 
\begin{align*}
  \E\|\nuj_{k}\|_{2}^{2} &\le 4L(f(x^{*}) - f(\xj_{k}) - \la\nabla f(\xj_{k}), x^{*} - \xj_{k}\ra) + 4L \lb f(\xj_{0}) - f(x^{*})\rb\\
& \qquad  + 2 \E \|\nabla f(\xj_{k})\|^{2} + 2\E\|\ej\|_{2}^{2}\\
& \le 2L^{2}\|\xj_{k} - x^{*}\|_{2}^{2} + 2L^{2}\|\xj_{0} - x^{*}\|_{2}^{2} + 2 \E \|\nabla f(\xj_{k})\|^{2} + 2\E\|\ej\|_{2}^{2}\\
& \le 4L^{2}\|\xj_{k} - x^{*}\|_{2}^{2} + 2L^{2}\|\xj_{0} - x^{*}\|_{2}^{2} + 2\E\|\ej\|_{2}^{2}\\
\end{align*}
Then the first claim yields
\[\E \|\nuj_{\Nj}\|_{2}^{2}\le 4L^{2} \E \|\td{x}_{j} - x^{*}\|_{2}^{2} + 2L^{2} \E \|\td{x}_{j-1} - x^{*}\|_{2}^{2} + 2\E\|\ej\|_{2}^{2} < \infty.\]
Finally, 
\begin{align*}
  &\E |\la \ej, \td{x}_{j} - \td{x}_{j-1}\ra|\le \frac{1}{2}\E \|\ej\|_{2}^{2} + \frac{1}{2}\E \|\td{x}_{j} - \td{x}_{j-1}\|_{2}^{2} \\
& \le \frac{1}{2}\E \|\ej\|_{2}^{2} + \E \|\td{x}_{j} - x^{*}\|_{2}^{2} + \E\|\td{x}_{j-1} - x^{*}\|_{2}^{2} < \infty.
\end{align*}
\end{proof}

\section{Analysis of R-SVRG}
Throughout the rest of appendices, we will denote $T(\eps)$ and $T_{x}(\eps)$ by
\begin{equation}
  \label{eq:Teps}
  T(\eps) = \min\{T': \E (f(\td{x}_{j}) - f(x^{*}))\le \eps, \,\,\forall T \ge T'\},
\end{equation}
and 
\begin{equation}
  \label{eq:Txeps}
  T_{x}(\eps) = \min\{T': \E \|\td{x}_{j} - x^{*}\|^{2}\le \eps, \,\,\forall T \ge T'\}.
\end{equation}
Then we have
\begin{equation}
  \label{eq:comp_Teps}
  \E\comp(\eps) = 2n T(\eps), \quad \E \comp_{x}(\eps) = 2nT_{x}(\eps).
\end{equation}

~\\
\noindent Although we can directly apply Theorem \ref{thm:dual_second} with $B = n$, the constants involved in the analysis are compromised. To sharpen the constants, we derive a counterpart of Theorem \ref{thm:dual_second} for R-SVRG.
\begin{theorem}\label{thm:SVRG}
Let $B =  n$ and assume that $\eta L \le \frac{1}{3}$. Under the assumption \textbf{A}1 and \textbf{A}2, 
\begin{align*}
 & \lb 1 + \mu\etaj  n (1 - 3\etaj L)\rb\E \|\td{x}_{j} - x^{*}\|^{2} + 2\etaj  n \E (f(\td{x}_{j}) - f(x^{*}))\\
& \le \E\|\td{x}_{j-1} - x^{*}\|^{2} + 4\etaj^{2}L n  \E (f(\td{x}_{j-1}) - f(x^{*})) 
\end{align*}
\end{theorem}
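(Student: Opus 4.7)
The plan is to re-run the one-epoch analysis of Appendix B in the special case $B = n$, where the bias $\ej = \nabla f_{\Ij}(\xj_0) - \nabla f(\xj_0)$ vanishes identically, and keep sharper track of constants than in the general-$B$ Theorem \ref{thm:dual_second}. Concretely, when $\ej = 0$ the estimator $\nuj_k$ is unbiased ($\E_{i_k}\nuj_k = \nabla f(\xj_k)$), so Lemma \ref{lem:dual} with $u = x^{*}$ simplifies: the cross term $2\etaj\Bj\sqrt{\E\|\ej\|^2}\sqrt{\E\|\td{x}_j - \td{x}_{j-1}\|^2}$ disappears and we get the clean bound
\[
2\etaj n\,\E\la\nabla f(\td{x}_j),\td{x}_j - x^{*}\ra \le \E\|\td{x}_{j-1}-x^{*}\|^2 - \E\|\td{x}_j-x^{*}\|^2 + \etaj^2 n\,\E\|\nuj_{N_j}\|^2.
\]
This lets us avoid the loose inequality $\|a+b\|^2 \le 2\|a\|^2 + 2\|b\|^2$ that Corollary \ref{cor:nuknorm2} uses to split off $\|\ej\|^2$, so the coefficient in front of $\|\nabla f(\td{x}_j)\|^2$ stays at $1$ instead of blowing up to $2$, which is what ultimately buys us $1-3\eta L$ instead of $1-8\eta L$.

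Next I would plug Lemma \ref{lem:nuknorm} (again with $\ej = 0$) into the display above to obtain
\[
2\etaj n(1 - 2\etaj L)\E\la\nabla f(\td{x}_j),\td{x}_j - x^{*}\ra + \E\|\td{x}_j - x^{*}\|^2 + 4\etaj^2 L n\,\E(f(\td{x}_j) - f(x^{*}))
\]
\[
\le \E\|\td{x}_{j-1} - x^{*}\|^2 + 4\etaj^2 L n\,\E(f(\td{x}_{j-1}) - f(x^{*})) + \etaj^2 n\,\E\|\nabla f(\td{x}_j)\|^2.
\]
Then I would kill the stray $\etaj^2 n\,\E\|\nabla f(\td{x}_j)\|^2$ on the right with the co-coercivity bound from Lemma \ref{lem:cocoercive} applied to $f$ at $x = x^{*}, y = \td{x}_j$, namely $\|\nabla f(\td{x}_j)\|^2 \le 2L(\la\nabla f(\td{x}_j),\td{x}_j - x^{*}\ra - (f(\td{x}_j) - f(x^{*})))$. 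Substituting and moving the resulting inner-product contribution to the left bumps the coefficient from $2\etaj n(1-2\etaj L)$ to $2\etaj n(1-3\etaj L)$, which is where the bound $\eta L \le 1/3$ enters to ensure nonnegativity.

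Finally I would use strong convexity (\textbf{A}2) in the form $\la\nabla f(\td{x}_j),\td{x}_j-x^{*}\ra \ge (f(\td{x}_j)-f(x^{*})) + \tfrac{\mu}{2}\|\td{x}_j-x^{*}\|^2$ to convert the remaining inner product into the two quantities we want on the left. The algebraic miracle that makes the theorem clean is that the coefficient on $\E(f(\td{x}_j)-f(x^{*}))$ becomes $2\etaj n(1-3\etaj L) + 6\etaj^2 L n = 2\etaj n$ exactly, while the $\|\td{x}_j - x^{*}\|^2$ term picks up the factor $1 + \mu\etaj n(1-3\etaj L)$ verbatim.

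The only real obstacle is bookkeeping: one must avoid the lossy splits in Corollary \ref{cor:nuknorm2} and Corollary \ref{cor:dual_first} (both used for the general $B<n$ case to absorb the $\ej$ contribution), since these cost constant factors that are not recoverable afterwards. Everything else is substitution; no new estimates beyond Lemmas \ref{lem:cocoercive}, \ref{lem:nuknorm} and \ref{lem:dual} are needed.
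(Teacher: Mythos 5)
Your proposal is correct and follows essentially the same route as the paper's proof of Theorem \ref{thm:SVRG}: with $B=n$ the bias $e_j$ vanishes, Lemma \ref{lem:dual} (at $u=x^*$) combined with Lemma \ref{lem:nuknorm} gives the clean inequality with coefficient $1$ on $\E\|\nabla f(\td{x}_j)\|^2$, co-coercivity of $f$ absorbs that term and turns $1-2\eta L$ into $1-3\eta L$, and strong convexity plus the exact cancellation $2\eta n(1-3\eta L)+6\eta^2 Ln=2\eta n$ yields the stated bound. Nothing is missing.
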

\begin{proof}
  In this case, $\ej \equiv 0$. By Lemma \ref{lem:dual} with $u = x^{*}$ and Lemma \ref{lem:nuknorm}, 
  \begin{align*}
&2\etaj n \E\la \nabla f(\td{x}_{j}), \td{x}_{j} - x^{*} \ra \le\E\|\td{x}_{j-1} - x^{*}\|^{2} - \E\|\td{x}_{j} - x^{*}\|^{2}\\
+ &\etaj^{2} n \lb -4L \E(f(\td{x}_{j}) - f(x^{*})) + 4L \E (f(\td{x}_{j-1}) - f(x^{*})) + 4L \E \la \nabla f(\td{x}_{j}), \td{x}_{j} - x^{*}\ra + \E \|\nabla f(\td{x}_{j})\|^{2}\rb.
  \end{align*}
By Lemma \ref{lem:cocoercive} with $g = f, x = x^{*}, y = \td{x}_{j}$, 
\[\|\nabla f(\td{x}_{j})\|^{2}\le 2L\lb f(x^{*}) - f(\td{x}_{j}) + \la\nabla f(\td{x}_{j}), \td{x}_{j} -x^{*}\ra\rb.\]
Thus, 
  \begin{align*}
&2\etaj n \E\la \nabla f(\td{x}_{j}), \td{x}_{j} - x^{*} \ra \le\E\|\td{x}_{j-1} - x^{*}\|^{2} - \E\|\td{x}_{j} - x^{*}\|^{2}\\
+ &\etaj^{2} n \lb -6L \E(f(\td{x}_{j}) - f(x^{*})) + 4L \E (f(\td{x}_{j-1}) - f(x^{*})) + 6L \E \la \nabla f(\td{x}_{j}), \td{x}_{j} - x^{*}\ra \rb.
\end{align*}
Rearranging the terms we obtain that 
\begin{align*}
&2\etaj n (1 - 3\etaj L)\E\la \nabla f(\td{x}_{j}), \td{x}_{j} - x^{*} \ra \le\E\|\td{x}_{j-1} - x^{*}\|^{2} - \E\|\td{x}_{j} - x^{*}\|^{2}\\
& \quad +\etaj^{2} n \lb -6L \E(f(\td{x}_{j}) - f(x^{*})) + 4L \E (f(\td{x}_{j-1}) - f(x^{*}))\rb.
  \end{align*}
Since $\etaj L \le \frac{1}{3}$, the assumption \textbf{A}2 implies that 
\[2\etaj n (1 - 3\etaj L)\E\la \nabla f(\td{x}_{j}), \td{x}_{j} - x^{*} \ra\ge 2\etaj n (1 - 3\etaj L)\lb \E (f(\td{x}_{j}) - f(x^{*})) + \frac{\mu}{2}\|\td{x}_{j} - x^{*}\|^{2}\rb.\]
This entails that 
\begin{align*}
 & \lb 1 + \mu\etaj  n (1 - 3\etaj L)\rb\E \|\td{x}_{j} - x^{*}\|^{2} + 2\etaj  n \E (f(\td{x}_{j}) - f(x^{*}))\\
& \le \E\|\td{x}_{j-1} - x^{*}\|^{2} + 4\etaj^{2}L n  \E (f(\td{x}_{j-1}) - f(x^{*})) 
\end{align*}
\end{proof}

Based on Theorem \ref{thm:SVRG}, we can prove the results in Section \ref{sec:RSVRG}. 

\begin{proof}[\textbf{Theorem \ref{thm:SVRG_bound}}]
\begin{enumerate}[(1)]
 \item  By Theorem \ref{thm:SVRG}, we have
\[2\etaj n \E (f(\td{x}_{j}) - f(x^{*}))\le 4\etaj^{2}L n  \E (f(\td{x}_{j-1}) - f(x^{*})) + \E \|\td{x}_{j-1} - x^{*}\|^{2} - \E \|\td{x}_{j} - x^{*}\|^{2}.\]
Summing the above inequality for $j = 1, \ldots, T$, we have
\[2\etaj  n  (1 - 2\etaj L)\sum_{j=1}^{T}\E (f(\td{x}_{j}) - f(x^{*})) \le 4\etaj^{2}L  n  \Delta_{f} + \Delta_{x}.\]
By convexity, 
\[\sum_{j=1}^{T}\E (f(\td{x}_{j}) - f(x^{*}))\ge T\lb\E (f(\bar{x}_{T}) - f(x^{*}))\rb.\]
Therefore,
\[\E (f(\bar{x}_{T}) - f(x^{*}))\le \frac{1}{T}\cdot\frac{4\etaj^{2}L  n  \Delta_{f} + \Delta_{x}}{2\etaj n (1 - 2\etaj L)}.\]
\item Let $Q_{j} = (1 + \mu\etaj n (1 - 3\etaj L))\E \|\td{x}_{j} - x^{*}\|^{2} + 2\etaj n\E (f(\td{x}_{j}) - f(x^{*}))$, then by Theorem \ref{thm:SVRG},
\[Q_{j}\le \max\left\{2\etaj L, \frac{1}{1 + \mu\etaj n(1 - 3\etaj L)}\right\}\cdot Q_{j-1} = \lambda Q_{j-1}.\]
This implies that 
\[Q_{T}\le \lambda^{T}Q_{0}.\]
The result is then proved by noticing that 
\[Q_{T}\ge \E \|\td{x}_{T} - x^{*}\|^{2} + 2\etaj n \E (f(\bar{x}_{T}) - f(x^{*}))\]
and 
\[Q_{0} \le \E \|\td{x}_{0} - x^{*}\|^{2} + 2\etaj n (2 - 3\etaj L)\E (f(\td{x}_{0}) - f(x^{*}))\le \E \|\td{x}_{0} - x^{*}\|^{2} + 4\etaj n \E (f(\td{x}_{0}) - f(x^{*})).\]
\end{enumerate}
\end{proof}

\begin{proof}[\textbf{Corollary \ref{cor:SVRG_complexity}}]
In the non-strongly convex case, by part (1) of Theorem \ref{thm:SVRG_bound},
\[\E (f(\bar{x}_{T}) - f(x^{*}))\le \frac{1}{T}\cdot\frac{4(\etaj L)^{2}  n  \Delta_{f} + L\Delta_{x}}{2(\etaj L) n (1 - 2\etaj L)} = O\lb\frac{\Delta_{f}}{T} + \frac{L\Delta_{x}}{Tn}\rb.\]
Recalling the definitions of $T(\eps)$ and $T_{x}(\eps)$ in \eqref{eq:Teps} and \eqref{eq:Txeps}, this implies that 
\[T(\eps) = O\lb \frac{\Delta_{f}}{\eps} + \frac{L\Delta_{x}}{n\eps}\rb,\]
and hence
\[\E \comp(\eps) = O\lb n T(\eps)\rb = O\lb \frac{n\Delta_{f} + L\Delta_{x}}{\eps}\rb.\]
In the strongly convex case, by part (2) of Theorem \ref{thm:SVRG_bound},
\[\E \|\td{x}_{j} - x^{*}\|^{2} + 2\etaj n \E(f(\td{x}_{j}) - f(x^{*}))\le  \lambda^{T}Q_{0}.\]
This implies that 
\[T(\eps) = O\lb \log \frac{LQ_{0}}{n\eps} \bigg/ \log \frac{1}{\lambda}\rb, \mbox{ and }T_{x}(\eps) = O\lb \log \frac{Q_{0}}{\eps} \bigg/ \log \frac{1}{\lambda}\rb\]
By definition,
\[\log \frac{1}{\lambda} = \log \lb \lb 1 + \frac{n\theta (1 - 3\theta)}{\kappa}\rb \wedge \frac{3}{2}\rb.\]
This implies that 
\[\lb\log \frac{1}{\lambda}\rb^{-1} = O\lb \frac{\kappa}{n} + 1\rb.\]
Note that $LQ_{0} = L\Delta_{x} + 2\theta n\Delta_{f}$, we obtain that
\begin{align*}
&T(\eps) = O\lb \lb\frac{\kappa}{n} + 1\rb\log \lb\frac{n\Delta_{f} + L\Delta_{x}}{n\eps}\rb \rb\\
\Longrightarrow &\E \comp(\eps) = O\lb n T(\eps)\rb = O\lb (n + \kappa) \log \lb\frac{n\Delta_{f} + L\Delta_{x}}{n\eps}\rb\rb.
\end{align*}
Similarly, 
\begin{align*}
&T_{x}(\eps) = O\lb \lb\frac{\kappa}{n} + 1\rb\log \lb\frac{n\Delta_{f} + L\Delta_{x}}{L\eps}\rb \rb\\
\Longrightarrow &\E \comp_{x}(\eps) = O\lb n T_{x}(\eps)\rb = O\lb (n + \kappa) \log \lb\frac{n\Delta_{f} + L\Delta_{x}}{L\eps}\rb\rb.
\end{align*}

\end{proof}

\begin{proof}[\textbf{Corollary \ref{cor:SVRG_complexity_ns}}]
By part (1) of Theorem \ref{thm:SVRG_bound}, we have
\[\E (f(\bar{x}_{T}) - f(x^{*}))\le \frac{1}{T}\cdot\frac{4(\etaj L)^{2}  n  \Delta_{f} + L\Delta_{x}}{2(\etaj L) n (1 - 2\etaj L)} = \frac{1}{T}\cdot \frac{4\theta^{2} \Delta_{f} + L\Delta_{x}}{\sqrt{n} \cdot 2\theta (1 - 2\theta)} = O\lb\frac{\Delta_{f} + L\Delta_{x}}{T \sqrt{n}}\rb.\]
The assumption \textbf{A}1, 
\[\Delta_{f}\le \frac{L}{2}\Delta_{x}.\]
Thus, 
\[\E (f(\bar{x}_{T}) - f(x^{*})) = O\lb\frac{L\Delta_{x}}{T \sqrt{n}}\rb.\]
This implies that 
\[T(\eps) = O\lb\left\lceil \frac{L\Delta_{x}}{\sqrt{n}\eps}\right\rceil\rb = O\lb 1 + \frac{L\Delta_{x}}{\sqrt{n}\eps}\rb,\]
and hence
\[\E \comp(\eps) = O(nT(\eps)) = O\lb n + \frac{\sqrt{n}L\Delta_{x}}{\eps}\rb.\]
\end{proof}

\section{Analysis of SCSG}

\subsection{Convergence Analysis for Non-Strongly Convex Objectives}

\begin{proof}[\textbf{Theorem \ref{thm:SCSG_bound}}]
By Theorem \ref{thm:dual_second}, we have
   \begin{align*}
&4\etaj B \E (f(\td{x}_{j}) - f(x^{*}))\\
\le &2\etaj B(1 + 13\etaj L) \E (f(\td{x}_{j-1}) - f(x^{*})) + \E \|\td{x}_{j-1} - x^{*}\|^{2} - \E \|\td{x}_{j} - x^{*}\|^{2} + 9\etaj^{2} B \H \cdot I(B < n).
\end{align*}
Telescoping the above inequality for $j = 1, \ldots, T$, we have
\begin{align*}
2\etaj  B  (1 - 13\etaj L)\sum_{j=1}^{T}\E (f(\td{x}_{j}) - f(x^{*})) &\le 2\etaj B(1 + 13\etaj L ) \Delta_{f} + \Delta_{x} + 9\etaj^{2} B T \H \cdot I(B < n)\\
&\le 4\etaj B \Delta_{f} + \Delta_{x} + 9\etaj^{2} B T \H \cdot I(B < n),
\end{align*}
where the last inequality uses $13\etaj L\le 1$. By convexity, 
\[\sum_{j=1}^{T}\E (f(\td{x}_{j}) - f(x^{*}))\ge T\lb\E (f(\bar{x}_{T}) - f(x^{*}))\rb.\]
Therefore,
\[\E (f(\bar{x}_{T}) - f(x^{*}))\le \frac{1}{T}\cdot\frac{4\etaj B \Delta_{f} + \Delta_{x}}{2\etaj B (1 - 13\etaj L)} + \frac{9\etaj\H\cdot I(B < n)}{2(1 - 13\etaj L)}.\]
\end{proof}

Before proving the results in Section \ref{sec:ana_SCSG}, we derive the computation complexity for arbitrary batch size $B$ with an appropriately scaled stepsize $\eta$ in the non-strongly convex case.

\begin{corollary}\label{cor:SCSG_complexity_general}
Assume \textbf{A}1 holds. Set  $\etaj = \frac{\alpha}{L }$ with
\[\frac{\alpha}{1 - 13\alpha} \cdot \frac{9\H\cdot I(B < n)}{2L}< \eps, \] 
then with the output $\bar{x}_{T}$,
\[\E \comp(\eps) = O\lb \frac{B\Delta_{f}}{\eps} + \frac{L\Delta_{x}}{\eps \alpha}\rb.\]
\end{corollary}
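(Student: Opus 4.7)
The plan is to obtain this result as a direct consequence of Theorem \ref{thm:SCSG_bound}. Substituting $\etaj = \alpha / L$ into the bound from Theorem \ref{thm:SCSG_bound} and simplifying, I would write
\[
\E (f(\bar{x}_T) - f(x^*)) \le \frac{1}{T}\cdot \frac{4\alpha B \Delta_f + L \Delta_x}{2\alpha B (1 - 13\alpha)} + \frac{\alpha}{1 - 13\alpha}\cdot \frac{9 \H \cdot I(B < n)}{2L}.
\]
The hypothesis of the corollary forces the second (constant-in-$T$) term to be at most $\eps$. Up to a factor of $2$ that is absorbed into the $O$-notation (one may as well run the algorithm with the stronger requirement that this term is at most $\eps/2$, which only changes $\alpha$ by a constant), it then suffices to choose $T$ large enough that the first term is also of order $\eps$.

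Solving the first-term inequality for $T$, I would obtain
\[
T(\eps) = O\lb \frac{4\alpha B \Delta_f + L \Delta_x}{\alpha B \eps}\rb = O\lb \frac{\Delta_f}{\eps} + \frac{L \Delta_x}{\alpha B \eps}\rb,
\]
using the fact that $1/(1 - 13\alpha)$ is a positive absolute constant (the corollary's hypothesis implicitly forces $\alpha < 1/13$, else the fraction is nonpositive and the stated inequality is vacuous or impossible).

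Next I would account for the per-epoch cost. Each epoch of SCSG spends $B$ IFO calls on the mini-batch gradient at line~3 and then a random $N_j \sim \mathrm{Geom}(B/(B+1))$ SGD-type updates. By the expectation formula \eqref{eq:exp_N}, $\E N_j = (B/(B+1))/(1/(B+1)) = B$, so the expected per-epoch cost is $B + \E N_j = 2B$. Multiplying this by $T(\eps)$ yields
\[
\E \comp(\eps) = 2B \cdot T(\eps) = O\lb \frac{B \Delta_f}{\eps} + \frac{L \Delta_x}{\alpha \eps}\rb,
\]
which is the claimed bound.

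There is no real obstacle here: the argument is essentially bookkeeping on top of Theorem \ref{thm:SCSG_bound}. The only delicate point is the interpretation of the strict-inequality hypothesis ``$< \eps$'' on the bias term; the cleanest way to handle this in an $O$-notation statement is to require that this term is bounded by $\eps/2$ (equivalently, tighten $\alpha$ by a constant factor), after which the first term can carry the remaining $\eps/2$ of sub-optimality and the complexity bound follows unchanged.
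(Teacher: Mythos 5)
Your proposal is correct and follows essentially the same route as the paper: substitute $\etaj = \alpha/L$ into Theorem \ref{thm:SCSG_bound}, use the hypothesis to bound the constant-in-$T$ bias term, choose $T = O(\Delta_f/\eps + L\Delta_x/(\alpha B \eps))$, and multiply by the expected per-epoch cost $O(B)$. The paper handles the ``$<\eps$'' hypothesis by writing the bias term as $\phi\eps$ with $\phi<1$ and allocating $(1-\phi)\eps$ to the variance term, which is the same bookkeeping as your $\eps/2$ split.
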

\begin{proof}
Let 
\[\phi = \frac{\alpha}{1 - 13\alpha}\cdot \frac{9\H\cdot I(B < n)}{2L\eps}.\]
By part (1) of Theorem \ref{thm:SCSG_bound},  
\begin{equation}\label{eq:SCSG_complexity}
\E (f(\bar{x}_{T}) - f(x^{*}))\le \frac{1}{T}\cdot\frac{4\etaj B \Delta_{f} + \Delta_{x}}{2\etaj B (1 - 13\etaj L)} + \frac{9\etaj\H \cdot I(\Bj < n)}{2(1 - 13\etaj L)}\triangleq \frac{1}{T}D_{1} + D_{2}
\end{equation}
Under these conditions, $D_{2}$ is bounded by
\[D_{2} = \frac{\alpha}{1 - 13\alpha}\cdot \frac{9\H \cdot I(B < n)}{2L} = \phi \eps < \eps.\]
Let 
\[\td{T}(\eps) = \frac{D_{1}}{(1 - \phi)\eps}\]
then for any $T \ge \td{T}(\eps)$, 
\[\E (f(\bar{x}_{T}) - f(x^{*}))\le (1 - \phi)\eps + \phi\eps = \eps.\]
This implies that 
\[T(\eps)\le \td{T}(\eps) = O\lb\frac{D_{1}}{\eps}\rb.\]
By definition, 
\[D_{1} = O\lb \Delta_{f} + \frac{\Delta_{x}}{\etaj B}\rb = O\lb \Delta_{f} + \frac{L\Delta_{x}}{\alpha B}\rb.\]
Therefore,
\[\E \comp(\eps) = O\lb B \td{T}(\eps)\rb = O\lb\frac{BD_{1}}{\eps}\rb = O\lb \frac{B\Delta_{f}}{\eps} + \frac{L\Delta_{x}}{\eps \alpha}\rb.\]
\end{proof}

\begin{proof}[\textbf{Corollary \ref{cor:SCSG_complexity}}]
Let $\alpha = \theta / B$. Then 
\[\frac{\alpha}{1 - 13\alpha} \cdot \frac{9\H\cdot I(B < n)}{2L}= \frac{\theta}{1 - 13\theta / B}\cdot \frac{9\H \cdot I(B < n)}{2L B}\le \frac{\theta}{1 - 13\theta / B}\cdot \frac{9\eps}{2\gamma} < \eps.\]
By Corollary \ref{cor:SCSG_complexity_general}, 
\[\E \comp(\eps) = O\lb \frac{B\Delta_{f}}{\eps} + \frac{L\Delta_{x}}{\eps}\cdot \frac{B}{\theta}\rb = O \lb \frac{BL\Delta_{x}}{\eps}\rb,\]
where the last equality uses the fact that $\Delta_{f}\le \frac{L\Delta_{x}}{2}$ and $\theta = \Theta(1)$. As a consequence, 
\[\E \comp(\eps) = O\lb \frac{\H \Delta_{x}}{\eps^{2}}\wedge \frac{nL\Delta_{x}}{\eps}\rb\]
\end{proof}

\subsection{Convergence Analysis for Strongly Convex Objectives}
\begin{proof}[\textbf{Theorem \ref{thm:SCSG_bound_sc}}]
For convenience, let 
\[\Delta_{e} = 9\etaj^{2}\H \cdot I(B < n), \quad \xi = \mu\etaj B, \quad \alpha = L\etaj\]
By Theorem \ref{thm:dual_second}, we have
\begin{align*}
&(1 + \xi(1 - 8\alpha))\E \|\td{x}_{j} - x^{*}\|^{2} + \frac{4\xi}{\mu}\E (f(\td{x}_{j}) - f(x^{*}))\\
\le &\E \|\td{x}_{j-1} - x^{*}\|^{2} + \frac{2(1 + 13\alpha)\xi}{\mu}\E (f(\td{x}_{j-1}) - f(x^{*})) + \Delta_{e}.
\end{align*}
The assumption \textbf{A}2 implies that $f(\td{x}_{j}) - f(x^{*})\ge \frac{\mu}{2}\|\td{x}_{j} - x^{*}\|^{2}$. Thus,
\begin{align}
&(1 + \xi)\E \|\td{x}_{j} - x^{*}\|^{2} + \frac{2\lb 2 - 8\alpha\rb\xi}{\mu} \E (f(\td{x}_{j}) - f(x^{*}))\nonumber\\
\le &(1 + \xi(1 - 8\alpha))\E \|\td{x}_{j} - x^{*}\|^{2} + \frac{4\xi}{\mu}\E (f(\td{x}_{j}) - f(x^{*}))\nonumber\\
\le & \E \|\td{x}_{j-1} - x^{*}\|^{2} + \frac{2(1 + 13\alpha)\xi}{\mu}\E (f(\td{x}_{j-1}) - f(x^{*})) + \Delta_{e}.\label{eq:SCSG_bound_sc_1}
\end{align}
On the other hand, 
\begin{align}
  \frac{2\lb 2 - 8\alpha\rb\xi}{\mu} & = \frac{2\lb(1 + 13\alpha)(1 + \xi) + \phi\rb\xi}{\mu}\ge \frac{2 (1 + 13\alpha)(1 + \xi)\xi}{\mu}.  \label{eq:SCSG_bound_sc_2}
\end{align}
Putting \eqref{eq:SCSG_bound_sc_1} and \eqref{eq:SCSG_bound_sc_2}, we obtain that 
\begin{align}
&(1 + \xi)\left\{\E \|\td{x}_{j} - x^{*}\|^{2} + \frac{2\lb 1 + 13\alpha\rb\xi}{\mu} \E (f(\td{x}_{j}) - f(x^{*}))\right\}\nonumber\\
\le & \E \|\td{x}_{j-1} - x^{*}\|^{2} + \frac{2(1 + 13\alpha)\xi}{\mu}\E (f(\td{x}_{j-1}) - f(x^{*})) + \Delta_{e}.\label{eq:SCSG_bound_sc_3}
\end{align}
Multiplying both sides of \eqref{eq:SCSG_bound_sc_1} by $(1 + \xi)^{j-1}$ and summing over $j = T, T - 1, \ldots, 1$, we obtain that 
\begin{align}
&(1 + \xi)^{T}\left\{\E \|\td{x}_{T} - x^{*}\|^{2} + 2\etaj B \E (f(\td{x}_{T}) - f(x^{*}))\right\} \nonumber\\ 
\le &(1 + \xi)^{T}\left\{\E \|\td{x}_{T} - x^{*}\|^{2} + \frac{2\lb 1 + 13\alpha\rb\xi}{\mu} \E (f(\td{x}_{T}) - f(x^{*}))\right\} \nonumber\\ 
\le & \Delta_{x} + \frac{2\lb 1 + 13\alpha\rb\xi}{\mu}\Delta_{f}+ \Delta_{e}\sum_{j=1}^{T}(1 + \xi)^{j-1}.\nonumber\\
\le &\frac{2}{\mu}\Delta_{f} + \frac{2(1 - 21\alpha - \phi)}{\mu}\Delta_{f} + \Delta_{e}\frac{(1 + \xi)^{T}}{\xi}\nonumber\\
\le & \frac{4}{\mu}\Delta_{f} + \Delta_{e}\frac{(1 + \xi)^{T}}{\xi}.\label{eq:SCSG_bound_sc_4}
\end{align}
\end{proof}

\begin{proof}[\textbf{Corollary \ref{cor:SCSG_complexity_comp_last}}]
By definition, $B \ge \gamma \kappa$, and
  \[\mu\etaj B \le \theta, \quad \etaj L \le \frac{\theta\kappa}{B} \le \frac{\theta}{\gamma}.\]
As a result,
\[\phi \ge 2 - \frac{8\theta}{\gamma} - \lb 1 + \frac{13\theta}{\gamma}\rb(1 + \theta) > 2 - \frac{8}{22} - \lb1 + \frac{13}{22}\rb\lb1 + \frac{2}{9}\cdot \frac{1}{22}\rb> 0.029 > 0.\]
By \eqref{eq:SCSG_sc_F} in Theorem \ref{thm:SCSG_bound_sc}, 
\begin{equation}\label{eq:SCSG_complexity_comp_last}
\E (f(\td{x}_{T}) - f(x^{*}))\le \frac{2\Delta_{f}}{\mu\etaj B(1 + \mu\etaj B)^{T}} + \frac{9\H \cdot I(B < n)}{2\mu B}\triangleq \frac{D_{1}}{(1 + \xi)^{T}} + D_{2}.
\end{equation}
where $\xi = \mu\etaj B$. Then 
\[D_{2}\le \frac{9\eps}{2\gamma} < \eps.\]
Define $\td{T}(\eps)$ as
\[\td{T}(\eps) = \log \lb \lb 1 - \frac{9}{2\gamma}\rb\frac{2\Delta_{f}}{\eps \xi}\rb\bigg/ \log \lb1 + \xi\rb\]
then for any $T\ge \td{T}(\eps)$, 
\[\E (f(\td{x}_{T}) - f(x^{*}))\le \lb 1 - \frac{9}{2\gamma}\rb\eps + \frac{9\eps}{2\gamma} = \eps.\]
Therefore, 
\[T(\eps) \le \td{T}(\eps) = O\lb\log \lb\frac{\Delta_{f}}{\xi\eps}\rb\bigg/ (\xi\wedge 1)\rb.\]
Note that 
\[\xi = \theta\cdot \frac{B}{B \vee \gamma \kappa} \Longrightarrow  \frac{1}{\xi \wedge 1} = O\lb 1 + \frac{\kappa}{B}\rb,\]
and whenever $B < n$, $B\ge \gamma \kappa > \kappa$. Thus, 
\[\frac{1}{\xi \wedge 1} = O\lb 1 + \frac{\kappa}{n}\rb = O\lb \frac{n\vee \kappa}{n}\rb\]
Therefore,
\[\E \comp(\eps) = O\lb BT(\eps)\rb = O\lb\lb B + \kappa\rb\log \lb\frac{\Delta_{f}}{\eps}\cdot\frac{n\vee \kappa}{n}\rb\rb\] 
\[= O\lb \lb\frac{\H}{\mu \eps}\wedge n +  \kappa\rb\log \lb\frac{\Delta_{f}}{\eps}\cdot\frac{n\vee \kappa}{n}\rb\rb.\]

\end{proof}

\begin{proof}[\textbf{Corollary \ref{cor:SCSG_complexity_comp_last_iter}}]
Using the same argument as in the proof of Corollary \ref{cor:SCSG_complexity_comp_last}, $\phi > 0$.
By \eqref{eq:SCSG_sc_X} in Theorem \ref{thm:SCSG_bound_sc}, 
\begin{equation}\label{eq:SCSG_complexity_comp_last}
\E \|\td{x}_{T} - x^{*}\|^{2}\le \frac{2\Delta_{f}}{\mu (1 + \mu\etaj B)^{T}} + \frac{9\etaj\H \cdot I(B < n)}{\mu}\triangleq \frac{D_{1}}{(1 + \xi)^{T}} + D_{2}.
\end{equation}
where $\xi = \mu\etaj B$. Then 
\[D_{2}\le \frac{9\theta}{\mu^{2}B}\le  \frac{9\theta\eps}{\gamma} < \eps.\]
Define $\td{T}(\eps)$ as
\[\td{T}(\eps) = \log \lb \lb 1 - \frac{9\theta}{\gamma}\rb\frac{2\Delta_{f}}{\mu\eps}\rb\bigg/ \log \lb1 + \xi\rb\]
then for any $T\ge \td{T}(\eps)$, 
\[\E (f(\td{x}_{T}) - f(x^{*}))\le \lb 1 - \frac{9}{2\gamma}\rb\eps + \frac{9\eps}{2\gamma} = \eps.\]
Therefore, 
\[T(\eps) \le \td{T}(\eps) = O\lb\log \lb\frac{\Delta_{f}}{\mu\eps}\rb\bigg/ (\xi\wedge 1)\rb.\]
Similar to the proof of Corollary \ref{cor:SCSG_complexity_comp_last}, 
\[\E \comp(\eps) = O\lb \lb\frac{\H}{\mu^{2} \eps}\wedge n +  \kappa\rb\log \lb\frac{\Delta_{f}}{\mu\eps}\rb\rb.\]

\end{proof}

\section{Proof of Results in Section \ref{sec:Hf}}\label{app:section4}

\begin{proof}[\textbf{Proposition \ref{prop:general_H}}]
By Lemma \ref{lem:cocoercive} in Appendix \ref{app:lemmas} 
\[f_{i}(x) - f_{i}(x^{*})\ge \la \nabla f_{i}(x^{*}), x - x^{*}\ra + \frac{1}{2L}\|\nabla f_{i}(x^{*}) - \nabla f_{i}(x)\|^{2}.\]
Averaging the above inequality for all $i$ results in
\begin{equation}\label{eq:temp_coco}
f(x) \ge f(x^{*}) + \frac{1}{2nL}\sum_{i=1}^{n}\|\nabla f_{i}(x^{*}) - \nabla f_{i}(x)\|^{2}.
\end{equation}
Noticing that for any $a, b\in \R^{p}$, 
\[\|a - b\|^{2} = \|a\|^{2} + \|b\|^{2} - 2 \la a, b\ra = \frac{1}{2}\|a\|^{2} - \|b\|^{2} + \frac{1}{2}\|a - 2b\|^{2} \ge \frac{1}{2}\|a\|^{2} - \|b\|^{2},\]
we obtain that 
\[\frac{1}{4n}\sum_{i=1}^{n}\|\nabla f_{i}(x^{*})\|^{2} - \frac{1}{2n}\sum_{i=1}^{n}\|\nabla f_{i}(x)\|^{2}\le \frac{1}{2n}\sum_{i=1}^{n}\|\nabla f_{i}(x^{*}) - \nabla f_{i}(x)\|^{2}\le L(f(x) - f(x^{*})).\]
\end{proof}

\begin{proof}[\textbf{Proposition \ref{prop:pairwise}}]
In this case, the RHS of \eqref{eq:general_f} is a form of order-two V-statistics (\cite{vandervaart}) which can be written as 
\[\H \le \frac{2}{m^{2}}\sum_{j,k=1}^{n}U_{jk}.\]
it holds that 
\[\E \left\{\frac{2}{m^{2}}\sum_{j,k=1}^{m}U_{jk}\right\} = O(1),\]
 and 
\begin{align*}
& \Var \left\{\frac{2}{m^{2}}\sum_{j,k=1}^{m}U_{jk}\right\}   = O\lb \frac{1}{m^{4}}\sum_{j,k,l=1}^{m}\Cov(U_{jk}, U_{kl})\rb\\
& = O\lb \frac{1}{m^{4}}\sum_{j,k,l=1}^{m}\sqrt{\Var(U_{jk})\Var(U_{kl})}\rb\\
& = O\lb \frac{1}{m^{4}}\rb = O\lb\frac{1}{m^{4}}\cdot m^{3}\rb = O\lb\frac{1}{m}\rb = o(1).
\end{align*}
The above inequalities imply that $\H = O_{p}(1)$ in this case. The conclusion can be easily extended to the case where $f(x)$ can be written as a higher-order V-statistics. 
\end{proof}

\begin{proposition}
Denote by $x$ the concatenation of $x_{1}, \ldots, x_{K - 1}$ as in Section 5. For multi-class logistic regression loss 
  \[\rho(y_{i}; a_{i}^{T}x) = \log\lb 1 + \sum_{k=1}^{K - 1}e^{a_{i}^{T}x_{k}}\rb - \sum_{k=1}^{K - 1}I(y_{i} = k)a_{i}^{T}x_{k},\]
it holds that 
\[M_{1}\triangleq \sup_{x}\rho_{2}(y_{i};a_{i}^{T}x)^{2}\le 2.\]
\end{proposition}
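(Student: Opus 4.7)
The plan is to interpret $\rho_2$ as the gradient of $\rho$ with respect to the $(K-1)$-dimensional vector $w = (a_i^T x_1, \ldots, a_i^T x_{K-1})$ (since for multi-class logistic regression $\rho$ takes such a vector as its second argument), and then bound $\|\rho_2\|^2$ uniformly by $2$ using a direct computation.

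First, I would introduce the softmax probabilities
\[ p_l = \frac{e^{w_l}}{1 + \sum_{k=1}^{K-1} e^{w_k}}, \quad l = 1, \ldots, K-1, \qquad p_0 = 1 - \sum_{l=1}^{K-1} p_l, \]
which satisfy $p_l \in [0,1]$ and $\sum_{l=0}^{K-1} p_l = 1$. A direct differentiation of the log-sum-exp term gives
\[ \frac{\partial \rho}{\partial w_l} = p_l - I(y_i = l), \quad l = 1, \ldots, K-1, \]
so that $\rho_2 = p - \mathbf{1}_{y_i}$, where $\mathbf{1}_{y_i}$ is the indicator vector in $\mathbb{R}^{K-1}$ (the zero vector when $y_i = 0$).

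Next I would split into two cases depending on the label. If $y_i = 0$, then $\|\rho_2\|^2 = \sum_{l=1}^{K-1} p_l^2 \le \sum_{l=1}^{K-1} p_l \le 1$ since each $p_l \in [0,1]$. If $y_i = l^\star$ for some $l^\star \in \{1, \ldots, K-1\}$, then
\[ \|\rho_2\|^2 = (1 - p_{l^\star})^2 + \sum_{l \neq l^\star} p_l^2 \le (1 - p_{l^\star})^2 + \sum_{l \neq l^\star} p_l \le (1 - p_{l^\star})^2 + (1 - p_{l^\star}), \]
where the last inequality again uses $\sum_{l \neq l^\star, \, l \geq 1} p_l \le 1 - p_{l^\star}$. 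Since $1 - p_{l^\star} \in [0,1]$, the function $t + t^2$ is bounded by $2$ on $[0,1]$, so $\|\rho_2\|^2 \le 2$.

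Combining both cases yields $M_1 = \sup_x \|\rho_2(y_i; a_i^T x)\|^2 \le 2$, uniformly in $K$. There is no real obstacle here beyond fixing the right interpretation of $\rho_2$ as the gradient in $w$ rather than as a scalar derivative; once that is settled, the proof reduces to a standard softmax computation plus an elementary bound on $t + t^2$ over $[0,1]$.
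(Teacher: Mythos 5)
Your proof is correct and follows essentially the same route as the paper: both identify $\rho_2$ as the softmax-minus-indicator vector $H_i(x) = p - \mathbf{1}_{y_i}$ and bound its squared Euclidean norm by $2$ using $p_l \in [0,1]$ and $\sum_l p_l \le 1$. The paper compresses your two-case analysis into the single chain $\|H_i(x)\|^2 \le \|H_i(x)\|_1 \le 2$ (valid since every entry lies in $[-1,1]$), but the underlying computation is identical.
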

\begin{proof}
For any $k = 1, \ldots, K - 1$,
\[\pd{f_{i}(x)}{x_{k}} = \lb\frac{e^{a_{i}^{T}x_{k}}}{1 + \sum_{k=1}^{K - 1}e^{a_{i}^{T}x_{k}}} - I(y_{i} = k)\rb\cdot a_{i}\]
and thus
\[\nabla f_{i}(x) = H_{i}(x) \otimes a_{i}\Longrightarrow \|\nabla f_{i}(x)\|^{2} = \|H_{i}(x)\|^{2}\cdot \|a_{i}\|^{2},\]
where
\[H_{i}(x) = \lb \frac{e^{a_{i}^{T}x_{1}}}{1 + \sum_{k=1}^{K - 1}e^{a_{i}^{T}x_{k}}} - I(y_{i} = 1), \ldots, \frac{e^{a_{i}^{T}x_{K - 1}}}{1 + \sum_{k=1}^{K - 1}e^{a_{i}^{T}x_{k}}} - I(y_{i} = K - 1)\rb^{T}.\]
It is easy to see that for any $i$ and $x$
\[\|H_{i}(x)\|^{2}\le \|H_{i}(x)\|_{1}\le 2.\]
This entails that 
\[ \H \le \frac{2}{n}\sum_{i=1}^{n}\|a_{i}\|^{2}.\]
  
\end{proof}

\end{document}